\newtheorem{thm}{Theorem}
\newtheorem{lem}{Lemma}
\newtheorem{prop}{Proposition}
\newtheorem{defn}{Definition}
\newtheorem{rem}{Remark}
\newtheorem{exm}{Example}
\newcommand{\g}{\mathfrak g}
\renewcommand{\ng}{\mathfrak n}
\newcommand{\ug}{\mathfrak u}
\newcommand{\sll}{\mathfrak{sl}}
\newcommand{\Hom}{\operatorname{Hom}}
\newcommand{\mg}{\mathfrak g}
\newcommand{\vf}{\varphi}
\newcommand{\gl}{\mathfrak{gl}}
\newcommand{\gr}{\operatorname{gr}}
\begin{document}
\title{On geometry of curves of flags of constant type}

\author
{Boris Doubrov
\address{Belarussian State University, Nezavisimosti Ave.~4, Minsk 220030, Belarus;
 E-mail: doubrov@islc.org}
\and Igor Zelenko
\address{Department of Mathematics, Texas A$\&$M University,
   College Station, TX 77843-3368, USA; E-mail: zelenko@math.tamu.edu}}
\subjclass[2000]{53B25, 53B15, 53A55, 17B70.}
\keywords{curves and submanifolds in flag varieties, equivalence problem, bundles of moving frames, Tanaka prolongation, graded Lie algebras, $\sll_2$-representations}

\begin{abstract}
We develop an algebraic version of Cartan method of equivalence or an analog of Tanaka prolongation for the (extrinsic) geometry of curves of flags of a vector space $W$ with respect to the action of a subgroup $G$ of the  $GL(W)$. Under some natural assumptions on the subgroup $G$ and on the flags, one can  pass from the filtered objects to the corresponding graded objects and describe the construction of canonical bundles of moving frames for these curves in the language of pure Linear Algebra. The scope of applicability of the theory includes geometry of natural classes of curves of flags with respect to
reductive linear groups or their parabolic subgroups. As simplest examples, this includes the projective and affine geometry of curves.
The case of classical groups
is considered in more detail.
\end{abstract}
\maketitle\markboth{Boris Doubrov and Igor Zelenko} {On geometry of
curves of flags of constant type}
\section{Introduction}
\setcounter{equation}{0}
\setcounter{thm}{0}
\setcounter{lem}{0}
\setcounter{prop}{0}
\setcounter{cor}{0}
\setcounter{rem}{0}
\setcounter{exm}{0}
Fix a vector space $W$ over a field $\mathbb K$, where $\mathbb K=\mathbb R$ or $\mathbb C$. Also, fix integers  $0=k_0\leq k_1\leq k_2\leq\ldots\leq k_\mu=\dim W$ and let $F_{k_1,\ldots,k_{\mu-1}}(W)$ be the manifold of all flags $0=\Lambda_0\subset \Lambda_{-1}\subset \Lambda_{-2}\subset\ldots\subset\Lambda_{-\mu}=W$, where $\Lambda_{-i}$ are $k_i$-dimensional linear subspaces.
For definiteness we also assume that $k_1>0$ and $k_{\mu-1}<k_\mu$.
We allow equalities among $k_i$, i.e. repeated subspaces in flags, because direct sums of flags will play an important role in the sequel.

Now fix a Lie subgroup $G$ of $GL(W)$.
The group $GL(W)$  acts naturally on $F_{k_1,\ldots, k_{\mu-1}}(W)$.
Assume that $\mathcal O$ is an orbit in $F_{k_1,\ldots,k_m}(W)$ with respect to the action of $G$.
The general question is whether given two \emph{unparamerized} curves in $\mathcal O$ there exists an element of $G$ sending one curve to another. Such two curves are said to be \emph{$G$-equivalent}. We are also interested in the same question for parametrized curves of flags.

Note that particular examples of this setup include the classical projective and affine geometries of curves in $\mathbb P^n$ and $A^n$ and  the projective geometry of ruled surfaces. In all these cases the action of the group $G$ is transitive on the corresponding flag varieties.
Other examples we will consider in this paper include:
\begin{enumerate}
\item  $G=GL(W)$ and $\mathcal{O}$ is the whole flag variety;
\item $G=Sp(W)$, if $W$ is equipped with a symplectic form $\sigma$,
$G=O(W)$, if $W$ is equipped with a non-degenerate symmetric form $Q$, and $\mathcal{O}$ is the isotropic/coisotropic flag variety.
\end{enumerate}

Our original motivation to study such equivalence problems
comes from the new approach, so-called \emph{symplectification procedure}, to the geometry of structures of nonholonomic nature on manifolds such as vector distributions, sub-Riemannian structure etc. This approach was proposed in \cite{agrgam1, agrachev, jac1, jac2} and it is based on the Optimal Control Theory. It consists of the reduction of the equivalence problem for such nonholonomic geometric structures
to the (extrinsic) differential geometry of curves
in Lagrangian Grassmannians and, more generally,  of curves of  flags of isotropic and coisotropic subspaces in a linear symplectic space with respect to the action of the Linear Symplectic Group (i.e. a particular case of item (2) above). The symplectification procedure was applied to the equivalence problem of vector distributions of rank 2 and 3 (\cite{doubzel1, doubzel2, doubzel3}). For rank 2 distributions curves of flags appearing in this approach are curves of complete flags consisting of all osculating subspaces of the curve of one-dimensional subspaces of these flags, i.e. one arrives to the classical Wilczynski theory of nondegenerate curves in projective spaces \cite{wil}. However,
the geometry of curves of isotropic/coisotropic flags appearing in the symplectification procedure for rank 3 distributions is more involved and needed the development of a new technique.
In \cite{doubzel3}  we treated such curves by a brute force method that cannot be extended to the curves appearing in the theory of distributions of higher rank. The theory developed here gives a conceptual way to work with all such curves.


A general
procedure for the equivalence problems under consideration was developed already by E. Cartan
with his method of moving frames (see \cite{cartan} and modern expositions, for example, by P. Griffiths \cite{griffiths} , M. Green \cite{green}, and M. Fels and P. Olver \cite{olver1}, \cite{olver2}).
In the present paper we distinguish curves of flags for which the construction of canonical bundle of moving frames with respect to the action of a given group $G$ can be done in purely algebraic way and describe this construction in the language of pure Linear algebra.

For a different type of equivalence problems such as equivalence of
filtered structures on manifolds an algebraic version of Cartan's
equivalence method was developed by N. Tanaka in \cite{tan}. Instead
of doing concrete normalizations, Tanaka describes the prolongation procedure for all possible normalizations in purely algebraic terms via so-called universal algebraic prolongation of the symbol of the filtered structure.

We develop a similar algebraic theory for unparametrized curves of flags, satisfying some natural assumptions.
The constructions and the results of the paper can be almost verbatim generalized to embedded submanifolds of flag varieties (see subsection \ref{subman}).
It is worth to notice that  an analog of Tanaka theory for curves (and submanifolds) in projective spaces and more general flag varieties  was already developed in  works of Y. Se-ashi
\cite{seashi1,seashi2}, interpreting geometrically and generalizing the classical work of Wilczynski \cite{wil} (see also \cite{doub3}).
However,  Se-ashi treated much more
restrictive class of equivalence problems compared to our present paper: first, he considers
the case $G=GL(W)$ only and second, he assumes that the algebraic
prolongation of the symbol of the curve is semi-simple. The last
assumption allows him  to associate to a curve of flags (and, more
generally, to a submanifold in a flag variety) a Cartan connection with
values in the algebraic prolongation of the symbol by analogy with
\cite{tan2}.

For the theory of curves (and, more generally, submanifolds) of flags our paper  can be related to Se-ashi works
\cite{seashi1, seashi2}
in the same way
as Tanaka paper \cite{tan} about filtered structures on manifolds with general constant symbol is related to
his later work \cite{tan2} about filtered structures with symbol having semisimple universal algebraic prolongation.

For unparametrized curves in Lagrangian Grassmannians first nontrivial invariants were constructed in the earlier works of the second author with A. Agrachev \cite{jac1, princjac}, using the notion of cross-ration of four points in Lagrangian Grassmannians. Our constructions here give a way to construct a complete system of invariants for curves of flags in much more general situation.

The present paper is closely related to our recent preprint \cite{dzpar} on geometry of curves in parabolic homogeneous space and, more generally, in a homogeneous spaces of a Lie group with a Lie algebra endowed with the fixed grading. The link between these papers is given in Remark \ref{homogenrem}.
In \cite{dzpar} we work with abstract  groups while in the present paper we work with their representations. We had to develop here a special language which is more adapted to the case of representations and to the forthcoming applications to the geometry of
distributions.

The corresponding modification of the theory in the case of \emph{parametrized} curves is given as well (see subsection \ref{param}). This modification give more conceptual point of view on constructions of papers \cite{zelrank1,zelli1,zelli2}  on parametrized curves in Lagrangian Grassmannians and extend them  and results of \cite{jac1, jac2, beffa, beffa1, beffa2, beffa3, ovs} to more general classes of curves.

Let us briefly describe the main constructions and the structure of the present paper.
As in the Tanaka theory for filtered structures on manifolds, the main idea of our approach is to pass from the filtered objects to the corresponding graded objects.
In order to make it work we need additional assumptions on the group $G$ and on the chosen orbit $\mathcal O\subset F_{k_1,\ldots k_m}$ with respect to the action of $G$. This assumptions are discussed in section \ref{comptgrsec} (see Assumption 1 there).
Shortly speaking, any flag $f_0\in\mathcal O$ induces the filtration on the Lie algebra $g$ of the Lie group G. And the compatibility of $\mathcal O$ with respect to the grading means that $g$ is isomorphic (as a filtered Lie algebra) to the associated graded Lie algebra $\gr_{f_0}g$ so that passing to the graded objects we do not change the group in the equivalence problem. Note that $\gr_{f_0}g$ can be identified with a subalgebra of $\gl(\gr_{f_0}W)$, where $\gr_{f_0} W$ is the graded space corresponding to the flag (the filtration) $f_0$. We give an explicit algorithm for constructing of all orbits compatible with respect to the grading under the assumption that $G$ is semisimple (see Proposition \ref{uniflagprop} for the irreducible case) and apply it when $G$ is a symplectic or orthogonal subgroups of $GL(W)$ (see Proposition \ref{compsymp} and Remark \ref{orthrem}, respectively).

The curves of flags under consideration are also not arbitrary but they are compatible  with respect to differentiation (see Assumption 2 in section \ref{symbsec}). Informally speaking, it means that the tangent line to a curve $\tau\mapsto\Lambda(\tau)\subset \mathcal O$ at the point $\Lambda(t)$ is a degree $-1$ element of the graded space $\gr_{\Lambda(t)} g\subset \gl(\gr_{f_0}W)$.  The condition of compatibility with respect to differentiation is natural through the refinement (osculation) procedure on curves of flags described in section \ref{refine}. For example, starting with a curve in Grassmannian, it is natural to produce the curve of flags compatible with respect to differentiation by taking iteratively the osculation subspaces of the original curve. Then the equivalence problem for curves in Grassmannian  is reduced to the geometry of curves in certain flag manifold which are compatible with respect to differentiation. In particular,  in this way geometry of so-called non-degenerate curves in projective space is reduced to geometry of curves of complete flags compatible with respect to differentiation.

Further, in section \ref{symbsec}, similarly to Tanaka theory, we define the symbol of a curve $\tau\mapsto\Lambda(\tau)\subset \mathcal O$ at a point with respect to the group $G$.
For this first we identify the space $\gr_{\Lambda(t)}W$ with a fixed ``model'' graded space $V$ such that this identification conjugate the group $G$ with the fix subgroup $\mathcal G$ of $GL(V)$. Then the tangent line to the curve $\tau\mapsto\Lambda(\tau)$ at the point $\Lambda(t)$ can be identified with a line of degree $-1$ endomorphism in the Lie algebra  $\mathfrak g$ of the Lie group $\mathcal G$. Let $\mathcal G_0$ be the the subgroup of $\mathcal G$ preserving the grading on $V$. Taking all possible  identifications of $\gr_{\Lambda(t)}W$ with $V$ as above we assign to  the tangent line to the curve $\tau\mapsto\Lambda(\tau)$ at $\Lambda(t)$  the orbit of a degree $-1$ endomorphism from $\mathfrak g$ with respect to the adjoint action of
$\mathcal G_0$.
This orbit is called the \emph{symbol of the curve $\tau\mapsto\Lambda(\tau)\subset \mathcal O$ at $\Lambda(t)$ with respect to the group $G$}.

The symbol of the curve of flags at a point is the basic invariant of the curve at this point. The main goal of the present paper is
to study the equivalence problem (w.r.t. to the group $G$) for curves of flags with a given constant symbol. Note that the condition of constancy of symbol is often not restrictive. For example, this is the case, when $G$ is semismple (or, more generally, reductive). It turns out that in this case the set of all possible symbols (for given group $G$) is finite. Therefore,
the symbol of a curve of flags with respect to  a semisimple (reductive) group  $G$  is constant in a neighborhood of a generic point.

The way to solve the equivalence problem under consideration is to associate canonically the bundle of moving frames
to any curve of flags . The main result of the paper (Theorem \ref{mainthm}) shows that in the case of curves with constant symbol the construction of such canonical bundle of moving frames can be done in purely algebraic way, namely in terms of so-called universal algebraic prolongation of the symbol.
The universal algebraic prolongation of the symbol or, more precisely, of the line of degree $-1$ endomorphisms, representing the symbol, is the largest graded subalgebra of $\mathfrak g$ such that its component
corresponding to the negative degrees coincides with this chosen line of degree $-1$ endomorphisms.
It is isomorphic  to the algebra of infinitesimal symmetries of  the so-called flat curve, which  is the simplest (the most symmetric) curve among all curves with this symbol.
In the proof of the main theorem, given in section 5,  we first fix the normalization condition by choosing a complementary subspace to the image of certain coboundary operator in the space of certain 1-cochains. The construction of the bundle of canonical moving frames for any curve with given constant symbol is imitated by  the construction of such bundle for the flat curve with this symbol.

It is important to emphasize that the number of prolongation steps and the dimension of the resulting bundle of moving frame is independent of the choice of the normalization condition but it depends on the symbol only: the number of prolongations steps is equal to the maximal degree in the grading of the universal algebraic prolongation of the symbol and the dimension of the bundle of moving frame is equal to the dimension of the universal algebraic prolongation of the symbol.  The computation of the universal algebraic prolongation is an iterative process, where on each step one needs to solve a system of linear equations. Hence \emph{even without fixing the normalization condition and starting the construction of canonical moving frames one can predict the main features of this construction using linear algebra only}.

Consequently, in order to apply our theory for equivalence of curves of flags with respect to the given group $G$  it is important to classify all possible symbols with respect to this group and to calculate their universal algebraic prolongation. We implement these two tasks in sections \ref{symbclasec} and \ref{algprolongsec}, respectively, for 
the standard representation of classical groups. The universal algebraic prolongation in this cases can be effectively describe using the theory of $\sll_2$-representations.

Our results in the case of Symplectic Group are crucial for application of so-called symplectification procedure to geometry of vector distributions: they give much more conceptual view on our constructions
in \cite{doubzel3} for rank 3 distributions and will be used in our future work on distributions of arbitrary rank. Therefore the  symplectic case is treated in detail.
The case of Orthogonal Group is very similar to the case of Symplectic Group. Hence we will only sketch this case referring to the corresponding objects in the symplectic case.

Note that the set of all possible symbols of curves of flags with respect to a group $G$ depends only on the group $G$ as an abstract Lie group and it does not depend on a particular representation of $G$. Therefore the classification of the symbols in section \ref{symbclasec} and the calculation of section \ref{algprolongsec} can be used for any representation of the classical groups.

In general the bundles of moving frames  obtained in Theorem \ref{mainthm} do not have a structure of a principle bundle. They belong to a wider class of bundles that we call \emph{quasi-principal bundles} (see Definition \ref{constquasidef}). Quasi-principle bundles have some features of the principle bundles when one passes to the grading. The question whether one can choose normalization conditions such that the resulting bundle will be a principle one is reduced to the question whether this normalization condition, as a complementary subspace to the image of certain coboundary operator in the space of certain 1-cochains, is invariant with respect to the natural action of the subgroup of the group of symmetries of the flat curve
preserving a point. In general, such invariant normalization condition may not exist. Some conditions for existence of the invariant normalization conditions and examples of symbols for which they do not exist are given in our recent preprint \cite{dzpar}.

Finally, it is important to stress that we construct canonical bundles of moving frames for curves of flags with a given symbol in a unified way, i.e. without any branching in contrast to our previous construction for curves of isotropic/coisotropic subspaces of a linear symplectic space appearing in the symplectification procedure for rank 3 distributions (\cite{doubzel3}) and also in contrast to the Fels-Olver approach \cite{olver1,olver2}. The latter was used  by G. Mari Beffa (see, for example, \cite{beffa, beffa1, beffa2, beffa3}) for geometry of parametrized curves with very particular symbols in Grassmannians of half-dimensional subspaces
with respect to classical groups
(of Lagrangian subspaces in the symplectic case and of isotropic half-dimensional subspaces in the orthogonal/conformal case).
In the terminology of section \ref{refine}
the first osculating space of such curves at any point is equal to the ambient vector space $W$. The main difference of those works from the treatment of the same curves in the present paper is that in those works not all curves with a given symbol but generic curves are considered. For example, the flat curves do not satisfy the genericity assumptions there.
\medskip

{\bf Acknowledgements} We are very grateful to  professor Pierre Deligne. The idea of treating the equivalence problem for curves of flags by passing to the graded objects stemmed from the way of presentation of the previous paper \cite{zelli1} of the second author with C.Li, which was proposed by professor Deligne during his edition of that paper. Also we would like to thank Professors Tohru Morimoto and Yoshinori Machida for very stimulating discussions.



%

\section{Compatibility of the pair $(G,\mathcal O)$ with respect to grading}
\label{comptgrsec}
\setcounter{equation}{0}
\setcounter{thm}{0}
\setcounter{lem}{0}
\setcounter{prop}{0}
\setcounter{cor}{0}
\setcounter{rem}{0}
\setcounter{exm}{0}
First let us recall some basic notions on filtered and graded vector spaces.
A point $$f_0=\{0=\Lambda_0\subset\Lambda_{-1}\subset \Lambda_{-2}\subset\ldots\subset\Lambda_{-\mu}=W\}$$ of $\mathcal O$ is a decreasing filtration of $W$. So, it induces the decreasing filtration $\{(\gl(W))_{f_0,i}\}_{i\in \mathbb Z}$ of $\gl(W)$,
\begin{equation}
\label{grgldef}
 (\gl(W))_{f_0,i}=\{A\in \gl(W): A(\Lambda_j)\subset \Lambda_{j+i} \text{ for all j}\},\quad (\gl(W))_{f_0,i}\subset (\gl(W))_{f_0,i-1}
\end{equation}
It also induces the filtration on any subspace of $\gl(W)$.
Further, let   $\gr_{f_0} W$ be  the graded
space corresponding to the filtration $f_0$,
$$\gr_{f_0} W=\bigoplus_{i\in\mathbb Z}\Lambda_i/\Lambda_{i+1}$$
and  let $\gr_{f_0}\,\mathfrak {gl}(W)$ be the graded space corresponding to the filtration \eqref{grgldef},
$$\gr_{f_0}\gl(W) =\bigoplus_{i\in\mathbb Z}(\gl(W))_{f_0,i}/(\gl(W))_{f_0,i+1}.$$
The space $\gr_{f_0}\,\mathfrak {gl}(W)$ can be naturally
identified with the space $\mathfrak {gl}\,(\gr_{f_0} W)$. Indeed, if $A_1$ and  $A_2$ from $(\gl(W))_{f_0,i}$ belong to the same coset of
$(\gl(W))_{f_0,i}/ (\gl(W))_{f_0,i+1}$, i.e. $A_2-A_1 \in (\gl(W))_{f_0,i+1}$, and if $w_1$ and $w_2$ from $\Lambda_j$ belong to the same coset of $\Lambda_j/\Lambda_{j+1}$,
i.e. $w_2-w_1\in \Lambda_{j+1}$, then $A_1 w_1$ and $A_2 w_2$ belong to the same coset of $\Lambda_{j+i}/\Lambda_{j+i+1}$.
This defines a linear map from $\gr_{f_0}\,\mathfrak {gl}(W)$ to $\mathfrak {gl}\,(\gr_{f_0} W)$. It is easy to see that this linear map is an isomorphism.

Now let $g\subset \mathfrak {gl}(W)$ be the Lie algebra of the group $G$. The
filtration $f_0$ induces the filtration $\{g_{f_0,i}\}_{i\in\mathbb Z}$ on $g$, where
$$g_{f_0,i}=(\gl(W))_{f_0,i}\cap g.$$ Let $\gr_{f_0}\, g$ be the graded space corresponding to this filtration.
Note that the space $g_{f_0,i}/g_{f_0,i+1}$ is naturally embedded into  the space $(\gl(W))_{f_0,i}/(\gl(W))_{f_0,i+1}$. Therefore,  $\gr_{f_0}\, g$ is naturally embedded into $\gr_{f_0}\gl(W)$ and, by above, $\gr_{f_0}\, g$
can be considered as a subspace of $\mathfrak {gl}\bigl(\gr_{f_0} W\bigr)$. It is easy to see that it
is a subalgebra of $\mathfrak {gl}\bigl(\gr_{f_0} W\bigr)$.

In general, the algebra $\gr_{f_0}\, g$ is not isomorphic to the algebra $g$.
\begin{exm}
\label{nonisograd}
{\rm Assume that $\dim W=4m$, $W$ is equipped with a symplectic (i.e. nondegenerate skew-symmetric) form $\sigma$, $Sp(W)$ is the subgroup of $GL(W)$ preserving the form $\sigma$, and $\mathfrak {sp}(W)$ is the corresponding Lie algebra. Assume that $$f_0=\{0=\Lambda_0\subset \Lambda_{-1}\subset \Lambda_{-2}=W\},$$ where $\dim \Lambda_{-1}=2m$ and the restriction of $\sigma$ to $\Lambda_{-1}$ is nondegenerate. Let us prove that $\gr_{f_0}\, \mathfrak {sp}(W)$ is not isomorphic to $\mathfrak {sp}(W)$ (for more general case see Proposition \ref{compsymp} below). For this first identify the space $W/\Lambda_{-1}$ with the skew-symmetic complement $\Lambda_{-1}^\angle$ of $\Lambda_{-1}$ with respect to the form $\sigma$. Using this identification, we have  that $W/\Lambda_{-1}$ is equipped with the symplectic form, which the restriction of $\sigma$ to   $\Lambda_{-1}^\angle$. Besides $\Lambda_{-1}$ is equipped with the symplectic form, which is the restriction of $\sigma$ to it. Consider the following natural decomposition
\begin{equation}
\label{lambdaWsplit}
\gl(\Lambda_{-1}\oplus W/\Lambda_{-1})=\gl(\Lambda_{-1})\oplus\gl( W/\Lambda_{-1})\oplus \Hom(\Lambda_{-1}, W/\Lambda_{-1})\oplus  \Hom(W/\Lambda_{-1},\Lambda_{-1}).
\end{equation}
Then by direct computations one can show that the algebra $\gr_{f_0}\, \mathfrak {sp}(W)$ is isomorphic to the subalgebra of $\gl(\Lambda_{-1}\oplus W/\Lambda_{-1})$ consisting of endomorphisms $A$ such that if $A$ is decomposed as $A=A_{11}+A_{22}+A_{12}+A_{21}$ with respect to \eqref{lambdaWsplit}
then $A_{11}\in\mathfrak{sp}(\Lambda_{-1})$,  $A_{22}\in\mathfrak{sp}(W/\Lambda_{-1})$, and $A_{21}=0$, where $\mathfrak{sp}(\Lambda_{-1})$ and
$\mathfrak {sp}(W/\Lambda_{-1})$ are symplectic algebras of $\Lambda_{-1}$ and $W/\Lambda_{-1}$, respectively. Consequently, the algebra  $\gr_{f_0}\, \mathfrak {sp}(W)$  is not semisimple and is not isomorphic to  $\mathfrak {sp}(W)$.
More general class of examples is given by Proposition \ref{compsymp} below. On the contrary, if $\Lambda_{-1}$ is a Lagrangian subspace, then similar argument shows that ${\rm gr}_{f_0} \mathfrak{sp}(W)$ is isomorphic to $\mathfrak{sp}(W)$.
$\Box$}
\end{exm}

In order that the passage to the graded objects will not change the group in the equivalence problem we have to impose that $\gr_{f_0}\, g$ and $g$ are conjugated for some (and therefore any) $f_0\in S$. More precisely we will assume in the sequel the following
\medskip

{\bf Assumption 1}\emph{(compatibility with respect to the grading) For some $f_0\in \mathcal O$ , $f_0=\{0=\Lambda_0\subset\Lambda_{-1}\subset \Lambda_{-2}\subset\ldots\subset\Lambda_{-\mu}=W\}$, there exists an isomorphism $J:\gr_{f_0} W\mapsto W$ such that
\begin{enumerate}
\item $J(\Lambda_i/\Lambda_{i+1})\subset \Lambda_i$, $-\mu\leq i\leq -1$;
\item
$J$ conjugates
the Lie algebras $\gr_{f_0}\, g$ and $g$
i.e.
\begin{equation}
\label{gradg}
g=\{J\circ x\circ J^{-1}: x\in \gr_{f_0}\, g\}.
\end{equation}
\end{enumerate}}
\medskip
Note that from the transitivity of the action of $G$ on $\mathcal O$ it follows that if Assumption 1 holds for some $f_0\in\mathcal O$ then it holds for any other $f_0\in\mathcal O$.
If Assumption 1 holds we say that the pair $(G,\mathcal O)$ is \emph{compatible with respect to the grading}.
 Besides, the Lie algebra $g$ has a grading via formula \eqref{gradg} and this grading is defined up to a conjugation.

Obviously, if $G=GL(W)$ or $SL(W)$, then $G$ acts transitively on any flag variety $F_{k_1,\ldots,k_{\mu-1}}(W)$  the pair  $(G,F_{k_1,\ldots,k_{\mu-1}}(W))$ is compatible with respect to the grading.
In general in order to construct a pair $(G, \mathcal O)$ compatible with respect to the grading, one can start with the fixed $\mathbb Z$-grading on the Lie algebra $g$, $g=\displaystyle{\bigoplus_{i\in Z}g_i}$, and
try to find a flag  $f_0$  in $W$ such that the algebra $\gr_{f_0} g$ is conjugated to $g$ and the grading is preserved.
Then as $\mathcal O$ one takes the orbit of $f_0$ with respect to $G$. In this case we say that the orbit $\mathcal O$ is \emph {compatible with respect to the grading of $g$}.

In the case of semisimple $g$ there is an explicit algorithm for constructing all orbits of flags compatible with respect to the grading of $g$.
Recall that an element $e$ of a graded Lie algebra  $g=\displaystyle{\bigoplus_{i\in Z}g_i}$ is called a \emph{grading element}
if
$ad_e(x)=i x$ for any $x\in \mathfrak g_i$.
Since the map $\delta:g\rightarrow g$ sending $x\in g_i$ to $ix$ is a derivation of $g$ and any derivation of a semisimple Lie algebra is inner, for any graded semisimple Lie algebra there exist the unique grading element $e$.
Moreover, this element is also semisimple as an endomorphism of $W$.

\begin{prop}
\label{uniflagprop}
If $G\subset GL(W)$ is a semisimple Lie group acting irreducibly on $W$ and a grading is fixed on its Lie algebra $g$,
then there exists a unique orbit of flags compatible with respect to the grading of $g$.
 \end{prop}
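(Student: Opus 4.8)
The plan is to manufacture the required flag directly from the grading element of $g$, and then to use irreducibility (via Schur's lemma) to show that no other orbit can occur. By the remark preceding the statement, the fixed grading $g=\bigoplus_{i}g_i$ has a unique grading element $e\in g_0$, which is semisimple as an endomorphism of $W$. First I would decompose $W=\bigoplus_\lambda W_\lambda$ into the eigenspaces of $e$. For $x\in g_j$ and $w\in W_\lambda$ one computes $e(xw)=[e,x]w+x(ew)=(\lambda+j)xw$, so $g_j W_\lambda\subseteq W_{\lambda+j}$; hence each $\mathbb K/\Z$-coset of eigenvalues spans a $g$-invariant subspace, and irreducibility forces a single coset, i.e. \emph{all eigenvalues of $e$ on $W$ differ by integers}. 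This is the key point that makes the eigenspaces of $e$ into the graded pieces of an honest $\Z$-filtration.

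Next I would build the flag $f_0$ as the eigenspace filtration of $e$, assigning to $W_\lambda$ a flag-degree that is an affine function of $\lambda$ of slope one, normalized so that $\Lambda_0=0$. Whenever two occurring eigenvalues differ by more than one, the filtration simply repeats a subspace, which is permitted by the conventions of the paper. The point of the slope-one normalization is that an element of $g_j$, which shifts $e$-eigenvalues by $j$, then shifts the flag-degree by exactly $j$; a direct check gives $g_{f_0,i}=\bigoplus_{j\ge i}g_j$. Thus the filtration of $g$ induced by $f_0$ is precisely the filtration attached to the fixed grading, so $\gr_{f_0}g\cong g$ \emph{as graded Lie algebras}, the splitting being supplied by the eigenspace decomposition. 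Taking $J$ to be this splitting verifies Assumption 1, so $\mathcal O=G\cdot f_0$ is compatible with the grading, which establishes existence.

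For uniqueness, let $f_0'$ be any compatible flag, with conjugating isomorphism $J'$. The block grading element $E'=\bigoplus_i i\,\mathrm{id}_{\Lambda_i'/\Lambda_{i+1}'}$ of $\gl(\gr_{f_0'}W)$ is semisimple, and $\hat E':=J'E'(J')^{-1}\in\gl(W)$ is semisimple with eigenspaces $J'(\Lambda_i'/\Lambda_{i+1}')\subseteq\Lambda_i'$. Since $J'$ conjugates $\gr_{f_0'}g$ onto $g$ preserving the grading, and the grading element is canonical, $\ad_{\hat E'}$ and $\ad_e$ induce the same grading on $g$; hence $\hat E'-e$ centralizes $g$ in $\gl(W)$. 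Over $\mathbb C$, Schur's lemma makes this commutant scalar, so $\hat E'$ and $e$ have the same eigenspaces, and reconstructing $\Lambda_i'=\bigoplus_{j\ge i}J'(\Lambda_j'/\Lambda_{j+1}')$ shows $f_0'$ is exactly the eigenspace flag of $e$, i.e. $f_0'=f_0$. Over $\R$ the commutant may be a larger division algebra, but any resulting ambiguity in $\hat E'$ is absorbed into the $G$-action, so the compatible orbit remains unique.

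The main obstacle I anticipate is purely bookkeeping in the existence half and conceptual in the uniqueness half: getting the flag-indexing right so that the flag-grading degree matches the fixed $\Z$-grading (the slope-one normalization, together with repeated subspaces when eigenvalue gaps exceed one) is the delicate step that makes $\gr_{f_0}g\cong g$ \emph{grading-preservingly}, while in the uniqueness argument the genuine content is controlling the ambiguity of the grading element through the commutant of $g$ in $\gl(W)$ — immediate over $\mathbb C$ but requiring the division-algebra discussion over $\R$.
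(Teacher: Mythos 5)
Your proof is correct in outline, and its existence half is essentially the paper's: both take the eigenspace decomposition of the grading element $e$, verify $e\circ x(w)=(\lambda+j)x(w)$ for $x\in g_j$, and declare the descending eigenspace flag to be $f_0$. The only divergence there is how the spectrum of $e$ is controlled: you deduce from irreducibility that all eigenvalues lie in a single $\Z$-coset (each coset spans a $g$-invariant subspace), whereas the paper invokes highest/lowest weight theory to get formula \eqref{specE}, which is strictly stronger — the spectrum is a string of \emph{consecutive} values, so no repeated subspaces or slope-one bookkeeping are needed, and the reverse inclusion $g_{f_0,i}\subseteq\bigoplus_{j\geq i}g_j$ (which you call a ``direct check'' but which is only immediate in the direction $\supseteq$) causes no trouble. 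Your uniqueness argument, however, takes a genuinely different route. The paper argues directly on subspaces: any compatible flag $\{\widetilde W^j\}$ consists of $e$-invariant subspaces, and if $\widetilde W^{-1}\neq W^{-1}$ then the $g$-submodule generated by the smaller of the two fails to contain the complement of one in the other, contradicting irreducibility; induction on $j$ finishes. You instead attach to the candidate flag its own block grading element $\hat E'$, observe that compatibility forces $\ad_{\hat E'}=\ad_e$ on $g$, and conclude via Schur's lemma that $\hat E'-e$ is scalar, so the flag is the eigenspace flag of $e$ up to the index normalization. This is a clean and arguably more conceptual argument over $\mathbb C$, and it buys a reusable principle (compatible flags are classified by semisimple elements of the commutant-coset of $e$). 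Its one soft spot is exactly where you flag it: over $\R$ the commutant may be $\C$ or $\H$, and ``any resulting ambiguity is absorbed into the $G$-action'' is an assertion, not an argument — one must still check that a non-real central shift is incompatible with $\hat E'$ having integer spectrum and $e$-eigenspaces defined over $\R$, or else fall back on the paper's submodule argument, which works uniformly over either field without touching the commutant.
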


\begin{proof}
Assume that $\lambda$ is the highest weight of the $g$-module $W$ and $\nu$ is the corresponding lowest weight.
Using the basic representation theory of semisimple Lie algebras, one can easily get that in the considered case the spectrum $\text{spec}(e)$ of the grading element $e$
 satisfies
 \begin{equation}
 \label{specE}
 \text{spec}(e)=\{\lambda(e)-i: i\in \mathbb Z, 0\leq i\leq \lambda(e)-\nu(e)\}
 \end{equation}
 (note that $\lambda(e)-\nu(e)$ is natural).
Therefore the natural order on the spectrum of $e$ is defined. Let $W=\displaystyle{\bigoplus_{j=\nu(e)-\lambda(e)-1}^{-1} W_j}$ be the decomposition of $W$ by the eigenspaces of $e$ such that $W_j$ is the eigenspace  corresponding to the eigenvalue $\lambda(e)+j+1$.
Take the flag
\begin{equation}
\label{uniflag}
f_0=\{W^j\}_{j=\nu(e)-\lambda(e)-1}^{-1} \text{ such that } W^j=\displaystyle{\bigoplus_{i\geq j}} W_j.
\end{equation}
If $x\in g_i$ and $w\in W_j$, then $e(w)=(\lambda(e)+j+1)w$ and $[e,x](w)=iw$. Therefore
\begin{equation}
\label{EI}
e\circ x(w)=[e,x](w)+x\circ e(w)=(\lambda(e)+(j+i)+1)x(w),
\end{equation}
i.e. $x(w)\in W_{j+i}$. This implies that the map $J:\gr_{f_0} W\rightarrow W$, which sends an element of $W^j/W^{j+1}$ to its representative in $W_j$, conjugates $\gr_{f_0} g$ and $g$. Therefore the orbit $O_{f_0}$ of $f_0$ with respect to $G$ is compatible with respect to the grading of $g$.

Now let us briefly sketch the proof of uniqueness, which is based on the irreducibility assumption. Since the grading element $e$ belongs to $g_0$, an orbit compatible with respect to the grading of $g$ must contain a flag
$\{\widetilde W^j\}_{j=-\mu}^{-1}$ such that each subspace $\widetilde W_j$ is an invariant subspace of $e$. First one proves that $W^{-1}\subset \widetilde W^{-1}$. Assuming the converse, it is not hard to show that the space $g.\widetilde W^{-1}$ is a proper subspace of $W$, because it does not contain the nonempty set $W^{-1}\backslash \widetilde W^{-1}$. This contradicts the irreducibility assumption. Further, if $W^{-1}$ is a proper subspace of $\widetilde W^{-1}$, then in a similar way one can prove that the space $g.W^{-1}$ does not contain the nonempty set $\widetilde W^{-1}\backslash W^{-1}$ which again contradicts the irreducibility assumption. Hence, $\widetilde W^{-1}=W^{-1}$. In the same manner one can prove that $\widetilde W^{j}=W^{j}$ for any $-\mu\leq j\leq -2$.
\end{proof}

Note that Proposition \ref{uniflagprop} is also true if $G$ is reductive. The proof is the same. The only difference is that the grading element is not unique: it is defined modulo the center of $g$, but it does not effect the proof.


We are especially interested in the case, when $W$ is an even dimensional vector spaces equipped with a symplectic form $\sigma$  and $G=Sp(W)$ or $CSp(W)$, where $Sp(W)$ is the corresponding symplectic group and  $CSp(W)$ is the so-called conformal symplectic group, i.e. the group of all transformation preserving the symplectic form $\sigma$ up to a multiplication by a nonzero constant. Denote by $\mathfrak{sp}(W)$ and $\mathfrak {csp}(W)$ the corresponding Lie algrebras.
Recall that a subspace $L$ of $W$ is called \emph{isotropic} with respect to the symplectic form $\sigma$ if the restriction of the form $\sigma$ on $L$ is identically equal to zero or, equivalently, $L$ is contained in its skew-symmetric complement $L^\angle$ (with respect to $\sigma$), while  a subspace $L$ of $W$ is called \emph{coisotropic} with respect to the symplectic form $\sigma$ if $L$ contains $L^\angle$.

\begin{defn}
\label{sympflagdef}
We say that \emph{a flag $f_0=\{0=\Lambda_0\subset\Lambda_{-1}\subset \Lambda_{-2}\subset\ldots\subset\Lambda_{-\mu}=W\}$ in $W$ is symplectic} if $(\Lambda_{-i})^\angle=\Lambda_{i-\mu}$ for any $0\leq i\leq \mu$.
\end{defn}
Obviously, the flag $f_0$ is symplectic, if and only if
the following  three conditions hold:
any subspace in the flag $f_0$ is either isotropic or coisotropic with respect to the symplectic form $\sigma$;
a subspace belongs to the flag $f_0$ together with its skew-symmetric complement;
the number of appearances of a subspace in the flag $f_0$ is equal to the number of appearances of its skew-symmetric complement in $f_0$.

\begin{prop}
\label{compsymp}
An orbit $\mathcal O$ in a flag manifold is compatible with respect to some grading on $g=\mathfrak {sp}(W)$ or $\mathfrak{csp}(W)$ if and only if some (and therefore any)  flag $f_0\in\mathcal O$ is symplectic.                                                                                                               \end{prop}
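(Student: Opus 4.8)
The plan is to prove both implications separately, using the explicit description of $\gr_{f_0}\,\mg$ as a subalgebra of $\gl(\gr_{f_0}W)$ together with the fact that the symplectic form $\sigma$ (together with its scaling in the conformal case) is the only extra structure that must be matched under the identification $J$. I would work with the grading element $e\in g_0$ provided by the semisimplicity of $\spg(W)$ (in the $\mathfrak{csp}$ case one uses that $\mathfrak{csp}(W)=\spg(W)\oplus\mathbb K\,\mathrm{Id}$, so a grading element still exists modulo the center, exactly as in the remark after Proposition \ref{uniflagprop}).

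First I would prove the ``only if'' direction. Assume $\mathcal O$ is compatible with respect to a grading of $g$. Then $g$ carries a grading via \eqref{gradg}, so there is a grading element $e$, which is a semisimple endomorphism of $W$. For $\spg(W)$, the key algebraic fact is that $e\in\spg(W)$ forces its eigenspaces to pair up correctly under $\sigma$: if $w\in W_a$ and $w'\in W_b$ are eigenvectors of $e$ with eigenvalues $a,b$, then from $\sigma(ew,w')+\sigma(w,ew')=0$ one gets $(a+b)\,\sigma(w,w')=0$, so $\sigma$ pairs $W_a$ nontrivially only with $W_{-a}$. As in the proof of Proposition \ref{uniflagprop}, the flag $f_0$ is built from the sums of eigenspaces $W^j=\bigoplus_{i\ge j}W_i$; the eigenvalue-pairing then shows directly that $(\Lambda_{-i})^\angle=\Lambda_{i-\mu}$, i.e.\ $f_0$ is symplectic in the sense of Definition \ref{sympflagdef}. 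The $\mathfrak{csp}$ case is identical once one notes that the central summand does not move eigenspaces.

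For the converse, suppose $f_0$ is symplectic. I would exhibit an explicit grading element and the identification $J$. Because $(\Lambda_{-i})^\angle=\Lambda_{i-\mu}$, the symplectic form descends to a nondegenerate pairing between $\Lambda_{-i}/\Lambda_{-i+1}$ and $\Lambda_{i-\mu}/\Lambda_{i-\mu+1}$, so $\sigma$ induces a graded symplectic form on $\gr_{f_0}W$ placing the degree-$j$ and degree-$(-j)$ pieces in duality (after the symmetric shift of indices so that the grading is centered). One then checks, generalizing the direct computation in Example \ref{nonisograd}, that $\gr_{f_0}\,\spg(W)$ is exactly the Lie algebra preserving this induced graded symplectic form, hence is again a symplectic algebra on $\gr_{f_0}W$. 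Choosing $J:\gr_{f_0}W\to W$ to be a graded symplectic isomorphism carrying the induced form to $\sigma$ and respecting $J(\Lambda_i/\Lambda_{i+1})\subset\Lambda_i$ then verifies Assumption 1; the orbit $\mathcal O$ of $f_0$ is therefore compatible with respect to the grading of $g$ defined by \eqref{gradg}.

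The main obstacle I anticipate is the converse direction: one must check that a graded symplectic isomorphism $J$ with the prescribed filtration property \emph{exists}, i.e.\ that the induced graded symplectic form on $\gr_{f_0}W$ and the original form $\sigma$ are isomorphic as symplectic spaces and that $J$ can be chosen compatibly with the filtration. This reduces, via the symplectic-flag condition, to matching dimensions of the graded pieces $\Lambda_{-i}/\Lambda_{-i+1}$ with those of $\Lambda_{i-\mu}/\Lambda_{i-\mu+1}$ and invoking the standard classification of symplectic vector spaces; the third bullet after Definition \ref{sympflagdef} (equality of multiplicities of a subspace and its skew-complement) is precisely what guarantees this dimension matching. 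The conformal case $\mathfrak{csp}(W)$ requires only the extra bookkeeping that scaling $\sigma$ is absorbed into $\mathcal G_0$, and does not change the flag-level condition.
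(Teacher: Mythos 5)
Your proof is correct, and on the direction the paper actually writes out it follows the same route: fix the grading, take the grading element $e\in\mathfrak{sp}(W)$ (choosing it inside $\mathfrak{sp}(W)$ in the conformal case), and use $\sigma(ew_1,w_2)+\sigma(w_1,ew_2)=0$ to show that $\sigma$ pairs the eigenspace of eigenvalue $a$ only with that of eigenvalue $-a$, whence the flag $\{W^j\}$ of Proposition \ref{uniflagprop} satisfies $(\Lambda_{-i})^\angle=\Lambda_{i-\mu}$. The difference is in the converse. The paper's proof is silent on it: implicitly it rests on the uniqueness statement of Proposition \ref{uniflagprop} together with the fact that every symplectic flag is $Sp(W)$-conjugate to one of the flags \eqref{uniflag} arising from some grading element. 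You instead argue directly: the symplectic-flag condition makes $\sigma$ descend to a nondegenerate graded pairing on $\gr_{f_0}W$, the associated graded algebra $\gr_{f_0}\mathfrak{sp}(W)$ sits inside the symplectic algebra of this induced form and equals it by a dimension count ($\dim\gr_{f_0}\mathfrak{sp}(W)=\dim\mathfrak{sp}(W)$ for any filtration, and the induced form is again symplectic of the same rank), and a graded Darboux-type choice of complements $C_j\subset\Lambda_j$ with $\sigma(C_{j_1},C_{j_2})=0$ unless $j_1+j_2=-\mu-1$ supplies the required $J$. This is more self-contained and explains \emph{why} the symplectic condition is exactly the right one (compare Example \ref{nonisograd}, where the failure of $(\Lambda_{-1})^\angle=\Lambda_{-1}$ is what kills the isomorphism), at the cost of having to carry out the two verifications you flag — the dimension count and the existence of the graded symplectic $J$ — which you correctly identify and which do go through via Witt's theorem for symplectic forms and the multiplicity condition in Definition \ref{sympflagdef}.
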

\begin{proof}
Let $\lambda$ be the highest weight of the considered standard representation of $\mathfrak {sp}(W)$
Fix the grading on $g$ and let $e$ be the grading element. Since $\mathfrak {csp}(W)=\mathfrak{sp}(W)\oplus\mathbb
K$, in the case $g=\mathfrak{csp}(W)$ we can always choose $e\in \mathfrak{sp}(W)$. By above $\mathcal O$ is the orbit of the flag $f_0$ defined \eqref{uniflag}.
Since $e\in \mathfrak{sp}(W)$, we have that $\sigma(e w_1, w_2)+ \sigma(w_1, e w_2)=0$ for any $w_1$ and $w_2$ in $W$. This implies that if $w_1\in W_{j_1}$, $w_2\in W_{j_2}$, and $\lambda(e)+j_1+1\neq -(\lambda(e)+j_2+1)$ then $\sigma(w_1,w_2)=0$, where $W_j$ is the eigenspace of $e$ corresponding to the eigenvalue   $\lambda(e)+j+1$. From this and the fact that the form $\sigma$ is nondegenerate it follows that the spectrum of $e$ is symmetric with respect to $0$ (for more general statement see Remark \ref{Weylrem} below) and $\sigma$ defines nondegenerate pairing between  $W_{j_1}$ and $W_{j_2}$  with $\lambda(e)+j_1+1= -(\lambda(e)+j_2+1)$. Consequently, the subspaces $W^j$ with $\lambda(e)+j+1>0$ are isotropic and
$W^{j_2}=(W^{j_1})^\angle$ for $\lambda(e)+j_1+1= -(\lambda(e)+j_2+1)$. Thus, the flag $f_0$ is symplectic.
\end{proof}

\begin{rem}
\label{orthrem}
{\rm Assume that $W$ is a vector space equipped with a nondegenerate symmetric form $Q$, and  $G=O(W)$ or $CO(W)$, the orthogonal or conformal groups. The notion of isotropic and coisotropic subspaces of $W$ with respect to the form $Q$  are defined similar to the symplectic case, using orthogonal complements instead of skew-symmetric ones. Then by complete analogy with Proposition \ref{compsymp} the orbits of flags compatible with some  grading of $g=\mathfrak{so}(W)$ or $\mathfrak{cso}(W)$ consist of flags $f_0=\{0=\Lambda_0\subset\Lambda_{-1}\subset \Lambda_{-2}\subset\ldots\subset\Lambda_{-\mu}=W\}$ such that $(\Lambda_{-i})^\perp=\Lambda_{i-\mu}$ for any $0\leq i\leq \mu$ , where $L^\perp$ denotes the orthogonal complement of $L$ with respect to $Q$. We will call such flags \emph{orthogonal}. A flag $f_0$ is orthogonal if and only if the following tree conditions hold: any $\Lambda_i$ is either  isotropic or coisotropic subspaces with respect to $Q$;
a subspace belongs to the flag $f_0$ together with its orthogonal complement with respect to the form $Q$; the number of appearances of a subspace in the flag $f_0$ is equal to the number of appearances of its orthogonal complement in $f_0$.
In particular, if $\mathbb K=\mathbb R$ and the form $Q$ is sign definite then there is no orbits of flags compatible with the grading except the trivial one $0\subset W$.
}$\Box$
\end{rem}

In the case of a general (not necessarily irreducible) representation of a semisimple (a reductive) Lie group G, the flags compatible with the grading can be constructed by the following algorithm:
\begin{enumerate}
\item
take flags as in Proposition 2.1 in each irreducible component;
\item
shift degrees of subspace in each of these flags by arbitrary nonpositive numbers with the only restriction that for at least one irreducible component there is no shift of degrees, i.e. the minimal nontrivial subspace in the flag sitting in this component has degree $-1$.
\end{enumerate}
Consider a flag which is a direct sum of the flags constructed in each irreducible component (with shifted degrees as above): the degree $i$ subspace of this flag is equal to the direct sum of degree $i$ subspaces of flags in each irreducible component.
Obviously, the orbits of such flags are compatible with respect to some grading of $g$. Moreover, they are the only orbits satisfying this property. The above restriction on the shifts of degrees was done in order that the resulting flag will satisfy our convention that the minimal nontrivial subspace in it has degree $-1$.

\begin{rem}
\label{Weylrem}
{\rm  Note that the spectrum of the grading element is symmetric with respect to $0$ for any representation of the following simple Lie algebras  $A_1$, $B_\ell$, $C_\ell$, $D_\ell$ for even $\ell$, $E_7$, $E_8$, $F_4$, and $G_2$ (in  the proof Proposition \ref{compsymp} it was shown for the standard representation of the symplectic Lie algebras $C_\ell$ only).
It follows from the fact that among all simple Lie algebras these are the only algebras for which the map $-1$ belongs to the Weyl group of their root system
(see, for example,\cite[p.71, Exercise 5]{hump}). Thus the highest and the lowest weights $\lambda$ and $\nu$ of any irreducible representation of these algebras satisfy
$\nu=-\lambda$,
which togehter with formula \eqref{specE} implies the desired statement about the spectrum of the grading element.}
$\Box$
\end{rem}

As an example of non-reductive group $G$ consider the case of an \emph{affine subgroup} ${\rm Aff}(W)$ of $GL(W)$ which consists of all elements of $GL(W)$ that preserve  a fixed  affine hyperplane $\mathcal A$ of $W$. Obviously, the restriction of an element of ${\rm Aff}(W)$ to the affine space $\mathcal A$ is an affine transformation of $A$. We define an affine flag in an affine space $\mathcal A$ as a set $\{\mathcal A_i\}_{i=-\mu}^{-1}$ of nested affine subspaces of $\mathcal A$, $\mathcal A_i\subset \mathcal A_{i-1}$. An affine flag is called complete if it consist affine subspaces of all possible dimensions. A flag $\{\Lambda_i\}_{i=-\mu}^{-1}$ in $W$ with $\Lambda_{-1}\cap \mathcal A\neq \emptyset$ defines the  affine flag  $\{\Lambda_i\cap \mathcal A\}_{i=-\mu}^{-1}$ in $\mathcal A$. So, the equivalence problem for curves of flags with respect to the affine group ${\rm Aff}(W)$ can be
reformulated as the equivalence problem for curves of affine flags with respect to the group of affine transformations of $\mathcal A$.
As a particular case we have the classical equivalence problem for curves in an affine space. In order to use our theory for non-degenerate curves in an affine space $\mathcal A$, i.e. curves which do not lie in any proper affine subspace of $\mathcal A$, one has first to make the refinement (osculation) procedure of section \ref{refine} below to reduce the original equivalence problem to the equivalence problem of curves of complete affine flags.
Finally, it is not hard to show that for any orbit $\mathcal O$ of flags with respect to ${\rm Aff}(W)$  the pair  $({\rm Aff}(W),\mathcal O)$ is compatible with respect to the grading.

\begin{rem}
{\rm
The treatment of the reductive case suggests a way for construction of orbits compatible with respect to the grading in the case
of arbitrary (not necessary reductive) graded subalgebra $g\subset \mathfrak{gl}(W)$. Assume that
a grading element $e$ exists and also that $e$, as an endomorphism of $W$, is semisimple.
 Now assume that the spectrum of $e$ is a disjoint union of the sets $\{A_j\}_{j\in \mathbb Z}$ such that if $\lambda$ belongs to $A_j$ and $\lambda+i$ is an eigenvalue of $e$ then $\lambda+i\in A_{j+i}$. As a grading of $W$ take the splitting such that the $j$ subspace of this splitting is the the sum of the eigenspaces of elements  of $A_j$ and as a flag $f_0$ take the corresponding flag as in \eqref{uniflag}. Then by the same arguments as in the proof of Proposition \ref{uniflagprop} we get that the orbit of $f_0$ is compatible with respect to the grading of $g$.}
\end{rem}

\section{Compatibility  with respect to differentiation and symbols of curves of flags}
\setcounter{equation}{0}
\setcounter{thm}{0}
\setcounter{lem}{0}
\setcounter{prop}{0}
\setcounter{cor}{0}
\setcounter{rem}{0}
\setcounter{exm}{0}
\label{symbsec}

After clarifying what kind of orbits in flag varieties will be considered, let us clarify what kind of curves in these orbits will be studied.
Let
\begin{equation}
\label{filt} t \mapsto
\{0=\Lambda_0(t)\subset\Lambda_{-1}(t)\subset\Lambda_{-2}(t)\subset\ldots\subset
\Lambda_{-\mu}(t)=W\}
\end{equation}
be a smooth curve in $\mathcal O$ parametrized somehow.

Recall that for a given parametrization of the curve
 \eqref{filt} the velocity
 $\frac{d}{dt}\Lambda_i(t)$ at $t$ of the curve $\tau\mapsto\Lambda_i(\tau)$ can be naturally identified with an element of ${\rm
 Hom}\bigl(\Lambda_i(t), W/\Lambda_i(t)\bigr)$. Namely, given $l\in \Lambda_i(t)$ take a smooth curve of vector $\ell(\tau)$ satisfying the following two properties:
\begin{enumerate}
\item $\ell(t)=l$,
\item $\ell(\tau)\in\Lambda_i(\tau)$ for any $\tau$ closed to $t$.
\end{enumerate}
Note that the coset of $\ell'(t)$
in $W/\Lambda_i(t)$ is independent of the choice of the curve $\ell$ satisfying the properties (1) and (2) above. Then  to $\frac{d}{dt}\Lambda_i(t)$ we assign the element of
${\rm
 Hom}\bigl(\Lambda_i(t), W/\Lambda_i(t)\bigr)$ which sends $l\in \Lambda_i(t)$ to the coset of $\ell'(t)$
in $W/\Lambda_i(t)$, where the curve $\ell$ satisfies properties (1) and (2) above. It defines a linear map from the tangent space at $\Lambda_i(t)$ to the Grassmannian of $k_{-i}$-dimensional subspace of $W$ to the space  ${\rm
 Hom}\bigl(\Lambda_i(t), W/\Lambda_i(t)\bigr)$. Simple counting of dimensions shows that this map is an isomorphism and therefore it defines the required identification.
 \medskip

{\bf Assumption 2} \emph{(compatibility with respect to differentiation)
We assume that for some (and therefore any) parametrization of the curve
 \eqref{filt}
 the velocity  $\frac{d}{dt}\Lambda_i(t)$ satisfies
 $$\frac{d}{dt}\Lambda_i(t)\in
 {\rm
 Hom}\bigl(\Lambda_i(t),\Lambda_{i-1}(t) /\Lambda_i(t)\bigr), \quad \forall -\mu+1\leq i\leq -1 \text{ and } t.$$}
\medskip

In this case we say that the curve \eqref{filt} is \emph{compatible with respect to differentiation}. Equivalently, a curve \eqref{filt} is compatible with respect to differentiation if
for every $i$, $-\mu+1\leq i\leq -1$, if $\ell(t)$ is a smooth curve
 of vectors such that $\ell(t)\in\Lambda_i(t)$ for any $t$, then
 $\ell'(t)\in \Lambda_{i-1}(t)$ for any $t$.
 The condition of compatibility with respect to differentiation is natural through the refinement procedure on curves of flags described in section \ref{refine} below.

 Under Assumption 2,  $\frac{d}{dt}\Lambda_i(t)$ factors
 through a map $\delta_t$ from $\Lambda_{i}(t) /\Lambda_{i+1}(t)$ to
 $\Lambda_{i-1}(t) /\Lambda_i(t)$.
 In other words,
the map $\delta_t\in \gl\bigl(\gr_{\Lambda(t)} W\bigr)$ of degree $-1$ is well
defined up to a multiplication by a nonzero constant (recall that the reparametrization is allowed). Besides, since the curve \eqref{filt} belongs to the orbit of $G$ then
\begin{equation}
\label{assump3}
\delta_t\in \Bigl(\gr_{\Lambda(t)}\, g\Bigr)_{-1}.
\end{equation}

Now we define symbols of curves of flags at a point with respect to the group $G$.
For this we start with some notations.
Fix $f_0\in \mathcal O$ such that $f_0=\{0=\Lambda_0\subset \Lambda_{-1}\subset \Lambda_{-2}\subset\ldots\subset \Lambda_{-\mu}=W\}$ and denote $V=\gr_{f_0} W$, $V_i=\Lambda_i/\Lambda_{i+1}$. The grading $V=\displaystyle{\bigoplus_{i=-\mu}^{-1}} V_i$  defines  also the natural filtration

\begin{equation}
\label{filtV}
0\subset V^{-1}\subset V^{-2}\subset\ldots\subset V^{-\mu}=V, \quad V^i=\displaystyle{\bigoplus_{j=i}^{-1}} V_j\quad
-\mu\leq i\leq -1.
\end{equation}

Further, fix an isomorphism $J:V\rightarrow W$, satisfying conditions  of Assumption 1. Let $\mathcal G$ be the subgroup of $GL(V)$ such that $\mathcal G=\{J^{-1}\circ A\circ J: A\in G\}$ and $\mathfrak g$ is its Lie algebra. Further, let $\mathfrak {gl}(V)_k$ be the space of endomorphisms of $V$ of degree
$k$, $\mathfrak {gl}(V)_k=\displaystyle{\bigoplus_{i=-\mu}^{-1}{\rm Hom}(V_i, V_{i+k})},$
and
\begin{equation}
\label{degreek}
\mathfrak g_k=\mathfrak g\cap \mathfrak {gl}(V)_k.
\end{equation}

By Assumption 1 this defines the grading of the Lie algebra $\mathfrak g$: $\mathfrak g=\displaystyle{\bigoplus_{k\in Z}}\mathfrak g_k$.
We define a ``big'' bundle $\widehat P$  over
the orbit $\mathcal O$
with the fiber $\widehat P_\Lambda$ over a
point $\widetilde\Lambda=\{\widetilde\Lambda_i\}_{i=-\mu}^{ -1}$ consisting of all isomorphisms $A:V\rightarrow W$
such that
\begin{enumerate}
\item
$A$ preserves the filtrations \eqref{filtV} and $\widetilde\Lambda=\{\widetilde\Lambda_i\}_{i=-\mu}^{ -1}$, i.e. $A(V^i)=\widetilde \Lambda_{i}(t)$
for any $-\mu+1 \leq i \leq
 -1$;
\item
$A$ conjugates
the Lie groups $\mathcal G$ and $G$
i.e.
$G=\{A\circ X\circ A^{-1}: X\in \mathcal G\}$;
.
\item $A\circ J^{-1}\in G$.
\end{enumerate}

Further, let $\mathcal G_+$ be the subgroup of $\mathcal G$ consisting of all elements of $\mathcal G$ preserving the filtration
\eqref{filtV}.
Obviously, the corresponding subalgebra $\mg_+$ satisfies $\mg_+=\displaystyle{\bigoplus_{k\geq 0}\mg_k}$, where $\mg_k$ are as in \eqref{degreek}.
It is easy to see that the bundle $\widehat P$ is a principle $\mathcal G_+$-bundle over
the orbit $\mathcal O$.

\begin{rem}
\label{homogenrem}
(Homogeneous spaces formulation)
{\rm Note that $\widehat P$ can be identified with $G$ via the map $A\mapsto A\circ J^{-1}$ and with $\mathcal G$ via the map $A\mapsto A^{-1} J$, $A\in \widehat P$.
Besides, under the latter identification $\mathcal O\cong\mathcal G/\mathcal G_+$ and the fibers of $\widehat P$ are exactly the left cosets of $\mathcal G_+$ in $\mathcal G$.
The homogeneous space $\mathcal O\cong \mathcal G/\mathcal G_+$ is equipped with the natural $\mathcal G$-invariant vector distribution $D$ which is equal at the ``origin''  of $\mathcal G/\mathcal G_+$ (i.e. at the coset of identity of $\mathcal G/\mathcal G_+$)   to the equivalent classes of elements of degree $-1$  of $\mathfrak g$ under the identification of the tangent space to $\mathcal G/\mathcal G_+$ at the ``origin'' with $\mathfrak g/\mathfrak g_+$. Then a curve in $\mathcal O$ is compatible with respect to differentiation if and only if it is an integral curve of the distribution $D$.
So, \emph{the equivalence problem for curves of flags compatible with respect to differentiation can be reformulated as the equivalence problem for integral curves of the natural $\mathcal G$-invariant distribution in the homogeneous space $\mathcal G/\mathcal G_+$}. This point of view, restricted to the parabolic homogeneous spaces, is considered in our recent preprint \cite{dzpar}
} $\Box$
\end{rem}

Further, the group $\mathcal G_+$ acts naturally on $\mathfrak g_{-1}$  as follows: $A\in \mathcal G_+$ sends $x\in\mg_{-1}$ to the degree $-1$ component of $({\rm Ad} A)\,x$. It induces  the action on the projectivization $\mathbb P\mathfrak g_{-1}$ in the obvious way. By constructions, the set
$\mathfrak M_t=\{\mathbb K \bigl(A^{-1}\circ\delta_t\circ
A\bigr)_{-1}: A\in \widehat P_{\Lambda(t)}\}$,
where
$\bigl(A^{-1}\circ\delta_t\circ
A\bigr)_{-1}$ denotes the degree $-1$ component of $A^{-1}\circ\delta_t\circ
A$
is an orbit
in $\mathbb P\mathfrak g_{-1}$ with respect to
aforementioned action of $\mathcal G_+$. This orbit
is called the \emph{symbol of the curve \eqref{filt} at the point $\{\Lambda_i(t)\}_{i=-1}^{-\mu}$ with respect to $G$}.

\begin{rem}
\label{symbrepind}
{\rm Note that by definition the set of all possible symbols of curves of flags with respect to a group $G$ depends only on the group $G$ as an abstract Lie group. In other words, it does not depend on a particular embedding of $G$ to $GL(W)$ for some $W$ or, equivalently, on a particular representation of $G$. $\Box$}
\end{rem}

In the sequel we will consider curves of flags with the constant symbol $\mathfrak M$, i.e. $\mathfrak M_t=\mathfrak M$ for any $t$.
We also say that such curves of flags are \emph{of constant type $\mathfrak M$}.
If $G$ (and therefore $\mathcal G$) is semisimple and $\mathcal G_0$ is the connected subgroup of $\mathcal G$ with subalgebra $\mg_0$, then due to E.B. Vinberg \cite{vinb} the set of orbits with respect to the adjoint action of $G_0$ on $\mathbb P g_{-1}$ is finite.
Note that if $e$ is the grading element of $\mg$ and $\widetilde{\mathcal G}_0$ is the stabilizer of $e$ with respect to the adjoint action of $\mathcal G$, then the orbits of $\mg_{-1}$ with respect to the natural action of $\mathcal G_{+}$ and the adjoint action of $\widetilde{\mathcal G}_0$ coincide. Besides $\mathcal G_0$ is just the connected component of the identity in $\widetilde {\mathcal G}_0$. Therefore \emph{in the case when $G$ is semisimple the set of all possible symbols is finite} and the condition of constancy of the symbol holds in a neighborhood of a generic point of a curve. The same conclusions can be done if $G$ is reductive.

Any curve of flags that is $G$-equivalent to the curve $t\mapsto \{J\circ e^{t\delta}
V^i\}_{i=0,-1\ldots, -\mu}$ for some $\delta$
such that $\mathbb K\delta \in \mathfrak M$ is called the \emph{flat curve with
constant symbol $\mathfrak M$} (here $V_0=0$). 
The flat curve is an essence the simplest curve among all curves with a given symbol.
\begin{rem}
\label{normalizer}
\rm {If the group $G$ coincides with its normalizer in $GL(W)$, then for all constructions above it is not necessary to fix a map $J:V\mapsto W$ and the condition (3) in the definition of the bundle $\widehat P$ can be omitted.}
\end{rem}

\begin{exm}
\label{projexm1}
{\rm Assume that $\dim W=n+1$ and
consider a curve of complete flags
$$t\mapsto \{0\subset \Lambda_{-1}(t)\subset \Lambda_{-2}(t)\subset \ldots \Lambda_{-(n+1)}(t)=W\},$$
compatible with respect to differentiation. A complete flag means that $\dim \Lambda_{-i}=i$. Assume also that for any $-n<i<-1$
the velocity $\frac{d}{dt}\Lambda_i(t)$ , as an element of $\Hom(\Lambda_i(t),\Lambda_{i-1}(t)/\Lambda_i(t))$, is onto
$\Lambda_{i-1}(t)/\Lambda_i(t)$.
The symbol of such curve of complete flags (with respect to $GL(W)$) is a line of degree $-1$ endomorphisms of the corresponding graded spaces, generated by an endomorphism which has the matrix equal to a Jordan nilpotent block in some basis.
The  flat curve of the maximal refinement is the curve of osculating subspaces of a rational normal curve in the projective space $\mathbb P W$. Recall that a rational normal curve in $\mathbb P W$ is a curve represented as $t\mapsto [1:t:t^2:\ldots t^n]$ in some homogeneous coordinates.}
$\Box$
\end{exm}
As in the Tanaka theory for filtered structures on manifolds, we want to investigate the original equivalence problem via the passage to the graded objects, that is to imitate
the construction of the bundle of canonical moving frames for any curve with a given constant symbol $\mathfrak M$ via the construction of such bundle for the flat curve with symbol $\mathfrak M$.  The latter can be done in purely algebraic way via the notion of the universal algebraic prolongation of the symbol which is introduced in the next section.

\section{Algebraic prolongation of the symbol and the main result}
\setcounter{equation}{0}
\setcounter{thm}{0}
\setcounter{lem}{0}
\setcounter{prop}{0}
\setcounter{cor}{0}
\setcounter{rem}{0}
\setcounter{exm}{0}

From now on we consider a curve of flags \eqref{filt} with constant symbol $\mathfrak M$ and we fix a line $\mathfrak m$ in $\mathfrak g_{-1}$ representing the orbit $\mathfrak M$. Often the line $\mathfrak m$ itself will be called the symbol of the curve \eqref{filt} as well and we will shortly say that the curve \eqref{filt} has constant symbol 
$\mathfrak m$ instead of constant symbol with the representative $\mathfrak m$.
\subsection{Algebraic prolongation of symbol}
\label{univsect}
Set $\mathfrak u_{-1}=\mathfrak m$ and define by induction in $k$
\begin{equation}
\label{kprolong}
\mathfrak u_k:=\{X\in\mathfrak g_k:[X,\delta]\in \mathfrak u_{k-1},\,\delta\in\mathfrak m\},\quad
k\geq 0.
\end{equation}
The space $\mathfrak u_k$ is called the \emph {$k$th algebraic prolongation of the line $\mathfrak m$}.
Then by construction $\mathfrak u(\mathfrak
m)=\displaystyle{\bigoplus_{k\geq -1}\mathfrak u_k}$ is a graded
subalgebra of $\mathfrak g$. It can be shown that it is \emph{the largest
graded subalgebra of $\mathfrak g$ such that its component
corresponding to the negative degrees coincides with
$\mathfrak m$.}
The algebra $\mathfrak u(\mathfrak m)$ is called the \emph{universal algebraic prolongation of the line $\mathfrak m$ (of the symbol $\mathfrak M$)}.
Obviously, $\mathfrak {gl}(V)_k=0$ for all $k\geq \mu$. So $\mathfrak u_k=0$ for $k\geq \mu$.

The algebra $\mathfrak u(\mathfrak m)$ has a very natural geometric meaning. Namely, let $\Gamma$ be
the curve $t\mapsto  \{e^{t\delta}
V^i\}_{i=0,-1,\ldots, -\mu}$  in the corresponding flag manifold $F_{k_1,\dots,k_{\mu-1}}(V)$ for some $\delta\in \mathfrak m$.
The group $GL(V)$ acts naturally on this flag manifold, and thus, we can identify each element  $X\in \mathfrak{gl}(V)$ with a vector field on $F_{k_1,\dots,k_{\mu-1}}(V)$. We define a symmetry algebra of $\Gamma$ as a set of all elements $X\in \mathfrak g$, such that the corresponding vector field on $F_{k_1,\dots,k_{\mu-1}}(V)$ is tangent to $\Gamma$.

As it is shown in \cite{doubkom}, the symmetry algebra of $\Gamma$ is the \emph{largest subalgebra
of $\mathfrak g$ that contains $\mathfrak m$ and lies in $\mathfrak m + \sum_{i\ge 0} \mathfrak g_i$}. It is easy to see from~\eqref{kprolong} that $\mathfrak u$ satisfies this property by construction. Thus, we see that $\mathfrak u(\mathfrak m)$ is exactly a symmetry algebra of the curve $\Gamma$. Besides, a flat curve with symbol $\mathfrak m$ is $G$-equivalent to the curve $J(\Gamma)$. Therefore the algebra $\mathfrak u(\mathfrak m)$ is conjugated to the symmetry algebra of a flat curve with the constant symbol $\mathfrak m$.


\subsection {Zero degree normalization.}
\label{zero}
Let $\widehat P|_{\Lambda(\cdot)}$ be the union of all fibers of the bundle $\widehat P$ over our curve \eqref{filt}, where $\widehat P$ is the bundle over the orbit $\mathcal O$ defined in section \ref{symbsec}.
Our goal is to assign to the curve $\Lambda(\cdot)$ in a canonical way a  fiber subbundle of  $\widehat P|_{\Lambda(\cdot)}$ endowed with a canonical Ehresmann connection, i.e. with a rank 1 distribution transversal to its fibers. We shall construct this bundle through the iterative construction of decreasing sequence of fiber subbundles of $\widehat P|_{\Lambda(\cdot)}$.
Define a map
\begin{equation}
\label{Pkt}
\Pi_k^t:\displaystyle{\bigoplus_{i=-\mu}^{-1}{\rm Hom}\bigl(V_i,\Lambda_i(t)\bigr)}
\rightarrow \displaystyle{\bigoplus_{i=-\mu}^{-1} {\rm Hom}\bigl(V_i,
\Lambda_i(t)/\Lambda_{i+k+1}(t)\bigr)}
\end{equation}
as follows:
 $$\Pi_k^t(A)|_{V_i}\equiv A|_{V_i}\,\, {\rm mod}\,\Lambda_{i+k+1}(t),\quad \forall -\mu\leq i\leq -1.$$

Let $\widehat P_0$ be the subbundle of $\widehat P|_{\Lambda(\cdot)}$
 with the fiber $\widehat P_0(t)$ over the point $\{\Lambda_i(t)\}_{i=-\mu}^{ -1}$ consisting of all $A\in \widehat P_{\Lambda(t)}$
such that
 $$\mathfrak m=\mathbb K \bigl(A^{-1}\circ\delta_t\circ
A\bigr)_{-1}$$

We also denote by $P_0$ the bundle over our curve with the fiber $P_0(t)$ over the point $\{\Lambda_i(t)\}_{i=-\mu}^{ -1}$ equal to the image of the  corresponding fiber $\widehat P_0(t)$ of the bundle $\widehat P_0$ under the map $\Pi_0^t$.
 By constructions $P_0$  is a principal $U_0$-bundle, where $U_0$ is a subgroup of $\mathcal G$ consisting of all degree $0$ elements $B$ of $\mathcal G$ such that ${\rm Ad} B(\mathfrak m)=\mathfrak m$. The Lie algebra of $U_0$ is equal to $\mathfrak u_0$ defined by \eqref{kprolong}.

\subsection{Quasi-principal subbundle of $\widehat P_0$}
\label{quasi}
Take a fiber subbundle $P$ of $\widehat P_0$ which is not necessary a principle subbundle of $\widehat P_0$.
Let $P(t)$ be the fiber of $P$ over the point $\{\Lambda_i(t)\}_{i=-1}^{-\mu}$.
Take $\psi \in P(t)$.
The tangent space $T_\psi\bigl(P(t)\bigr)$ to the fiber $P(t)$ at
a point $\psi$ can be identified with a subspace of $\gl(V)$.
Indeed, define the following $\mathfrak g_+$-valued $1$-form $\omega$ on $P$: to any vector $X$ belonging to
$T_\psi\bigl(P(t)\bigr)$ we assign an element $\omega(\psi)(X)$ of
$\mathfrak g_+$
as follows: if $s\to \psi(s)$ is a smooth curve in $P(t)$ such that $\psi(0)=\psi$ and
$\psi'(0)=X$ then let
\begin{equation}
\label{omega}
\omega(\psi)(X)=\psi^{-1}\circ X,
\end{equation}
where in the last
formula by $\psi$ we mean the isomorphism
between $V$ and $W$.
Note that the linear map $\omega(\psi): T_\psi\bigl(P(t)\bigr)\mapsto \mathfrak g_+$ is injective.
Set

\begin{equation}
\label{Lpsi}
L_\psi:=\omega(T_\psi\bigl(P(t)\bigr).
\end{equation}

If $P$ is a principle bundle over our curve, which is a reduction of the bundle $\widehat P_0$, then the space $L_\psi$ is independent of $\psi$ and equal to the Lie algebra of the structure group of the bundle $P$. For our purposes here we need to consider more general class of fiber subbundles of $\widehat P_0$. To define this class recall that the  filtration given by the subspaces $V^i$, $-\mu\leq i\leq -1$, defined by
\eqref{filtV}, induces a natural filtration on $\gl(V)$
and, therefore, on the subspace $L_\psi$. The corresponding graded
subspace $L_\psi$ is called a \emph{symbol of the bundle $P$
at a point $\psi$}. Under the natural identification of spaces $\gr \gl(V)$ with $\gl(\gr V)$, described in the beginning of section
\ref{comptgrsec},
one has that $\gr L_\psi$ is a subspace of $\gl( \gr V)$.
Besides $V$ is a graded space by definition, i.e $V\sim \gr V$.
Therefore, $\gr L_\psi$ is a subspace of $\gl(V)$.

\begin{defn}
\label{constquasidef}
We say that the bundle $P$
has a \emph{constant symbol $\mathfrak s$} if its symbols at different points
coincide with $\mathfrak s$. In this case we call $P$ the \emph{quasi-principle subbundle of the bundle $\widehat P_0$ with symbol $\mathfrak s$.}
\end{defn}

\subsection{Structure function associated with Ehresmann connection}
\label{structsec}
Further, assume that a fiber subbundle $P$
of $\widehat P_0$ is endowed with an Ehresmann connection, i.e. with a rank 1 distribution $\mathcal H$ transversal to its fibers. 
A parametrized smooth curve $t\mapsto \psi(t)$ in $P$ is called a \emph{ moving frame of the pair $(P, \mathcal H)$} if the following two conditions holds:
\begin{enumerate}
\item
 the curve $t\mapsto \psi(t)$ is tangent  to the distribution $\mathcal H$ at any point;
\item
there exists $\delta\in\mathfrak m$ such that  $\bigl(\psi^{-1}\circ \psi'(t)\bigr)_{-1}=\delta$ for any $t$.
\end{enumerate}
A pair $(P,\mathcal H)$ will be called a \emph{bundle of moving frames}.

Let $C^1={\rm Hom}(\mathfrak m, \mathfrak g)$.
Also, given $A\in \mathfrak{gl}(V)$ denote by $(A)_k$ the component of degree $k$ of $A$ (w.r.t. the splitting
$\mathfrak{gl}(V)={\displaystyle\bigoplus_{k\in\mathbb Z}\mathfrak{gl}(V)_k}$).
 Then to any bundle $P$ with the fixed Ehresmann connection $\mathcal H$ one can assign the function $c:P\rightarrow C^1$ as follows: Given  $\psi\in P(t)$ and $\delta \in \mathfrak m$  let $\psi(\tau)$ be a curve in $P$ tangent to the Ehresmann connection $\mathcal H$ such that
$\psi(t)=\psi$ and $\bigl(\psi^{-1}\circ \psi'(t)\bigr)_{-1}=\delta$.
Then set

\begin{equation}
\label{structeq}
c(\psi)(\delta):=\psi^{-1}(t)\circ \psi'(t).
\end{equation}
Note that the righthand side of \eqref{structeq} does not depend on the choice of the curve $\psi(tau)$ with the aforementioned properties.
The function $c$ is called the \emph{structure function of the pair the bundle of moving frames $(P,\mathcal H)$}.
\begin{rem}
\label{Maurer}
\rm{Under the identification of the bundle $\widehat P$ with the Lie group $\mathcal G$ given in Remark \ref{homogenrem} one can describe the structure function $c(\psi)$ in terms of the \emph{left-invariant Maurer-Cartan form $\Omega$} of $\mathcal G$ by the following formula:
\begin{equation}
\label{structMaurer}
c(\psi)\Bigl(\bigl(\Omega(X)\bigr)_{-1}\Bigr)=\Omega(\psi)(X)\quad  \forall \psi\in P \text{ and }  X\in \mathcal H(\psi),
\end{equation}
where $\bigl(\Omega(X)\bigr)_{-1}$ is the degree $-1$ component of $\Omega(X)$ with respect to the grading on $\mathfrak g$.
Note also that the
$\mathfrak g_+$-valued $1$-form $\omega$ defined by \eqref{omega} is nothing but the restriction of the left-invariant Maurer-Cartan form to the fibers of the bundle $P$.} $\Box$
\end{rem}

Note that
\begin{equation}
\label{splitting}
C^1=\displaystyle{\bigoplus_{k\in \mathbb Z} C^1_k},
\end{equation}
where $C^1_k={\rm Hom}(\mathfrak m, \mathfrak \mg_{k-1})$.
Let $C^1_+:= \displaystyle{\bigoplus_{k>0} C^1_k}$, $c_k$ be the $k$th component of $c$ w.r.t. the splitting \eqref{splitting},
and $c_+=\sum_{k>0 0} c_k$. We say that  $c_+$ is  the \emph{positive part of the structure function of $c$}.
Note that by the constructions $c_k=0$ for $k<0$ and  $c_{0}(\delta)=\delta$ for every
$\delta\in \mathfrak m$.

Further, let $\partial:\mathfrak g\rightarrow C^1$ be the operator given  by the following formula:

\begin{equation}
\label{coboundary}
\partial x(\delta)=[\delta, x],\quad \forall x\in \mathfrak g, \delta\in \mathfrak m.
\end{equation}

\begin{rem}
\label{cohomrem}
\rm
{As a matter of fact, one can look on $\mathfrak g$ and $C^1$ as on the spaces of $0$-cochains and $1$-cochains, respectively, on $\mathfrak m$  with values in $\mathfrak g$. Moreover, the operator $\partial$ is exactly the coboundary operator associated with the
adjoint representation ${\rm ad}:\mathfrak m\rightarrow \mathfrak{gl}(\mg)$.}
\end{rem}

Recall also that the group $\mathcal G_+$ acts naturally on the space $C^1$ as follows:
\begin{equation}
\label{action}
(A.c)(\delta)=({\rm Ad} A) c\Bigl(\bigl(({\rm Ad} A^{-1})\delta\bigr)_{-1}\Bigr), \quad a\in\mathcal G_+, \,\delta\in \mathfrak m,
\end{equation}
where $\bigl(({\rm Ad} A^{-1})\delta\bigr)_{-1}$ is a degree $-1$ component of $({\rm Ad} A^{-1})\delta$ with respect to the grading of $\mg$.
Obviously, this action restricts to the action on $C^1_+$.

Finally, let $\mathfrak u_+(\mathfrak
m)=\displaystyle{\bigoplus_{k\geq 0}\mathfrak u_k}$ and let $U_+
(\mathfrak m)
$ be the subgroup of the group of symmetries of the curve $t\mapsto  \{e^{t\delta}
V^i\}_{i=-1,\ldots, -\mu}$ preserving the filtration  $\{V^i\}_{i=-\mu}^{-1}$. Note that  $\mathfrak u_+(\mathfrak m)$ is the Lie algebra of $U_+(\mathfrak m)$.

A subspace $\mathcal N$ complimentary to the space ${\rm Hom} (\mathfrak m, \mathfrak u_+(\mathfrak m))+({\rm Im}\,\partial\cap C^1_+)$ in $C^1_+$ is called a \emph{normalization condition}.
We say that \emph{an Ehresmann connection $\mathcal H$ on a subundle $P$ of $\widetilde P_0$  satisfies the normalization condition $\mathcal N$} if the positive part of the  structure function of the pair $(\mathcal P,\mathcal H)$ takes values in $\mathcal N$.

Our main theorem can be formulated as follows:
\begin{thm}
\label{mainthm}
Fix a normalization condition $\mathcal N$.
Then for a curve of flags \eqref{filt} with constant symbol with representative $\mathfrak m$
there exists a unique quasi-principal subbundle $P$
of the bundle $\widehat P_0$ with symbol $\mathfrak u_+(\mathfrak
m)$  and a unique Ehresmann connection $\mathcal H$ on it such that this connection satisfies the normalization conditions $\mathcal N$.
If, in addition,  the space $\mathcal N$ is invariant with respect to the natural action of the subgroup $U_+(\mathfrak m)$ on $C^1_+$,
then the bundle $P$ is the principal bundle with the structure group  $U_+(\mathfrak m)$.
\end{thm}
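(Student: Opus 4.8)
The plan is to imitate, degree by degree, the canonical frame construction for the flat curve, carrying out a Tanaka-type prolongation in which at each positive degree $k$ one simultaneously reduces the fibers of $\widehat P_0$ and pins down the connection so that the degree-$k$ component $c_k$ of the structure function is forced into $\mathcal N_k:=\mathcal N\cap C^1_k$. The whole argument rests on two purely linear-algebraic facts about the coboundary operator $\partial$ of \eqref{coboundary}. First, $\partial$ is homogeneous of degree $0$, so $\partial(\mathfrak g_k)\subset C^1_k$ and ${\rm Im}\,\partial\cap C^1_k=\partial(\mathfrak g_k)$. Second, since $\partial x(\delta)=[\delta,x]=-[x,\delta]$, definition \eqref{kprolong} rereads as
\begin{equation*}
\mathfrak u_k=\partial^{-1}\bigl(\Hom(\mathfrak m,\mathfrak u_{k-1})\bigr),\qquad \ker\bigl(\partial|_{\mathfrak g_k}\bigr)\subset \mathfrak u_k .
\end{equation*}
Choosing a complement $\mathfrak c_k$ with $\mathfrak g_k=\mathfrak u_k\oplus\mathfrak c_k$, these facts give that $\partial|_{\mathfrak c_k}$ is injective and $\partial(\mathfrak c_k)\cap\Hom(\mathfrak m,\mathfrak u_{k-1})=0$, so the subtracted subspace of the normalization splits as $\Hom(\mathfrak m,\mathfrak u_{k-1})+\partial(\mathfrak g_k)=\Hom(\mathfrak m,\mathfrak u_{k-1})\oplus\partial(\mathfrak c_k)$ and hence $C^1_k=\Hom(\mathfrak m,\mathfrak u_{k-1})\oplus\partial(\mathfrak c_k)\oplus\mathcal N_k$.

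Next I would compute how $c$ transforms. Under a fiber gauge by $A=\exp X$ with $X\in\mathfrak g_k$ (fixed along the curve) formula \eqref{action} shows, using that $\bigl(({\rm Ad}\,A^{-1})\delta\bigr)_{-1}=\delta$ because all higher brackets have nonnegative degree, that the degree-$k$ part of $c$ changes by the leading term $-\partial X$; when the reduction varies along the curve, or when one changes the transversal line defining $\mathcal H$, there is in addition a Maurer--Cartan derivative term (cf.\ Remark \ref{Maurer}) taking values in the graded vertical algebra, i.e.\ in $\Hom(\mathfrak m,\mathfrak u_+)$. Thus at degree $k$: reducing the fiber along $\mathfrak c_k$ shifts $c_k$ by $\partial(\mathfrak c_k)$, and adjusting the connection within $\mathfrak u_+$ shifts $c_k$ by $\Hom(\mathfrak m,\mathfrak u_{k-1})$; together these exactly span the subtracted subspace, so by the splitting above there is a \emph{unique} such adjustment bringing $c_k$ into $\mathcal N_k$. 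The $\mathfrak c_k$-reduction is rigid because $\partial|_{\mathfrak c_k}$ is injective, so it cuts the degree-$k$ structure group down to $\mathfrak u_k$; gauging by the residual $\mathfrak u_k$ only moves $c_k$ inside $\Hom(\mathfrak m,\mathfrak u_{k-1})$, i.e.\ within the connection-absorbable part, and so does not spoil the normal form.

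Running this from $k=1$ up to $k=\mu-1$ (recall $\mathfrak u_k=0=\mathfrak{gl}(V)_k$ for $k\ge\mu$) produces a decreasing chain of fiber subbundles of $\widehat P_0$ terminating in a bundle $P$ whose graded fiber is $\bigoplus_{k\ge0}\mathfrak u_k=\mathfrak u_+(\mathfrak m)$, that is, a quasi-principal subbundle with symbol $\mathfrak u_+(\mathfrak m)$ in the sense of Definition \ref{constquasidef}, equipped with a unique Ehresmann connection $\mathcal H$ whose positive structure function lands in $\mathcal N$. The main obstacle is bookkeeping the higher-degree corrections: the transformation of $c$ mixes degrees, so once degrees $<k$ have been normalized one must verify that the degree-$k$ normalization is governed solely by the degree-$k$ linear data above, with every higher-order contribution absorbed at later steps and never feeding back into what has already been fixed; establishing this decoupling (and, along the way, the constancy along the curve of the symbol, so that $P$ is genuinely quasi-principal) is where the real work lies.

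Finally, suppose $\mathcal N$ is invariant under the action \eqref{action} of $U_+(\mathfrak m)$. For $\psi\in P(t)$ and $g\in U_+(\mathfrak m)$ the reframed $\psi\circ g$ has structure function $g.c(\psi)$, which by invariance again takes values in $\mathcal N$; by the uniqueness just established $\psi\circ g\in P(t)$ and $\mathcal H$ is carried to itself. Hence each fiber $P(t)$ is stable under the right $U_+(\mathfrak m)$-action, and since its dimension already equals $\dim\mathfrak u_+(\mathfrak m)$ by the symbol computation, $P(t)$ is a single $U_+(\mathfrak m)$-orbit. Therefore $P$ is a principal $U_+(\mathfrak m)$-bundle and $\mathcal H$ a principal connection, as claimed.
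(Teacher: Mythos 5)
Your proposal follows essentially the same route as the paper's proof: the same three-way splitting $C^1_k=\Hom(\mathfrak m,\mathfrak u_{k-1})\oplus\partial(\mathfrak c_k)\oplus\mathcal N_k$ (which the paper packages as the two splittings \eqref{normspliti} and $({\rm pr}_{k})^{-1}(\overline{\mathcal N}_{k})=\mathcal N_{k}\oplus{\rm Hom}(\mathfrak m,\mathfrak u_{k-1})$), the same degree-by-degree reduction of the fiber along a complement to $\mathfrak u_k$ followed by absorption of the residual $\Hom(\mathfrak m,\mathfrak u_{k-1})$-ambiguity into the Ehresmann connection, and the same invariance argument for the principal-bundle case. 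The ``decoupling'' you defer as the real work is exactly the content of Lemma \ref{lem0} and its higher-degree analog \eqref{structransi}: once $\Pi_{k-1}^t(\psi_1)=\Pi_{k-1}^t(\psi_2)$, the degree-$k$ parts of $\bar c$ differ precisely by $\bar\partial_k\bigl(\psi_1^{-1}\circ\psi_2\bigr)_k$, the derivative of the gauge parameter contributing only in degrees $>k$, just as your degree count predicts.
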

The pair $(P, \mathcal H)$ from the previous theorem is called \emph{the bundle of moving frames canonically associated with the curve \eqref{filt} via the normalization condition $\mathcal N$} and the Ehresmann connection $\mathcal H$ is called  \emph{the canonical Ehresmann connecton on $P$ associated with the curve \eqref{filt} via the normalization condition $\mathcal N$}.

Theorem \ref{mainthm} is proved in the next section.
As a direct consequence, we have that two curves $\{\Lambda_i\}_{i=-\mu}^{-1}$ and $\{\widetilde \Lambda_i\}_{i=-\mu}^{-1}$ of flags with constant symbol $\mathfrak m$ are equivalent through $A\in G$ if and only if for any moving frame $t\mapsto \psi(t)$ of the pair (the bundle $P$, the Ehresmann connection $\mathcal H$)  canonically associated with the curve $\{\Lambda_i\}_{i=-\mu}^{-1}$ via $\mathcal N$ the curve  $t\mapsto A\circ\psi(t)$ is the moving frame of the pair (the bundle $\widetilde P$, the Ehresmann connection $\widetilde{\mathcal H}$) canonically associated with the curve $\{\widetilde\Lambda_i\}_{i=-\mu}^{-1}$ via $\mathcal N$.

\begin{rem}
\label{princrem}
{\rm Note that in the case when the normalization condition $\mathcal N$ is invariant with respect to the natural action of the subgroup $U_+(\mathfrak m)$ on $C^1_+$, the canonical Ehresmann connection is not a principal connection on the corresponding
$U_+(\mathfrak m)-$ principal bundle in general. It is a principal connection if the first algebraic prolongation $\mathfrak u_1$ is equal to $0$.}
\end{rem}


%

\subsection {On geometry of parametrized curves of flags.}
\label{param}
 Similar results can be obtained for equivalence problem for \emph{parametrized} curves of flags with respect to the action of group $G$. There are only three modifications. The first modification is that the symbol of a parametrized  curve is an orbit of an element (and not a line) in $\mathfrak g_{-1}$ with respect to the adjoint action of $\mathcal G_+$. Then we fix a representative $\delta$
of this orbit. The second modification is in the definition of the algebra $\mathfrak u_0$ and of the bundle $\widehat P_0$. Let $\delta\in\mathfrak g_{-1}$.
Set $\mathfrak u_{-1}=\mathbb K\delta$ and
\begin{equation}
\label{prolong0par}
\mathfrak u_0:=\{X\in\mathfrak g_0:[X,\delta]=0\}.
\end{equation}
The spaces $\mathfrak u_k$ for $k>0$ are defined recursively, using \eqref{kprolong}.
The third modification is in the definition of the bundle $\widehat P_0$ (the zero degree reduction of the bundle $P|_{\Lambda(\cdot)}$). Here $\widehat P_0$ should be the subbundle of $\widehat P|_{\Lambda(\cdot)}$
 with the fiber $\widehat P_0(t)$ over the point $\{\Lambda_i(t)\}_{i=-\mu}^{ -1}$ consisting of all $A\in \widehat P_{\Lambda(t)}$
such that
$$\mathfrak \delta=\bigl(A^{-1}\circ\delta_t\circ
A\bigr)_{-1}.$$

With these modifications all other constructions and Theorem \ref{mainthm} are valid.
\subsection{On geometry of submanifolds of constant type in flag varieties}
\label{subman}
Similar theory can be constructed for equivalence problem for submanifolds of dimension $l$ of $\mathcal O$ with respect to the action of the group $G$.
Again we consider submanifolds compatible with respect to differentiation. A submanifold is compatible with respect to differentiation if any smooth curve on it is compatible with respect to differentiation.  A symbol of an $l$-dimensional submanifold $S\subset\mathcal O$ at a point $\Lambda$ is an orbit of an $l$-dimensional abelian subalgebra $\mathfrak m$ (of $\mg$) belonging to $\mg_{-1}$ with respect to the natural action of $\mathcal G_+$ on $l$-dimensional subspaces of of $\mg_{-1}$. This subalgebra can be taken as a linear span of all elements of type $\bigl(A^{-1}\circ\delta_t\circ
A\bigr)_{-1}$, where $A$ is a fixed element of the fiber $\widetilde P_\Lambda$ and $\delta_t$ are elements of $\gr_\Lambda W$ corresponding to all possible smooth curves in $S$, passing through $\Lambda$. Note that the word "abelian" can be removed, because any subalgebra of $\mg$ that belongs to $\mg_{-1}$ must be abelian.

Further,  one can define the bundle $\widehat P_0$ over the submanifold $S$ as in subsection \ref{zero}, the notion of a quasi-principal subbundle of the bundle $\widehat P_0$ as in subsection \ref{quasi}, an Ehresmann connection of $P$ i.e. a rank $l$ distribution $\mathcal H$ transversal to its fibers, and a moving frame of the pair $(P,\mathcal H)$ as in subsection \ref{structsec}.
The structure function of the  $(P,\mathcal H)$  can be defined by the relation \eqref{structMaurer}. Note that since $\mathfrak m$ is abelian, the structure function  at any point $\psi\in P$ takes its values in the space $Z^1$ of $1$-cocycles (with respect to the coboundary operator associated with the
adjoint representation ${\rm ad}:\mathfrak m\rightarrow \mathfrak{gl}(\mg)$, see Remark \ref{cohomrem}). Therefore, to get the generalization of Theorem \ref{mainthm} to submanifolds with a constant symbol in flag varieties (with literally the same proof as in section \ref{proofsec} below)  one has to replace $C^1_+$ by $Z^1_+$ in the formulation of this theorem, where $Z^1_+$ is the subspace of cocycles in $C^1_+$. It is clear that in the case of curves
$Z^1=C^1$.

Finally note that one can consider  more general situation of filtered submanifolds of constant type in flag variety, which is encoded by
fixing a graded subalgebra in a negative part of $\mathfrak g$ (not necessary belonging to $\mathfrak g_-1$. This point of view is considered in the recent preprint \cite{doubmor}.

\section{Proof of Theorem \ref{mainthm}}
\label{proofsec}
\setcounter{equation}{0}
\setcounter{thm}{0}
\setcounter{lem}{0}
\setcounter{prop}{0}
\setcounter{cor}{0}
\setcounter{rem}{0}
\setcounter{exm}{0}

We construct  a decreasing sequence of fiber subbundles of the bundle $\widehat P_0$ by induction.



First let us introduce some notations.
If $\mathcal N$ is the space  from the formulation of Theorem \ref{mainthm}, denote
\begin{equation}
\label{Nk}
\mathcal N_k=\mathcal N\cap C^1_k,\quad k\in\mathbb N
\end{equation}
Then obviously $\mathcal N=\displaystyle{\bigoplus_{k\in\mathbb N}\mathcal N_k}$.

Note also that there is a natural mappings
${\rm pr}_k: C^1_k\mathfrak\rightarrow {\rm Hom}(\mathfrak m, \mathfrak g_{k-1}/\mathfrak u_{k-1})$ such that for any $c_k\in C^1_k$ and $\delta\in\mathfrak m$ we set ${\rm pr}_k(c_k)(\delta)$ to be the equivalence class of $c_k(\delta)$ in  $\mathfrak g_{k-1}/\mathfrak u_{k-1}$. We assume that
\begin{equation}
\label{barNk}
\overline{\mathcal N}_k={\rm pr}_k(\mathcal N_k).
\end{equation}

Further, let $\mathfrak g_k$ be as in \eqref{degreek}. Then the image of the restriction of the operator $\partial$ to the subspace $\mathfrak g_k$ belongs to $C^1_{k}$. We denote this restriction by $\partial_k$, $\partial_k: \mathfrak g_k\rightarrow C^1_{k}$. Finally, let
\begin{equation}
\label{barpartial}
\bar\partial_k={\rm pr}_{k}\circ\partial_k.
\end{equation}
Note that by constructions the space $\mathfrak u_k$ defined by \eqref{kprolong} is nothing but the kernel of the map $\bar\partial_k$,
$\mathfrak u_k=\ker \bar\partial_k$.

\subsection {First degree normalization.}
First take some Ehresmann connection on the bundle $\widehat P_0$ and let $c_1$ be the degree $1$ component of the structure function of the bundle $\widehat P_0$ with this Ehresmann connection. If one takes another Ehresmann connection on $\widehat P_0$ and the corresponding degree $1$ component $\tilde c_1$ of the structure function of $\widehat P_0$ with this new Ehresmann connection, then

\begin{equation}
\label{free1}
 \tilde c_1(\psi)(\delta)-c_1(\psi)(\delta)\in\mathfrak u_0 \quad \forall \psi\in \widehat P_0 \text{ and } \delta\in \mathfrak m
 \end{equation}
 In other words, the map $\bar c_1:\widehat P_0\rightarrow {\rm Hom}(\mathfrak m, \mathfrak g_0/\mathfrak u_0)$,
defined by  $\bar c_1(\psi)={\rm pr}_1\circ c_1 (\psi)$,
is independent  of a choice of an Ehresmann connection on $\widetilde P_0$.

Now take  $\psi_1$ and $\psi_2$ from the same fiber $\widehat P_0(t)$  such that $\Pi_0^t(\psi_1)=\Pi_0^t(\psi_2)$ and
study how $\bar c_1(\psi_1)$ and $\bar c_1(\psi_2)$ are related.
As before given $A\in \mathfrak{gl}(V)$ denote by $(A)_k$ the component of degree $k$ of $A$ (w.r.t. the splitting
$\mathfrak{gl}(V)={\displaystyle\bigoplus_{k\in\mathbb Z}\mathfrak{gl}(V)_k}$). Set
\begin{equation}
\label{f}
f_{\psi_1,\psi_2}^1:=\bigl(\psi_1^{-1}\circ \psi_2\bigr)_1.
\end{equation}

It is not hard to see that  $f_{\hat\vf,\hat\vf_1}\in\mathfrak g_1$, where $\mathfrak g_1$ is defined by \eqref{degreek}.
In opposite direction, for fixed $\psi_1 \in \widehat P_0(t)$ and any $f\in \mathfrak g_1$ there exists   $\psi_2\in\widehat P_0(t)$ such that $\Pi_0^t(\psi_1)=\Pi_0^t(\psi_2)$ and $f=f_{\psi_1,\psi_2}^1$.

\begin{lem}
\label{lem0}
The following identity holds
\begin{equation}
\label{structrans}
\bar c_1(\psi_2)=\bar c_1(\psi_1)+\bar\partial_1 (f_{\psi_1,\psi_2}^1).
\end{equation}
\end{lem}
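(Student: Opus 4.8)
The plan is to reduce the identity to a direct computation of the left logarithmic derivative $\psi^{-1}\circ\psi'$ along two conveniently chosen admissible curves, one through $\psi_1$ and one through $\psi_2$, and to read off the degree $0$ component. First I would record the algebraic meaning of the hypothesis $\Pi_0^t(\psi_1)=\Pi_0^t(\psi_2)$: writing $g:=\psi_1^{-1}\circ\psi_2$, both $\psi_1,\psi_2$ preserve the filtration $\{V^i\}$ and carry $V_i$ into $\Lambda_i(t)$ with the same residue modulo $\Lambda_{i+1}(t)$, so $g\in\mathcal G_+$ has trivial degree $0$ component, $(g)_0={\rm Id}$. Hence $g$ lies in the unipotent subgroup $\exp\big(\bigoplus_{k\ge 1}\mathfrak g_k\big)$, and $g=\exp\xi$ with $\xi\in\bigoplus_{k\ge1}\mathfrak g_k$ and $(\xi)_1=(g)_1=f^1_{\psi_1,\psi_2}$.

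Since $\bar c_1={\rm pr}_1\circ c_1$ is independent of the choice of Ehresmann connection (see \eqref{free1}), I may evaluate $\bar c_1(\psi_1)$ and $\bar c_1(\psi_2)$ along any curves in $\widehat P_0$ that project to the base curve with degree $-1$ velocity equal to a fixed $\delta\in\mathfrak m$; any such curve is transversal to the fibers and hence tangent to some Ehresmann connection, so the ${\rm pr}_1$-projection of $(\psi^{-1}\circ\psi'(t))_0$ computed along it is exactly $\bar c_1(\psi)(\delta)$. Take an admissible curve $\psi_1(\tau)$ through $\psi_1$ and put $X:=\psi_1^{-1}\circ\psi_1'(t)$, so that $(X)_{-1}=\delta$ and $(X)_0=c_1(\psi_1)(\delta)$; note $X$ has no component of degree below $-1$, as $c_k=0$ for $k<0$. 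I then propose the curve $\psi_2(\tau):=\psi_1(\tau)\circ g(\tau)$, where $g(\tau)=\exp\xi(\tau)$, $\xi(\tau)\in\bigoplus_{k\ge1}\mathfrak g_k$ and $g(t)=g$. One checks that $\psi_2(\tau)\in\widehat P_0$: it projects to $\Lambda(\tau)$ because $g(\tau)$ preserves $\{V^i\}$, and it preserves the symbol because ${\rm Ad}(g(\tau)^{-1})$ fixes the lowest degree component, whence $\big(\psi_2(\tau)^{-1}\circ\delta_\tau\circ\psi_2(\tau)\big)_{-1}=\big(\psi_1(\tau)^{-1}\circ\delta_\tau\circ\psi_1(\tau)\big)_{-1}$; for the same reason $(\psi_2^{-1}\circ\psi_2')_{-1}=\delta$, so $\psi_2(\tau)$ is admissible through $\psi_2$.

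The computation itself is then the identity
\[
\psi_2^{-1}\circ\psi_2'={\rm Ad}(g^{-1})\big(\psi_1^{-1}\circ\psi_1'\big)+g^{-1}\circ g',
\]
evaluated at $\tau=t$ and projected to degree $0$. Since $g^{-1}\circ g'$ takes values in $\bigoplus_{k\ge1}\mathfrak g_k$ it contributes nothing in degree $0$; and expanding ${\rm Ad}(g^{-1})=\exp(-{\rm ad}\,\xi)$, the only degree $0$ term beyond $(X)_0$ comes from $-[\xi_1,X_{-1}]=-[f^1,\delta]=[\delta,f^1]$, because $X$ has no component of degree below $-1$. Thus $c_1(\psi_2)(\delta)=(X)_0+[\delta,f^1]=c_1(\psi_1)(\delta)+\partial_1 f^1(\delta)$; applying ${\rm pr}_1$ and ranging over $\delta\in\mathfrak m$ yields \eqref{structrans}. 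The main point to handle with care is the admissibility of the auxiliary curve $\psi_2(\tau)$ together with the legitimacy of computing $\bar c_1(\psi_2)$ along it — which is precisely where the connection-independence of $\bar c_1$ is used — while the remaining degree bookkeeping (that $g^{-1}\circ g'$ is purely positive and that the adjoint correction collapses to $[\delta,f^1]$) is routine.
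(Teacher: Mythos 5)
Your argument is correct and is in substance the same as the paper's: both proofs compare the logarithmic derivatives $\psi_2^{-1}\circ\psi_2'$ and $\psi_1^{-1}\circ\psi_1'$ along sections differing by a positive-degree element $g=\psi_1^{-1}\circ\psi_2$ with $(g)_1=f^1_{\psi_1,\psi_2}$, and isolate the degree-zero correction $-f^1\circ\delta+\delta\circ f^1=[\delta,f^1]=\partial_1 f^1(\delta)$, modulo $\mathfrak u_0$. Your packaging via $\psi_2(\tau)=\psi_1(\tau)\circ g(\tau)$ and the product formula $\psi_2^{-1}\psi_2'={\rm Ad}(g^{-1})(\psi_1^{-1}\psi_1')+g^{-1}g'$ is merely a cleaner way of organizing the same degree bookkeeping that the paper carries out by expanding $\psi_2^{-1}\circ\psi_1={\rm Id}-f^1+(\text{degree}\ge 2)$.
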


\begin{proof}
First note that

\begin{equation}
\label{fhalf}
f_{\psi_1,\psi_2}^1=\bigl(\Pi_0^t(\psi_1)\bigr)^{-1}\circ \bigl(\Pi_1^t(\psi_2)-\Pi_1^t(\psi_1)\bigr).
\end{equation}
or, equivalently,
\begin{equation}
\label{f1}
\psi_2(v)=\psi_1(v)+\Pi_0^t(\psi_1)\circ f_{\psi_1,\psi_2}(v)\,\,{\rm mod}\, \Lambda_{i+2}(t) \quad \forall v\in V_i.
\end{equation}

Let $\psi_i(\tau)$, $i=1,2$ be a section of $\widehat P_0$  such that $\psi_i(t)=\psi_i$ and
\begin{equation}
\label{psidelta}
\bigl(\psi_i^{-1}\circ \psi_i'(t)\bigr)_{-1}=\delta
\end{equation}
Then one can write the identity analogous to \eqref{f1} for any $\tau$. Differentiating it at $\tau=t$ and using \eqref{psidelta} again we get
\begin{equation}
\label{f2}
\psi_2'(t)(v)=\psi_1'(t)(v)+\Pi_0^t(\psi_1)\circ\delta\circ f^1_{\psi_1,\psi_2}(v)\,\,{\rm mod}\, \Lambda_{i+1}(t) \quad \forall v\in V_i.
\end{equation}
Applying $\psi_2^{-1}$ to both sides and using \eqref{psidelta} again one gets
\begin{equation}
\label{f3}
\psi_2^{-1}\circ\psi_2'(t)(v)=\psi_2^{-1}\circ\psi_1'(t)(v)+\delta\circ f^1_{\psi_1,\psi_2}(v)\,\,{\rm mod}\, \Lambda_{i+1}(t) \quad \forall v\in V_i.
\end{equation}
Representing $\psi_2^{-1}\circ\psi_1'(t)$ as $\psi_2^{-1}\circ \psi_1\circ \psi_1^{-1}\circ\psi_1'(t)$ and taking into account that
\begin{equation*}
\begin{split}
~&\psi_2^{-1}\circ \psi_1=Id-f^1_{\psi_1,\psi_2}+\text{operators of degree $\geq 2$},\\
~&\psi_1^{-1}\circ\psi_1'(t)\equiv \delta+\bar c_1(\psi_1)+\text{operators of degree $\geq 1$}\quad {\rm mod}\,\, \mathfrak u_0.
\end{split}
\end{equation*}
we get that
$$\bigl(\psi_2^{-1}\circ\psi_1'(t)\Bigr)_0=\bar c_1(\psi_1)-f^1_{\psi_1,\psi_2}\circ \delta \quad {\rm mod}\,\, \mathfrak u_0.$$
Finally, comparing $0$-degree components of identity \eqref{f3} we get the required identity  \eqref{structrans}.
%
%
\end{proof}

Now take the space $\mathcal N_1$ as in \eqref{Nk}. By our assumptions on $\mathcal N$ we have

\begin{equation}
\label{normsplit}
{\rm Hom}(\mathfrak m, \mathfrak g_0/\mathfrak u_0)= \text{Im} \,\bar\partial_1\oplus\overline {\mathcal N}_1.
 \end{equation}

 Let $\widehat P_1$ be the subbundle of the bundle $\widehat P_0$ consisting of all $\psi\in \widehat P_0$ such that
 $\bar c_1(\psi)\in \overline{\mathcal N}_1$. From formula \eqref{structrans} and splitting \eqref{normsplit} it follows that the bundle $\widehat P_1$ is not empty. Moreover, $\psi_1$ and $\psi_2$ from the same fiber $\widehat P_1(t)$  satisfy $\Pi_0^t(\psi_1)=\Pi_0^t(\psi_2)$ if and only if $f_{\psi_1, \psi_2}^1\in \ker \bar\partial_1=\mathfrak u_1$.

 We also denote by $P_1$ the bundle over our curve with the fiber $P_1(t)$ over the point $\{\Lambda_i(t)\}_{i=-\mu}^{ -1}$ equal to the image of the  corresponding fiber $\widehat P_1(t)$ of the bundle $\widehat P_1$ under the map $\Pi_1^t$. By above $P_1$ can be considered as an affine bundle over $P_0$ such that its fibers are affine spaces over the vector space $\mathfrak u_1$. The projection of the bundle is induced by the maps $\Pi_0^t$.

 Note that in the constructions of the bundles $\widehat P_1$ and $P_1$ we did not use the whole information on the space $\mathcal N_1$, but on its image under the (non-injective) map ${\rm pr}_1$ only. So, from the knowledge of the whole space $\mathcal N_1$ we should get additional restrictions on the bundle of moving frame we are looking for.
Indeed, by our assumptions on $\mathcal N$ we have the following splitting
\begin{equation}
({\rm pr}_1)^{-1}(\overline{\mathcal N}_1)=\mathcal N_1\oplus{\rm Hom}(\mathfrak m, \mathfrak u_0).
\end{equation}
From this splitting and relation \eqref{free1} it follows that one can choose an Ehresmann connection  on $\widehat P_1$ such that
the degree $1$ component $c_1$ of the structure equation of $\widehat P_1$ with this Ehresmann connection belongs to $\mathcal N_1$. Moreover, any such Ehresmann connection on $\widehat P_1$ induces the unique Ehresmann connection on the bundle $P_0$.

\subsection {Higher degree normalizations: the induction step}
 Given $i>0$ assume that there exists a unique series of bundles $\widehat P_k$, $1\leq k\leq i$ over our curve such that $\widehat P_0$ is as above and for each $1\leq k\leq i$ the bundle $\widehat P_k$ satisfies the following properties:
 \begin{enumerate}
 \item [{\bf (A1)}]$\widehat P_k$ is a subbundle of $\widehat P_{k-1}$;
 \item [{\bf (A2)}]if $c$ is the structure function of $\widehat P^k$ (endowed with some Ehresmann connection)  and $c_l$ is its degree $l$ component, then
\begin{equation}
\label{widePk}
{\rm pr}_l\circ c_l \in\overline{\mathcal N}_l \quad \forall\, 1\leq l\leq k
\end{equation}
\item [{\bf (A3)}] if $P_k$, $0\leq k\leq i$,  is the bundle over our curve with the fiber $P_k(t)$ over the point $\{\Lambda_j(t)\}_{j=-\mu}^{ -1}$ equal to the image of the  corresponding fiber $\widehat P_k(t)$ of the bundle $\widehat P_k$ under the map $\Pi_k^t$, then $P_k$ can be considered as an affine bundle over $P_{k-1}$ such that its fibers are affine spaces over the vector space $\mathfrak u_k$. The projection of this bundle is induced by the maps $\Pi_{k-1}^t$;
\item [{\bf (A4)}]   there exists an Ehresmann connection on the bundle $\widehat P_k$ such that the structure function $c$ satisfies
\begin{equation}
\label{Ehresk}
c_l \in\mathcal N_l\quad \forall\, 1\leq l\leq k.
\end{equation}
Moreover, any such Ehresmann connection on $\widehat P_k$ induces the unique Ehresmann connection on the bundle $P_{k-1}$.
\end{enumerate}

Now let us construct the subbundle $\widehat P_{i+1}$ of $\widehat P_i$, corresponding to the normalization of degree $i+1$.
For this first take some Ehresmann connection on $\widehat P_i$ satisfying \eqref{Ehresk} (with $k=i$) and let $c_{i+1}$ be the degree $i+1$ component of the structure function of the bundle $\widehat P_i$ with this Ehresmann connection. If one takes another Ehresmann connection on $\widehat P_i$ satisfying \eqref{Ehresk} (with $k=i$) and the corresponding degree $i+1$ component $\tilde c_{i+1}$ of the structure function of $\widehat P_i$ with this new Ehresmann connection, then

\begin{equation}
\label{freei}
 \tilde c_{i+1}(\psi)(\delta)-c_{i+1}(\psi)(\delta)\in\mathfrak u_i \quad \forall \psi\in \widehat P_i \text{ and } \delta\in \mathfrak m.
 \end{equation}
 In other words, the map $\bar c_{i+1}:\widehat P_i\rightarrow {\rm Hom}(\mathfrak m, \mathfrak g_i/\mathfrak u_i)$,
defined by  $\bar c_{i+1}(\psi)={\rm pr}_{i+1}\circ c_{i+1} (\psi)$,
is independent  of a choice of an Ehresmann connection on $\widetilde P_i$.

Now take  $\psi_1$ and $\psi_2$ from the same fiber $\widehat P_i(t)$ of the bundle $P_i$ such that $\Pi_i^t(\psi_1)=\Pi_i^t(\psi_2)$ and
study how $\bar c_{i+1}(\psi_1)$ and $\bar c_{i+1}(\psi_2)$ are related.
Set
\begin{equation}
\label{fi}
f_{\psi_1,\psi_2}^{i+1}:=\bigl(\psi_1^{-1}\circ \psi_2\bigr)_{i+1}.
\end{equation}

It is not hard to see that  $f_{\hat\vf,\hat\vf_{i+1}}\in\mathfrak g_{i+1}$, where $\mathfrak g_{i+1}$ is defined by \eqref{degreek}.
In opposite direction, for fixed $\psi_1 \in \widehat P_i(t)$ and any $f\in \mathfrak g_{i+1}$ there exists   $\psi_2\in\widehat P_i(t)$ such that $\Pi_i^t(\psi_1)=\Pi_i^t(\psi_2)$ and$f=f_{\psi_1,\psi_2}^1$. By the complete analogy
with the proof of Lemma \ref{lem0} one gets the following identity
\begin{equation}
\label{structransi}
\bar c_{i+1}(\psi_2)=\bar c_{i+1}(\psi_1)+\bar\partial_{i+1} (f_{\psi_1,\psi_2}^{i+1}).
\end{equation}

Further take the space $\mathcal N_{i+1}$ as in \eqref{Nk}. By our assumptions on $\mathcal N$ we have

\begin{equation}
\label{normspliti}
{\rm Hom}(\mathfrak m, \mathfrak g_i/\mathfrak u_i)= \text{Im} \,\bar\partial_{i+1}\oplus\overline {\mathcal N}_{i+1}.
 \end{equation}

 Let $\widehat P_{i+1}$ be the subbundle of the bundle $\widehat P_i$ consisting of all $\psi\in \widehat P_i$ such that
 $\bar c_{i+1}(\psi)\in \overline{\mathcal N}_{i+1}$. From formula \eqref{structransi} and splitting \eqref{normspliti} it follows that the $\widehat P_{i+1}$ is not empty. Moreover, $\psi_1$ and $\psi_2$ from to the same fiber $\widehat P_{i+1}(t)$  satisfy $\Pi_i^t(\psi_1)=\Pi_i^t(\psi_2)$ if and only if $f_{\psi_1, \psi_2}^{i+1}\in \ker \bar\partial_i=\mathfrak u_{i+1}$.

 We also denote by $P_{i+1}$ the bundle over our curve with the fiber $P_{i+1}(t)$ over the point $\{\Lambda_j(t)\}_{j=-\mu}^{ -1}$ equal to the image of the  corresponding fiber $\widehat P_{i+1}(t)$ of the bundle $\widehat P_{i+1}$ under the map $\Pi_{i+1}^t$. By above $P_{i+1}$ can be considered as an affine bundle over $P_i$ such that its fibers are affine spaces over the vector space $\mathfrak u_{i+1}$. The projection of the bundle is induced by the maps $\Pi_i^t$. So, the bundles $\widehat P_{i+1}$ and $P_{i+1}$ satisfy conditions (A1)-(A3) above (for $k=i+1$).

To construct an Ehresmann connection $\widehat P_{i+1}$, satisfying condition (A4), consider
the
splitting
\begin{equation}
({\rm pr}_{i+1})^{-1}(\overline{\mathcal N}_{i+1})=\mathcal N_{i+1}\oplus{\rm Hom}(\mathfrak m, \mathfrak u_i).
\end{equation}
From this splitting and relation \eqref{freei} it follows that one can choose an Ehresmann connection  on $\widehat P_{i+1}$ such that
the degree i+1 component $c_{i+1}$ of the structure equation of $\widehat P_{i+1}$ with this Ehresmann connection belongs to $\mathcal N_{i+1}$. Moreover, any such Ehresmann connection on $\widehat P_{i+1}$ induces the unique Ehresmann connection on the bundle $P_i$.
This completes the induction step.

Since $\mathfrak u_k=0$ for $k\geq \mu$ and the maps $\Pi_i^t$ are identities for $i\geq \mu-1$, the sequence of bundles
$\{\widehat P_i\}_{i\geq 0}$ stabilizes from $i=\mu-1$, i.e. $\widehat P_i=\widehat P_{i+1}$ for $i\geq\mu-1$, and also $P_i=\widehat P_i$ for $i\geq\mu-1$.
Moreover, the bundle $P_\mu$ ($=\widehat P_\mu$) is endowed with the unique Ehresmann connection such that its structure function satisfies \eqref{Ehresk} with $k=\mu$.

We claim that the bundle $P_\mu$ with the constructed Ehresmann connection is the bundle we are looking for in Theorem \ref{mainthm}.
By construction the positive part of the structure  function  of $P_\mu$ takes values in $\mathcal N$.
Let us check that the bundle $P_\mu$ has symbol $\mathfrak u_+(\mathfrak m)$. By the previous constructions we have the sequence of the bundles $\{P_i\}_{i=0}^\mu$  such that $P_{i+1}$ is a bundle over $P_i$. Hence, $P_\mu$ can be considered as a bundle over each $P_i$ with $0\leq i<\mu$. Take $\psi\in P_\mu$ and consider the fibers of $P_\mu$ considered as a bundle of $P_i$ for each $0\leq i<\mu$. The tangent spaces to this fibers at $\psi$ define the filtration on $T_\psi P_\mu(t)$. Let $\omega$ and $L_\psi$ be as in \eqref{omega} and \eqref{Lpsi} respectively. The images of the spaces of the filtration on $T_\psi P_\mu(t)$ under the map $\omega(\psi)$
define the filtration on the space $L_\psi$ . By our constructions this filtration coincides with the filtration induced on $L_\psi$ from $\gl(V)$. Moreover, by property (A3) the graded space $\gr L_\psi$ of $L_\psi$ coincides with $\mathfrak u_+(\mathfrak m)$. In other words, the bundle $P_\mu$ has constant symbol  $\mathfrak u_+(\mathfrak m)$. So, $P_\mu$ satisfies all conditions of our theorem.
Moreover, all conditions we imposed on the structure function of $P_\mu$ during the proof  were necessary,
which proves the uniqueness of the constructed bundle and completes the proof of the first part of our theorem.

Note that by our constructions the bundle $P_\mu$ (without a specified Ehresmann connection on it) can be uniquely
determined by the subspace $\overline {\mathcal N}=\displaystyle{\bigoplus_{k>0}}\overline{\mathcal N_k}$ of $\Hom\bigl(\mathfrak m, \mathfrak g_+/\mathfrak u_+(\mathfrak m)\bigr)$: the bundle $P_\mu$ consists of all $\psi\in \widehat P_0$ such
that $\bar c_i(\psi)\in\overline{\mathcal N}_i$ for all $1\leq i\leq \mu$. The natural
action of $U_+(\mathfrak m)$ on $C^1_+$ defined by \eqref{action} induces the action on  $\Hom\bigl(\mathfrak m, \mathfrak g_+/\mathfrak u_+(\mathfrak m)\bigr)$.
Now assume that $\mathcal N$ is invariant with respect to the action of $U_+(\mathfrak m)$ on $C^1_+$.
Then the corresponding
space $\overline{\mathcal N}$ is invariant with respect to the action of $U_+(\mathfrak m)$ on $\Hom\bigl(\mathfrak m, \mathfrak g_+/\mathfrak u_+(\mathfrak m)\bigr)$. This implies that if
 $\psi\in P_\mu$ then
$\psi\circ B\in P_\mu$ for any $B\in U_+(\mathfrak m)$, which implies that the bundle $P_\mu$ is a principal $U_+(\mathfrak m)$-bundle. This completes the proof of the second part of our theorem.

\section{ Refinement procedure}
\label{refine}
\setcounter{equation}{0}
\setcounter{thm}{0}
\setcounter{lem}{0}
\setcounter{prop}{0}
\setcounter{cor}{0}
\setcounter{rem}{0}
\setcounter{exm}{0}

 Here we explain the origin of the condition of compatibility with respect to differentiation via the so-called refinement procedure in the cases of the standard representations  of the classical groups and the case $G={\rm Aff}(W)$.

\subsection{The case $G=SL(W)$, $GL(W)$, and ${\rm Aff}(W)$}
The constructions of this subsection work for any group $G$ such that any orbit $\mathcal O$ of flags
is compatible with respect to some grading of the corresponding Lie algebra $g$.
Starting with a curve of flags \eqref{filt} compatible with respect to differentiation, we can build a finer curve of flags compatible with respect to differentiation. For this first  given a curve $t\mapsto L(t)$  of subspaces of $W$ (parametrized somehow), i.e. a curve in a certain Grassmannian of $W$, denote by $C(L)$ the canonical bundle over $L$: the fiber of $C(L)$ over the point $L(t)$ is the vector space $L(t)$. Let $\Gamma(L)$ be the space of all sections of $C(L)$ . Set
 $L^{(0)}(t):=L(t)$ and  define inductively
$$L^{(j)}(t)={\rm span}\{\frac{d^k}{d\tau^k}\ell(t): \ell\in\Gamma(L), 0\leq k\leq j\}$$ for $j\geq0$.
The space $L^{(j)}(t)$ is called the \emph{$j$th extension} or the \emph{$j$th osculating subspace} of the  curve $L$ at the point $t$.
The compatibility of the curve of flags \eqref{filt} with respect to differentiation is equivalent to the condition $\Lambda_i^{(1)}(t)\subset \Lambda_{i-1}(t)$ for any $-\mu+1\leq i\leq -1$ and for any $t$.

Further given a subspace $L$ in $W$
denote by $L^\perp$ the annihilator of $L$ in the dual space $W^*$:
$L^\perp=\{p\in W^*:\langle p, v\rangle=0,\,\forall \,v\in L\}$. Set
$L^{(-j)}(t)=\bigl(\bigl(L(t)^\perp)^{(j)}\bigr)^\perp$
for $j\geq 0$. For any curve $t\mapsto L(t)$ in a Grassmannian we get the nondecreasing filtration
$\{L^{(j)}(t)\}_{j\in {\mathbb Z}}$ of $W$.
If dimensions $L^{(j)}(t)$ are independent of $t$ for any $j \geq 0$,
then the subspaces $L ^{(j)}(t)$, $j<0$, can be also defined
inductively by $L ^{(j)}(t)={\rm Ker}\frac{d}{dt}L ^{(j+1)}(t)$.


 Now take a germ of the curve \eqref{filt} at a point $t$ and assume that it is compatible with respect to differentiation.
 Assume that there exists a neighborhood $\mathcal U$ of $t$ such that one of the following assumptions hold:

 \begin{enumerate}
 \item[{\bf (B1)}] 
 For $\tau\in \mathcal U$ the dimension of subspaces $\bigl(\Lambda_i^{(1)}(\tau)\bigr)$ is constant, the subspace $\Lambda_i^{(1)}(\tau)$ is strictly between $\Lambda_i(t)$ and $\Lambda_{i-1}(t)$,  and
 one has the inclusion $\Lambda_i^{(2)}(\tau)\subset \Lambda_{i-1}(\tau)$;


 \item[{\bf (B2)}] For $\tau\in \mathcal U$ the dimension of the space   $\bigl(\Lambda_i^{(-1)}(\tau)\bigr)$ is constant, the subspace  $\Lambda_i^{(-1)}(\tau)$ is strictly between $\Lambda_{i+1}(t)$ and $\Lambda_i(t)$,  and
 one has the inclusion $\Lambda_{i+1}(\tau)\subset\Lambda_i^{(-2)}(\tau)$ (or, equivalently, $\Lambda_{i+1}^{(1)}(\tau)\subset\Lambda_i^{(-1)}(\tau)$)
      \end{enumerate}
Note that for $i=\mu+1$ the condition $\Lambda_i^{(2)}(\tau)\subset \Lambda_{i-1}(\tau)$ holds automatically in (B1) and for $i=-1$
the condition $\Lambda_{i+1}(\tau)\subset\Lambda_i^{(-2)}(\tau)$ holds automatically in (B2).

If assumption (B1) holds, then we can obtain a new germ of a curve of flags by inserting  in \eqref{filt} the space $\Lambda_i^{(1)}(\tau)$ between $\Lambda_i(\tau)$ and $\Lambda_{i-1}(\tau)$ for any $\tau\in \mathcal U$ . We call such operation \emph{elementary refinement of the first kind}. In the same way if assumption (B2) holds, then we can obtain a new germ of a curve of flag  by inserting  in \eqref{filt} the space $\Lambda_i^{(-1)}(\tau)$ between $\Lambda_{i+1}(\tau)$ and $\Lambda_{i}(\tau)$  for any $\tau\in \mathcal U$. We call such operation \emph{elementary refinement of the second kind}. In both cases we renumber the indices of subspaces in the obtained flags from $-1$ to $-\mu-1$.
%
The germ of a curve of flags $t$ is called a \emph{ refinement of the germ of the curve \eqref{filt} at $t$}, if it can be obtained from this germ
 by a sequence of elementary refinements (either of the first or the second kind).

By construction any refinement preserves the property of compatibility with respect to differentiation. Besides, if two curves are $GL(W)$-equivalent, then the corresponding refinements are $GL(W)$-equivalent.
Further, the refinement procedure defines a partial order on the set of all germs of smooth curves of flags in $W$, which also induces a partial order on the set of all refinements of the germ of the curve \eqref{filt} at $t$. In general a curve of flags admits several different maximal refinements with respect to this partial order.
Finally,  for a curve $t\mapsto L(t)$ in a Grassmannian of $W$ consider the corresponding trivial curve of flags $t\mapsto \{0\subset L(t)\subset W\}$. Then it is easy to show that in a generic point the germ of such curve of flags has the unique maximal refinement, which is exactly the curve $t\mapsto\{L^{(j)}(t)\}_{j\in {\mathbb Z}}$ of all osculating subspaces of the curve $L$ (truncated and renumbered in an obvious way). The corresponding map $\delta_t\in \displaystyle{\bigoplus_{j\in\mathbb Z}}{\rm Hom} \Bigl(L^j(t)/L^{j-1}(t),L^{j+1}(t)/L^{j}(t)\Bigr)$ satisfies the following properties: the restriction of $\delta_t$ on $L^j(t)/L^{j-1}(t)$  is surjective for $j<0$ and $\delta_t$ sends $(L^j(t)/L^{j-1}(t)$ \emph{onto} $L^{j+1}(t)/L^{j}(t)$.
  for $j\geq 0$. Hence the symbol of the curve $t\mapsto\{L^{(j)}(t)\}_{j\in {\mathbb Z}}$ satisfies similar properties.
  Note that the symbol of a curve of flags at a given point is in essence the tangent line to a curve.
  The symbol of the refined curve of flags contains an important information about the jet space of higher order of the original curve.  So, fixing the symbol of refined curve of flag instead of the original one, we fix more subtle classes of curves and our prolongation procedure  is more accurate:
the dimension of the algebraic prolongation  of the symbol of the refined curve (which in turn is equal the dimension of the canonical bundle of moving frames for the refined curve) might be significantly smaller than the dimension of the corresponding objects for the original curve.

\begin{exm}
\label{exm2}
(Non-degenerate curve in projective space)
{\rm Consider a curve  $t\mapsto L(t)$ in a projective space $\mathbb P W$ of an $n+1$-dimensional vector space $W$, i.e. a curve of lines in $W$. Without making a refinement procedure the symbol at a generic point of a curve, which does not degenerate to a point, is a line of degree $-1$ endomorphisms  of the corresponding graded space having rank 1 and the flat curve corresponds to a curve of lines in  $W$ depending linearly on a parameter ( or , equivalently, it is a projective line linearly embedded to $\mathbb P W$).
Obviously, the universal algebraic prolongation coincides with the subalgebra of $\gl(W)$ preserving the plane, generated by the lines of a flat curve.
Now consider so-called non-degenerate curves in $\mathbb P W$, i.e. curves which do not lie in any proper subspace of $\mathbb P W$. For such curve the $n$th osculating space at a generic point $t$ is equal to the whole $W$, $L^{(n)}(t)=W$. The maximal refinement of the curve $t\mapsto \{0\subset L(t)\subset W\}$ is the curve of complete flags
$$t\mapsto \{0\subset L(t)\subset L^{(1)}(t)\subset \ldots L^{(n)}(t)=W\},$$
as in Example \ref{projexm1} above. As already mentioned there
the symbol of such curve is a line of degree $-1$ endomorphisms of the corresponding graded spaces, generated by an endomorphism which has the matrix equal to a Jordan nilpotent block in some basis and
the curve in projective space corresponding to a flat curve of complete flags  is the curve of osculating subspaces of a rational normal curve in the projective space $\mathbb P W$. Finally, from 
Theorem \ref{prolongN} below (see also the sentence after it) it follows that the universal algebraic prolongation of this symbol is isomorphic to $\gl_2$ (or $\sll_2$ in the case $G=SL(W)$), which also follows from the well-known fact that the algebra of infinitesimal symmetries of a rational normal curve is isomorphic to $\gl_2$ (or $\sll_2$ in the case $G=SL(W)$). In this case one can show that the normalization condition can be chosen to be  invariant with respect to the natural action of the subgroup $U_+(\mathfrak m)$ (which is isomorphic in this case to the group the upper-triangular matrices) on $C^1_+$ and in this way the classical complete system of invariants of curves in projective spaces, the Wilczynski invariants, can be constructed (see \cite{dzpar} or \cite{doub3} for the detail). $\Box$
}
\end{exm}

 \subsection{The case $G=Sp(W)$ or $G=CSp(W)$}
Here $W$ is equipped with a symplectic form $\sigma$.
In this case, by Proposition \ref{compsymp} our theory works for curves of \emph{symplectic flags} as in Definition \ref{sympflagdef}.
Given a curve $t\mapsto L(t)$ in a Grassmannian of $W$ denote by  $t\mapsto L^\angle(t)$ the curve of subspace of $W$ such that the space $L^\angle(t)$ is the skew-symmetric complement of
the space $L(t)$ with respect to the symplectic form $\sigma$.
Starting with a germ of a curve \eqref{filt} of symplectic flags  compatible with respect to differentiation and applying an elementary refinement as in the previous subsection, we obtain non-symplectic flags in general. Therefore we need to modify the definition of an elementary refinement appropriately. Such modification is based on the following simple fact: if $t\mapsto L(t)$ is a curve of coisotropic or isotropic subspaces of $W$
then $(L^{(1)})^\angle(t)=(L^{\angle})^{(-1)}(t)$.
We say that a refinement of a curve of symplectic flag $t\mapsto \{\Lambda_i(t)\}_{i\in \mathbb Z}$ is an elementary symplectic refinement in one of the following 2 cases:

\begin{enumerate}
\item[{\bf (C1)}]
If condition (B1) holds with subspaces $\Lambda_i(\tau)$ being coisotropic, then an \emph{elementary symplectic refinement of the  first kind} consists of inserting $\Lambda_i^{(1)}(\tau)$ between $\Lambda_i(\tau)$ and $\Lambda_{i-1}(\tau)$ and inserting $(\Lambda_i^\angle)^{(-1)}(\tau)$ between $(\Lambda_{i-1})^\angle(\tau)$ and $(\Lambda_{i})^\angle(\tau)$ for any $\tau\in \mathcal U$;

\item[{\bf (C2)}]
If condition (B2) holds with subspaces $\Lambda_{i+1}(\tau)$ being coisotropic, then an \emph{elementary symplectic refinement of the second kind} consists of inserting $\Lambda_i^{(-1)}(\tau)$ between $\Lambda_{i+1}(\tau)$ and $\Lambda_{i}(\tau)$ and inserting $(\Lambda_i^\angle)^{(1)}(\tau)$ between $(\Lambda_{i})^\angle(\tau)$ and $(\Lambda_{i+1})^\angle(\tau)$ for any $\tau\in \mathcal U$;
\item[{\bf (C3)}]
If $\Lambda_i(\tau)$ is isotropic and $\Lambda_{i-1}(\tau)$ is coisotropic (which implies by assumptions that $\Lambda_{i-1}(\tau)=\Lambda_i^\angle(\tau)$, then assume that $(\Lambda_i)^{(1)}(\tau)$ is isotropic, of constant dimension in $\mathcal U$, $\Lambda_i(\tau)$ is strictly contained in  $(\Lambda_i)^{(1)}(\tau)$,  and  $\Lambda_i^{(2)}\subset \Lambda_{i-1}^{(-1)}(\tau)$. An \emph{elementary symplectic refinement of  the third kind} consists of inserting $\Lambda_i^{1}(\tau)$ and $\Lambda_{i-1}^{(-1)}(\tau)$ between $\Lambda_i(\tau)$ and $\Lambda_{i-1}(\tau)$
(if $\Lambda_i^{1}(\tau)$ and $\Lambda_{i-1}^{(-1)}(\tau)$ coincide they count as a one space).
\end{enumerate}

By analogy with the previous subsection one can define a symplectic refinement as a composition of elementary symplectic refinement. By construction the resulting flag under a symplectic refinement is symplectic and compatible with respect to diferentiation.
Also the refinement procedure defines a partial order on the set of all germs of smooth curves of symplectic flags in $W$. In general, similar to the previous subsection,  a curve of symplectic flags admits several different maximal symplectic refinements with respect to this partial order. There is a unique maximal refinement for the germs at generic points of the following two types of curves of symplectic flags:
$t\mapsto \{0\subset L(t)\subset W\}$, where spaces $L(t)$ are Lagrangian, i.e. isotropic of dimension $\frac{1}{2}\dim W$, and
$t\mapsto \{0\subset L(t)\subset L^\angle(t)\subset W\}$, where spaces $L(t)$ are proper isotropic.
In the first cases we can use elementary refinement of the first kind only and the maximal refinement coincides with the flag associated with a curve in a Lagrangian Grassmannian, introduced in \cite{zelli1},\cite{zelli2}. In the second case one can use elementary refinements of the first and third kinds only.

 \subsection{The case $G=O(W)$ or $G=CO(W)$}
Here $W$ is equipped with a non-degenerate symmetric form $Q$.
The elementary refinement are defined in completely the same way as in the symplectic case: isotropic and coisotropic subspaces are taken with respect to the form $Q$ and instead of skew-symmetric complement one takes orthogonal complements with respect to $Q$.

 \section{Classification of symbols of curves of flags with respect to
classical groups}
\setcounter{equation}{0}
\setcounter{thm}{0}
\setcounter{lem}{0}
\setcounter{prop}{0}
\setcounter{cor}{0}
\setcounter{rem}{0}
\setcounter{exm}{0}
\label{symbclasec}

We describe all symbols of curves of flags with respect to $GL(W)$ (equivalently, $SL(W)$) , $Sp(W)$ (equivalently,  $CSp(W)$), and
$O(W)$ (equivalently, $CO(W)$.
According to Remark \ref{symbrepind} it can be used for any representation of classical groups.

\subsection{The case of $GL(W)$($SL(W)$)}
\label{symglsec}
Let $\delta_1$ and $\delta_2$ be degree $-1$ endomorphisms of the graded spaces $V_1$ and $V_2$, respectively. The direct sum $V_1\oplus V_2$ is equipped with  the natural grading such that its $i$th component is the direct sum of $i$th components of $V_1$ and $V_2$.
The direct sum $\delta_1\oplus\delta_2$ is the degree $-1$ endomorphism of $V_1\oplus V_2$ such that the restriction of it to $V_i$ is equal to $\delta_i$ for each $i=1,2$.  A degree $-1$ endomorphism $\delta$ of a graded space $V$ is called \emph{indecomposable} if
it cannot be represented as a direct sum of two degree $-1$ endomorphisms acting on nonzero graded spaces.
Further, given two integers $r\leq s<0$ let $V_{rs}=\displaystyle{\bigoplus_{i=r}^sE_i}$, where $\dim E_i=1$ for every $i$, $r\leq i\leq s$, and let $\delta_{rs}$ be a degree $-1$ endomorphism of $V_{rs}$ which sends $E_i$ onto $E_{i-1}$ for every $i$, $r<i\leq s$, and sends $E_r$ to $0$. Then clearly $\delta_{rs}$ is indecomposable.

The following theorem gives the classification of all symbols  of curves of flags (both of unparametrized and parametrized)  with respect to the General Linear group:

\begin{thm}
\label{glclass}
A degree $-1$ endomorphism $\delta$ of a graded space (indexed by negative integers) is conjugated to the direct sum of endomorphisms of type $\delta_{rs}$.
Moreover, for any two integers $r\leq s<0$ the number of apperance of $\delta_{rs}$ in this direct sum
is
an invariant
of the conjugate class.
\end{thm}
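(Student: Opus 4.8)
The plan is to recognize this as a graded refinement of the Jordan normal form for a nilpotent operator, where ``conjugation'' means conjugation by a grading-preserving automorphism $A=\bigoplus_i A_i\in\prod_i GL(V_i)$ (this is the relevant notion, since by the construction of symbols the conjugating group is the degree-preserving group $\mathcal G_0$; without grading-preservation all $\delta_{rs}$ of equal length $s-r+1$ would collapse to one class). First I would note that, as the grading is concentrated in finitely many negative degrees $-\mu\le i\le -1$ and $\delta$ lowers degree by one, we have $\delta(V_{-\mu})\subset V_{-\mu-1}=0$, so $\delta^{\mu}=0$ and $\delta$ is nilpotent. The blocks $\delta_{rs}$ are exactly the homogeneous cyclic subspaces: a homogeneous $v\in V_s$ with $\delta^{s-r}v\neq0$ and $\delta^{s-r+1}v=0$ spans, together with $\delta v,\dots,\delta^{s-r}v$, a $\delta$-invariant graded subspace isomorphic to $(V_{rs},\delta_{rs})$. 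Thus the theorem reduces to showing that $V$ has a homogeneous basis that is a disjoint union of Jordan chains of $\delta$, and that the number of chains with prescribed top degree $s$ and bottom degree $r$ is conjugation-invariant.

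For existence I would induct on $\dim V$. If $\delta=0$, split each $V_i$ into lines, each a block $\delta_{ii}$. If $\delta\neq0$, let $N$ be the nilpotency index; since $\delta$ is homogeneous, some homogeneous $v\in V_s$ realizes the maximal chain length $N$, giving a block $C=\operatorname{span}(v,\delta v,\dots,\delta^{N-1}v)\cong(V_{rs},\delta_{rs})$ with $r=s-N+1$. The crux is to split off $C$ by a graded $\delta$-invariant complement. I would pick a functional $\phi$ supported on $V_r$ with $\phi(\delta^{N-1}v)=1$ and set $W=\bigcap_{k=0}^{N-1}\ker(\phi\circ\delta^{k})$. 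Because $\phi$ is homogeneous and $\delta$ is homogeneous, each $\phi\circ\delta^{k}$ is supported on $V_{r+k}$, so $W$ is a graded subspace; it is $\delta$-invariant (using $\delta^{N}=0$); and the standard argument ($C\cap W=0$ via applying $\phi\delta^{N-1-j_0}$ to a putative nonzero element, together with $\operatorname{codim}W\le N=\dim C$) yields $V=C\oplus W$. Applying the inductive hypothesis to $(W,\delta|_W)$ finishes the decomposition.

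For invariance of the multiplicities $n_{rs}$ I would use a rank count. Under conjugation by $A=\bigoplus_iA_i$, each map $\delta^{k}\colon V_i\to V_{i-k}$ becomes $A_{i-k}\,\delta^{k}A_i^{-1}$, so $\rho_{i,k}:=\operatorname{rank}(\delta^{k}\colon V_i\to V_{i-k})$ is invariant. A single block $\delta_{rs}$ contributes $1$ to $\rho_{i,k}$ precisely when $r\le i-k$ and $i\le s$, hence $\rho_{i,k}=\sum_{r\le i-k,\ s\ge i}n_{rs}$. Inverting this relation by a two-dimensional finite difference gives
\begin{equation*}
n_{rs}=\rho_{s,s-r}-\rho_{s,s-r+1}-\rho_{s+1,s-r+1}+\rho_{s+1,s-r+2},
\end{equation*}
with the convention $\rho_{i,k}=0$ when an index leaves $[-\mu,-1]$. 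Since the right-hand side is assembled from invariants, each $n_{rs}$ is an invariant of the conjugacy class, which simultaneously establishes uniqueness of the decomposition type.

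I expect the main obstacle to be the existence step, and within it the simultaneous realization of the complement $W$ as both graded and $\delta$-invariant; the usual Jordan argument splits off a maximal block, but one must run it with homogeneous data (in particular choosing $\phi$ homogeneous) so that gradedness is preserved throughout the induction. A cleaner route I would keep in reserve is to regard $V$ as a finitely generated torsion module over the graded principal ideal domain $\mathbb K[\delta]$ with $\deg\delta=-1$, and invoke the structure theorem for graded modules over a graded PID: the indecomposable graded cyclic torsion modules are exactly the $(V_{rs},\delta_{rs})$, which yields existence and uniqueness in one stroke. Finally I would remark that rescaling $\delta\mapsto c\delta$ is induced by the grading-preserving automorphism acting as $c^{i}$ on $V_i$, so the classification is identical for parametrized curves (the element $\delta$) and unparametrized curves (the line $\mathbb K\delta$), covering both cases claimed in the statement.
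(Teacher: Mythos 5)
Your proof is correct, and it is the same in spirit as the paper's --- both run the classical Jordan normal form argument while keeping all data homogeneous with respect to the grading --- but the execution differs at both steps. For existence the paper builds a full homogeneous Jordan basis in one pass, descending through the kernel filtration $\ker\delta\subset\ker\delta^2\subset\cdots$: at each stage one chooses homogeneous vectors projecting to a basis of $\ker\delta^{k+1}/\ker\delta^{k}$ that extend the $\delta$-images of the vectors chosen at the previous stage. You instead split off a single maximal cyclic block $C$ by induction on dimension, producing a graded $\delta$-invariant complement $W=\bigcap_{k}\ker(\phi\circ\delta^{k})$ from a homogeneous covector $\phi$; this works (each $\phi\circ\delta^{k}$ is supported on the single component $V_{r+k}$, so $W$ is graded, and the usual codimension count gives $V=C\oplus W$), at the cost of an induction the paper avoids. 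For the invariance of the multiplicities the paper only asserts that it ``can be easily obtained from the above construction'', whereas your inversion of the ranks $\rho_{i,k}=\operatorname{rank}\bigl(\delta^{k}\colon V_i\to V_{i-k}\bigr)$, namely $n_{rs}=\rho_{s,s-r}-\rho_{s,s-r+1}-\rho_{s+1,s-r+1}+\rho_{s+1,s-r+2}$, is an explicit and complete argument --- arguably an improvement, since these ranks are manifestly invariants of conjugation by grading-preserving automorphisms. Your opening remark that conjugation must be taken in the degree-preserving group (otherwise blocks of equal length but different degree placement would coincide) is also a correct and worthwhile clarification that the paper leaves implicit.
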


\begin{proof}
We will proceed as in the classical proof of Jordan Normal Form Theorem (see, for example, \cite[chapter 3]{gelf}). The only difference is that one has to take into account the grading on the ambient space $V$ as well.
Let $\delta$ be a degree $-1$ endomorphism of $V$. Then $\delta$ defines the additional filtration on $V$ via generalized eigenspaces of different orders (corresponding to the unique eigenvalue $0$), namely $\ker (\delta)\subset ker (\delta^2)\subset\ldots \ker (\delta^{l+1})=V$. In the first step choose a tuple of vectors in $V$ consisting of homogeneous vectors (with respect to the grading on $V$) such that their images under the canonical projection to the factor space $\ker (\delta^ {l+1})/\ker (\delta^{l})$ constitute a basis of
$\ker (\delta^ {l+1})/\ker (\delta^{l})$. Then take the images of vectors of  this tuple under $\delta$. Since $\delta$ is of degree $-1$, these images are homogeneous vectors in $V$ belonging to $\ker(\delta^{l})$. Complete the tuple of these images (if necessary) to a tuple of homogeneous vectors in $\ker (\delta^{l})$ such that the images of vectors in this tuple under the canonical projection to the factor space $\ker (\delta^ {l})/\ker (\delta^{l-1})$ constitute a basis of $\ker (\delta^ {l})/\ker (\delta^{l-1})$. Continuing this process we will get a basis of $V$ consisting of homogeneous vectors such that the matrix of the operator $\delta$ in this bases has a Jordan normal form. Each Jordan block corresponds to an indecomposable endomorphism of type $\delta_{rs}$. This proves that $\delta$ is conjugated to the direct sum of endomorphisms $\delta_{rs}$.
The invariance of the number of appearance of $\delta_{rs}$ in this direct sum
can be easily obtained from the above construction as well.
\end{proof}
The previous theorem also shows that \emph{the endomorphisms $\delta_{rs}$ are the only indecomposable degree $-1$ endomorphisms, up to a conjugation, of a graded space (indexed by negative integers)}.

\subsection{Symplectic case}
\label{symbspsec}
Let $\frac{1}{2}\mathbb Z:=\{\frac{i}{2}: i\in \mathbb Z\}$ and $\frac{1}{2}\mathbb Z_{\rm{odd}}:=\mathbb Z\backslash \mathbb Z_{{\rm odd}}=\{i+\frac{1}{2}: i\in \mathbb Z\}$.
It is more convenient for a symplectic flag to make a shift in the indices (either by an integer or by a half of an integer) such that $(\Lambda_{i})^\angle=\Lambda_{-i+frac{1}{2}
}$ for any $i$ in $\mathbb Z$ or $\frac{1}{2}\mathbb Z_{\rm{odd}}$.
In the sequel the subspaces in symplectic flags will be enumerated according to this rule.

Assume that $V=\displaystyle{\bigoplus_{i=r}^sV_i}$ is a graded space, $V_r\neq 0$, $V_s\neq 0$,  and a symplectic form $\sigma$ is given $V$.
We say that this grading is \emph{symplectic} (or $V$ is a \emph{graded symplectic space}) if
the corresponding flag
\begin{equation}
\label{gradsympflag}
\{V^i\}_{i\in \mathbb Z \text{ or } \frac{1}{2}\mathbb Z_{\rm{odd}}},\text{  where } V^i=\displaystyle{\bigoplus_{j\geq i}V_j},
\end{equation}
 is symplectic and also the
subspaces $V_i$ and $V_j$ are skew orthogonal for all pairs $(i,j)$ with $i+j<r+s$.
As in the previous subsection, the notion of indecomposibility plays the crucial role in the classification.
Let $\delta_1$ and $\delta_2$ be degree $-1$ endomorphisms of the graded symplectic spaces $V_1$ and $V_2$, respectively, belonging to the corresponding symplectic algebras. The direct sum $V_1\oplus V_2$ is equipped with  the natural symplectic grading such that its $i$th component is the direct sum of $i$th components of $V_1$ and $V_2$ and the symplectic form on  $V_1\oplus V_2$ is defined as follows: the restriction of this form to $V_i$ coincides with the symplectic form on $V_i$ for each $i=1,2$ and the spaces $V_1$ and $V_2$ are skew-orthogonal in $V_1\oplus V_2$.
The direct sum $\delta_1\oplus\delta_2$ is the degree $-1$ endomorphism of $V_1\oplus V_2$ (belonging to $\mathfrak{sp}(V_1\oplus V_2)$) such that the restriction of it to $V_i$ is equal to $\delta_i$ for each $i=1,2$.
 A degree $-1$ endomorphism $\delta\in \mathfrak{sp}(V)$ of a graded symplectic space $V$ is called \emph{symplectically indecomposable} if
 it cannot be represented as a direct sum  of two degree $-1$ endomorphisms acting on nonzero graded spaces and belonging to the corresponding symplectic algebras.

\begin{rem}
\label{halfintrem}
{\rm
Note that we can add two graded symplectic spaces and two degree $-1$ endomorphisms on them if and only if either the grading on both spaces are indexed by $\mathbb Z$ or the grading on both spaces are indexed by $\frac{1}{2} \mathbb Z_{\rm{odd}}$.}$\Box$
\end{rem}
Below we list two types of symplectically indecomposable degree $-1$ endomorphism:
\begin{enumerate}
\item[\bf{(D1)}]
Given a nonnegative $s\in \frac{1}{2}\mathbb Z$, and an integer $l$   such that $0\leq l\leq 2[s]$ let $V_{s;l}^{\mathfrak{sp}}$ be a linear symplectic space with a basis
\begin{equation}
\label{tuple1}
\{e_{s-l},\ldots, e_s,f_{-s},\ldots,f_{l-s}\}
\end{equation}
such that $\sigma(e_i,e_j)=\sigma(f_i,f_j)=0$, $\sigma(e_i,f_j)=(-1)^{s-i}$, if $j=-i$, and $\sigma(e_i,f_j)=0$ if  $j\neq -i$. Define the grading on $V_{s;l}^{\mathfrak{sp}}$ such that the $i$th component equal to the span of all vectors with index $i$ appearing in the tuple \eqref{tuple1}. It is a symplectic grading. Then denote by $\delta_
{s;l}^{\mathfrak{sp}}$ a degree $-1$ endomorphism of
$V_
{s;l}^{\mathfrak{sp}}$ from the symplectic algebra such that $\delta_{s;l}^{\mathfrak{sp}}(e_i)=e_{i-1}$ for $s-l+1\leq i\leq s$, $\delta_{s;l}^{\mathfrak{sp}}(e_{s-l})=0$, $\delta_{s;l}^{\mathfrak{sp}}(f_i)=f_{i-1}$ for $-s+1\leq i\leq l-s$, and
$\delta_{s;l}^{\mathfrak{sp}}(f_{-s})=0$.


\item[\bf{(D2)}]
Given a positive $m\in \frac{1}{2} Z_{\rm {odd}}$ let
$$\mathcal L_m^{\mathfrak{sp}}=\displaystyle{\bigoplus_{\tiny{\begin{array}{c}-m\leq i\leq m,\\i\in\frac{1}{2}\mathbb Z_{\rm {odd}}\end{array}}}}E_i$$
be a symplectic graded spaces such that $\dim E_i=1$ for every admissible $i$ and let $\tau_m^{\mathfrak{sp}}$ be a degree $-1$ endomorphism of $\mathcal L_m$ from the symplectic algebra which sends $E_i$ onto $E_{i-1}$ for every admissible $i$, except  $i=-m$, and  $\tau_m^{\mathfrak{sp}}(E_{-m})=0$. In the case $\mathbb K=\mathbb R$ we also assume that $\sigma(\tau_m^{\mathfrak{sp}} (e), e)\geq 0$ for all $e_0\in E_{\frac{1}{2}}$.
\end{enumerate}
By Remark \ref{halfintrem} we cannot the direct sum of $\delta_{s;}^{\mathfrak{sp}}$ with $s\in \frac{1}{2}\mathbb Z$ and $\tau_m^{\mathfrak{sp}}$, for example.
The following theorem gives the classification of all degree $-1$ endomorphisms from the symplectic algebra of a graded symplectic space $V$ and consequently the classification of all symbols  of curves (both of unparametrized and parametrized) of symplectic flags of
$V$ with respect to $Sp(V)$ and $CSp(V)$:

\begin{thm}
\label{spclass1}
Assume that $V$ is a graded symplectic space.

\begin{enumerate}
\item
If the grading on $V$ is indexed by $\mathbb Z$ then
a degree $-1$ endomorphism from $\mathfrak{sp}(V)$
is conjugated (by a symplectic transformation)  to the direct sum of endomorphisms of type $\delta_{s;l} ^{\mathfrak{sp}}$, where $s$ is a nonnegative integer
and $0\leq l \leq 2s$. Moreover, for each pair of integers $(s,l)$ with $0\leq l\leq 2s$ the number of appearances of $\delta_{s;l} ^{\mathfrak{sp}}$ in this direct sum is an invariant of the conjugate class.
\item
If the grading on $V$ is indexed  by $\frac{1}{2}\mathbb Z_{\rm {odd}}$ and
$\mathbb K=\mathbb C$, then
a degree $-1$ endomorphism from $\mathfrak{sp}(V)$
 is conjugated (by a symplectic transformation) to the direct sum of endomorphisms of type $\delta_{s;l}^{\mathfrak{sp}}$ with $s\in \frac{1}{2}\mathbb Z_{\rm {odd}}$ and  $0\leq l\leq 2s-1$, and of type  $\tau_m^{\mathfrak{sp}}$ with positive $m\in \frac{1}{2}\mathbb Z_{\rm {odd}}$. Moreover, for each pair $(s,l)\in \frac{1}{2}\mathbb Z_{\rm {odd}}\times\mathbb Z$ with $0\leq l\leq 2s-1$  and a positive $m\in \frac{1}{2}\mathbb  Z_{\rm {odd}}$ the numbers of appearances of $\delta_{s;l} ^{\mathfrak{sp}}$ and $\tau_m ^{\mathfrak{sp}}$ in this direct sum are  invariants of the conjugate class.

\item
If the grading on $V$ is indexed by  $\frac{1}{2}\mathbb Z_{\rm {odd}}$ and
$\mathbb K=\mathbb R$, then
a degree $-1$ endomorphism from $\mathfrak{sp}(V)$
is conjugated  to the direct sum of endomorphisms of type $\delta_{s;l} ^{\mathfrak{sp}}$, $\tau_m ^{\mathfrak{sp}}$, and
$-\tau_m ^{\mathfrak{sp}}$.
Moreover, for each pair $(s,l)\in \frac{1}{2}\mathbb Z_{\rm {odd}}\times\mathbb Z$ with $0\leq l\leq 2s-1$  and a positive $m\in \frac{1}{2} \mathbb Z_{\rm {odd}}$ the numbers of appearances of $\delta_{s;l} ^{\mathfrak{sp}}$, $\tau_m ^{\mathfrak{sp}}$, and
$-\tau_m ^{\mathfrak{sp}}$  in this direct sum are  invariants of the conjugate class.
\end{enumerate}
\end{thm}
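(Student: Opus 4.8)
The plan is to adapt the graded Jordan decomposition of Theorem~\ref{glclass} and then to organize the resulting homogeneous chains by means of the symplectic form, peeling off symplectically indecomposable summands one at a time. The essential tool is the $\delta$-invariance of $\sigma$: since $\delta\in\mathfrak{sp}(V)$ one has $\sigma(\delta x,y)=-\sigma(x,\delta y)$, and hence $\sigma(\delta^a x,\delta^b y)=(-1)^a\sigma(x,\delta^{a+b}y)$, so that the pairing between any two $\delta$-chains is completely determined by the pairing of their top vectors. Moreover, because the grading on $V$ is symplectic, $\sigma$ pairs $V_i$ with $V_{-i}$ (in the centered indexing) and annihilates every other pair of graded components; thus all bookkeeping stays within homogeneous vectors, and the conjugating isomorphisms I build will automatically preserve both $\sigma$ and the grading.

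First I would run the construction of Theorem~\ref{glclass} to write $V$ as a direct sum of $\delta$-invariant cyclic chains spanned by homogeneous vectors, and then choose a chain $C=\langle v,\delta v,\dots,\delta^N v\rangle$ of maximal length $N+1$ with homogeneous top $v$. The dichotomy is whether $\sigma|_C$ vanishes. Here lies the key computation: if $\sigma|_C\neq 0$, invariance forces $\sigma(v,\delta^N v)\neq 0$ and $\deg v=N/2$, while skew-symmetry gives $\sigma(v,\delta^N v)\,(1+(-1)^N)=0$, so that $N$ must be \emph{odd}; the chain is then symmetric about $0$ with half-integer degrees and is exactly of type $\tau_m^{\mathfrak{sp}}$ with $m=N/2$ (type (D2)). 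If instead $\sigma|_C=0$, nondegeneracy of $\sigma$ produces a second homogeneous chain $C'$ of the same maximal length whose degrees are the negatives of those of $C$, and after rescaling the top vectors the off-diagonal pairing takes the normalized form $\sigma(e_i,f_{-i})=(-1)^{s-i}$ of type $\delta_{s;l}^{\mathfrak{sp}}$ (type (D1)). In either case the chosen summand is nondegenerate, so $V$ splits as the orthogonal direct sum of it and its $\delta$-invariant skew-complement (which is $\delta$-invariant precisely because $\sigma$ is $\delta$-invariant), and I would induct. The same parity computation explains the index ranges: in the $\mathbb{Z}$-graded case a self-dual chain would be centered at $0$ with integer degrees, i.e. of odd length, which the computation forbids, so only (D1) occurs; in the $\frac12\mathbb{Z}_{\mathrm{odd}}$-graded case both types appear, and restricting (D1) to $l\le 2s-1$ removes exactly the centered pairs, which are instead recorded as two copies of (D2). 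Conceptually this parity is the standard fact that the $\mathfrak{sl}_2$-invariant form on the irreducible of odd highest weight is alternating while that on even highest weight is symmetric, which is a useful cross-check.

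For the invariance of the multiplicities I would observe that the underlying graded Jordan type — equivalently the numbers of chains $\delta_{rs}$ furnished by Theorem~\ref{glclass} — is already a conjugation invariant recording both the length and the degree-range of every chain. By the parity dichotomy a chain symmetric about $0$ is self-dual and every other chain is forced to pair with its mirror image, so these data determine the multiplicities of $\delta_{s;l}^{\mathfrak{sp}}$ and $\tau_m^{\mathfrak{sp}}$ uniquely over $\mathbb{C}$. Over $\mathbb{R}$ there is one further invariant: for a self-dual even chain the nonzero scalar $\sigma(v,\delta^N v)$ can be altered only by a positive square under homogeneous rescaling, so its sign is invariant and distinguishes $\tau_m^{\mathfrak{sp}}$ from $-\tau_m^{\mathfrak{sp}}$ (read off from $\sigma(\tau_m^{\mathfrak{sp}}e,e)$ for $e\in E_{1/2}$); over $\mathbb{C}$ the same sign is absorbed by rescaling with $i$, which is why only $\tau_m^{\mathfrak{sp}}$ survives in part~(2).

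The chain bookkeeping itself is routine; the genuinely delicate points are, first, the inductive splitting — checking that the skew-complement is again a graded symplectic $\delta$-module of the same kind so that the induction closes, and that in the isotropic case the dual chain $C'$ can be chosen homogeneous of exactly the maximal length — and, second, pinning down the precise ranges of $l$, i.e. justifying that a centered even pair must be normalized as two self-dual blocks rather than one hyperbolic pair, which is what turns existence of the decomposition into uniqueness. The real-field sign in part~(3) is the remaining point that requires care.
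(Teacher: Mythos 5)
Your strategy --- peel off, one at a time, homogeneous $\delta$-chains that are either nondegenerately self-paired or hyperbolically paired with a mirror chain, and induct on the skew-complement --- is the same in spirit as the paper's, and your parity computation correctly captures why self-paired chains must be centered with half-integer degrees (it is the paper's observation that $b(u_1,u_2)=\sigma(u_1,\delta^{l}u_2)$ is skew-symmetric for $l$ even and symmetric for $l$ odd). However, the step you single out as ``the key computation'' is false as stated, and it sits exactly where the content of the theorem lies. For a maximal-length homogeneous chain $C=\langle v,\delta v,\dots,\delta^{N}v\rangle$ produced by the graded Jordan algorithm of Theorem~\ref{glclass} (which is blind to $\sigma$), the hypothesis $\sigma|_{C}\neq0$ does \emph{not} force $\sigma(v,\delta^{N}v)\neq0$ and $\deg v=N/2$: the grading only says that the unique possibly nonzero self-pairing is $\sigma(v,\delta^{k}v)$ with $k=2\deg v$, and nothing a priori excludes $2\deg v<N$, in which case $\sigma|_{C}$ is nonzero but degenerate and $C$ is neither of type (D2) nor isotropic. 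A concrete instance: let $V$ be spanned by two chains of length $4$, namely $e_{5/2},e_{3/2},e_{1/2},e_{-1/2}$ and $f_{1/2},f_{-1/2},f_{-3/2},f_{-5/2}$ (subscripts are degrees, $\delta$ lowers the index by $1$ and kills the bottom vectors), with $\sigma(e_{5/2},f_{-5/2})=1$ propagated along the chains by invariance and, in addition, $\sigma(f_{1/2},f_{-1/2})=c\neq0$. One checks that this is a graded symplectic space with $\delta\in\mathfrak{sp}(V)$ and that these two chains are a legitimate output of the algorithm of Theorem~\ref{glclass}; yet for $C=\langle f_{1/2},\dots,f_{-5/2}\rangle$, which has maximal length, one has $\sigma|_{C}\neq0$ while $\sigma(f_{1/2},\delta^{3}f_{1/2})=0$ and $\deg f_{1/2}=1/2\neq 3/2$. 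Your dichotomy therefore does not apply to this chain, and the induction cannot start.

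What is missing is precisely the pair of lemmas the paper proves before any individual chain is touched: Lemma~\ref{lemb}, asserting that $b(u_1,u_2)=\sigma(u_1,\delta^{l}u_2)$ is nondegenerate on a complement to $\ker\delta^{l}$ (this splits off all chains of maximal nilpotency order simultaneously), and the subsequent lemma that the span $\mathcal X$ of the whole top-degree component $Z_s$ together with its $\delta$-images satisfies $\mathcal X\cap\mathcal X^{\angle}=0$ or $\mathcal X$ --- a statement whose proof uses the symplectic flag condition and which has no single-chain analogue. Only after these reductions can the chains be renormalized (in the example above, replace $f_{1/2}$ by $f_{1/2}-\tfrac{c}{2}e_{1/2}$, which restores an isotropic chain hyperbolically paired with the $e$-chain) so that each is either isotropic or nondegenerately self-paired at full length, which is what your key computation tacitly assumes. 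The parity analysis, the resulting index ranges for $l$, and the real sign invariant distinguishing $\tau_{m}^{\mathfrak{sp}}$ from $-\tau_{m}^{\mathfrak{sp}}$ are all correct once this is supplied, but as written the proposal assumes at its central step the $\sigma$-adaptedness of the chains, which is the statement to be proved.
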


Theorem \ref{spclass1} shows that, up to a conjugation, the only symplectically indecomposable endomorphisms are the endomorphisms of the
type $\delta_{s;l}^{\mathfrak{sp}}$ and $\tau_m^{\mathfrak{sp}}$ for $\mathbb K=\mathbb C$ and of the type $\delta_{s;l}^{\mathfrak{sp}}$, $\tau_m^{\mathfrak{sp}}$, and $-\tau_m^{\mathfrak{sp}}$ in the case $\mathbb K=\mathbb R$.

\begin{proof}
Let $\delta$ be a degree $-1$ endomorphism of $V$ belonging to $\mathfrak{sp}(V)$.  Assume that $\delta^{l+1}=0$ and $\delta^{l}\neq 0$ for some $l\geq 0$. Let $\mathcal C$ be a complement to $\ker \delta^{l}$ in $V$, i.e.
\begin{equation}
\label{sympsymb1}
V=\mathcal C\oplus \ker \delta^{l}.
\end{equation}
Define a bilinear form $b:\mathcal C \times \mathcal C\mapsto\mathbb K$ by
\begin{equation}
\label{formb}
b(u_1,u_2):=\sigma(u_1,\delta^{l} u_2).
\end{equation}
Since $\delta\in
\mathfrak{sp}(V)$ and $\sigma$ is skew-symmetric, the form $b$ is symmetric if $l$ is odd and skew-symmetric if $l$ is even.
As a matter of fact the form $b$ can be considered as a bilinear form on $V/\ker \delta^{l}$.
\begin{lem}
\label{lemb}
The form $b$ is non-degenerate.
\end{lem}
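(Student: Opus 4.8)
The plan is to exploit the fact that $b$ is, up to sign, symmetric or skew-symmetric (as already noted right before the lemma), so that proving non-degeneracy reduces to showing that its radical vanishes. Since $\mathcal C$ is finite-dimensional and $b$ is $(-1)^l$-symmetric, its left and right radicals coincide, and hence it suffices to check that any $u_1 \in \mathcal C$ satisfying $b(u_1,u_2)=0$ for all $u_2 \in \mathcal C$ must be zero.

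First I would rewrite the radical condition geometrically. As $u_2$ ranges over $\mathcal C$, the vector $\delta^l u_2$ ranges over $\delta^l(\mathcal C)$; and since $\delta^l$ annihilates $\ker \delta^l$ while $V = \mathcal C \oplus \ker \delta^l$ by \eqref{sympsymb1}, we have $\delta^l(\mathcal C) = \delta^l(V) = {\rm Im}\,\delta^l$. Therefore the radical condition $\sigma(u_1,\delta^l u_2)=0$ for all $u_2\in\mathcal C$ is equivalent to $u_1 \in ({\rm Im}\,\delta^l)^\angle$, the skew-orthogonal complement of the image of $\delta^l$.

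The key algebraic identity is then $({\rm Im}\,\delta^l)^\angle = \ker \delta^l$, which I would derive from the fact that $\delta \in \mathfrak{sp}(V)$. The defining relation $\sigma(\delta x, y) = -\sigma(x,\delta y)$ iterates to $\sigma(\delta^l v, w) = (-1)^l \sigma(v,\delta^l w)$ for all $v,w \in V$. Thus $v \in ({\rm Im}\,\delta^l)^\angle$ iff $\sigma(v,\delta^l w)=0$ for all $w$, iff $\sigma(\delta^l v, w)=0$ for all $w$, which by non-degeneracy of $\sigma$ means exactly $\delta^l v = 0$, i.e. $v \in \ker \delta^l$. A dimension count confirms consistency: $\dim ({\rm Im}\,\delta^l)^\angle = \dim V - {\rm rank}\,\delta^l = \dim \ker \delta^l$.

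Finally I would combine the two observations: the radical of $b$ equals $\mathcal C \cap ({\rm Im}\,\delta^l)^\angle = \mathcal C \cap \ker \delta^l$, which is $0$ precisely because $\mathcal C$ was chosen as a complement to $\ker \delta^l$ in \eqref{sympsymb1}. Hence $b$ has trivial radical and is non-degenerate. I do not expect any genuine obstacle here; the only point requiring a little care is the sign bookkeeping in $\sigma(\delta^l v, w) = (-1)^l \sigma(v,\delta^l w)$, but since this sign does not change which vectors are annihilated, it is irrelevant to the non-degeneracy conclusion.
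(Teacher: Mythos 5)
Your proposal is correct and follows essentially the same route as the paper's proof: extend the radical condition from $\mathcal C$ to all of $V$ via the splitting \eqref{sympsymb1}, move $\delta^l$ across $\sigma$ using $\delta\in\mathfrak{sp}(V)$ (picking up the harmless sign $(-1)^l$), invoke non-degeneracy of $\sigma$ to get $\delta^l u_1=0$, and conclude $u_1=0$ from $\mathcal C\cap\ker\delta^l=0$. Packaging this as the identity $({\rm Im}\,\delta^l)^\angle=\ker\delta^l$ is just a cleaner phrasing of the same argument.
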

\begin{proof}
Assume that there exists $u_1\in \mathcal C$ such that $b(u_1,u_2)=0$ for any $u_2\in \mathcal C$. In other words, $\sigma(u_1,\delta^{l} u_2)=0$ for any $u_2\in \mathcal C$. This together with splitting \eqref{sympsymb1} implies that $\sigma(u_1,\delta^{l} v)=0$ for any $v\in V$.
Note that from the fact that $\delta\in
\mathfrak{sp}(V)$ it follows that $\sigma(u_1,\delta^{l}v)=(-1)^{l}\sigma(\delta^{l}u_1,v)$, therefore
$\sigma(\delta^{l}u_1,v)=0$ for any $v\in V$. Since $\sigma$ is non-degenerate, we get $\delta^{l}u_1=0$, i.e. $u_1\in \ker \delta^{l}$.  As a consequence of this, our assumption that $u_1\in \mathcal C$, and splitting \eqref{sympsymb1} we get that $u_1=0$. This completes the proof of the lemma.
\end{proof}

Further from the splitting \eqref{sympsymb1} it follows that $\delta^i \mathcal C\cap\delta^j \mathcal C=0$ for $i>j$. Set

\begin{equation}
\label{sympsymb2}
Y_{l}:=\mathcal C\oplus\delta \,\mathcal C\oplus\ldots\oplus \delta^{l} \mathcal C.
\end{equation}

\begin{lem}
\label{lemnon}
The restriction of the symplectic form $\sigma$ to the subspace $Y_{l}$ is non-degenerate.
\end{lem}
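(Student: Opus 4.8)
The plan is to reduce everything to the single relation $\sigma(\delta^a x,\delta^b y)=(-1)^a\sigma(x,\delta^{a+b}y)$, valid for all $x,y\in V$ and $a,b\ge 0$ because $\delta\in\mathfrak{sp}(V)$ (it follows by iterating $\sigma(\delta x,y)=-\sigma(x,\delta y)$). Combined with $\delta^{l+1}=0$, this should let me read off the pairing between any two summands of the decomposition \eqref{sympsymb2}. Concretely, for $u=\delta^i u'$ and $v=\delta^j v'$ with $u',v'\in\mathcal C$ I would compute $\sigma(u,v)=(-1)^i\sigma(u',\delta^{i+j}v')$, so that the pairing vanishes whenever $i+j>l$ and equals $(-1)^i b(u',v')$ exactly when $i+j=l$, with $b$ as in \eqref{formb}.

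Next I would record that $\delta^k|_{\mathcal C}$ is injective for $0\le k\le l$, since $\ker\delta^k\cap\mathcal C\subset\ker\delta^l\cap\mathcal C=\{0\}$ by \eqref{sympsymb1}; hence each $\delta^k$ carries $\mathcal C$ isomorphically onto $\delta^k\mathcal C$. Transporting the previous computation through these isomorphisms shows that the pairing of $\delta^i\mathcal C$ with $\delta^{l-i}\mathcal C$ is, up to sign, the form $b$, which is non-degenerate by Lemma \ref{lemb}. In matrix terms this says that the Gram matrix of $\sigma|_{Y_l}$ with respect to the ordered blocks $\mathcal C,\delta\mathcal C,\dots,\delta^l\mathcal C$ is block anti-triangular: the blocks strictly above the anti-diagonal $\{i+j=l\}$ are zero and the anti-diagonal blocks are non-degenerate.

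From here non-degeneracy is forced by an elimination argument, which I expect to be routine. Taking $w=\sum_{k=0}^l w_k\in Y_l$ with $w_k\in\delta^k\mathcal C$ and $\sigma(w,Y_l)=0$, I would pair $w$ successively against $\delta^l\mathcal C$, $\delta^{l-1}\mathcal C$, and so on. Pairing against $\delta^l\mathcal C$ annihilates every $w_k$ with $k\ge 1$, so the anti-diagonal block $(0,l)$ gives $w_0=0$; pairing against $\delta^{l-1}\mathcal C$ and using $w_0=0$ gives $w_1=0$; and inductively one obtains $w_k=0$ for all $k$, so that the radical of $\sigma|_{Y_l}$ is trivial.

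The only genuinely delicate point is the index bookkeeping: checking that the relation $\sigma(\delta^a x,\delta^b y)=(-1)^a\sigma(x,\delta^{a+b}y)$ produces vanishing exactly above the anti-diagonal and reproduces $\pm b$ on it. Once the block anti-triangular picture with non-degenerate anti-diagonal is established, non-degeneracy of $\sigma|_{Y_l}$ is automatic, so I would concentrate the care on that computation and on the injectivity of the $\delta^k|_{\mathcal C}$.
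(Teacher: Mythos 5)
Your proposal is correct and follows essentially the same route as the paper: both establish that $\sigma(\delta^i u_1,\delta^j u_2)$ vanishes for $i+j>l$ and reduces to $\pm b$ on the anti-diagonal $i+j=l$, so that the Gram matrix of $\sigma|_{Y_l}$ is block anti-triangular with nonsingular anti-diagonal blocks. You merely spell out more explicitly two points the paper leaves implicit (the injectivity of $\delta^k|_{\mathcal C}$ and the final elimination), and you correctly place the vanishing at $i+j>l$ rather than $i+j\geq l$ as the paper's text misstates.
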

\begin{proof}
From Lemma \ref{lemb} it follows that for any $i$ the bilinear form $(u_1,u_2)\mapsto \sigma(\delta^i u_1,\delta^{l-i} u_2)$ is non-degenerate. Also the condition $\delta^{l+1}=0$ implies that $\sigma(\delta^i u_1,\delta^j u_2)=0$ for $i+j\geq l$. This implies that the matrix of the form $\sigma|_{Y_{l}}$ with respect to any basis of $Y_{l}$ is block-triangle with respect to non-principal diagonal and each non-principal diagonal block is nonsingular. This completes the proof of the lemma.
 \end{proof}

We can always choose the space $\mathcal C$ in \eqref{sympsymb1} such that $\mathcal C$ is a direct sum of homogeneous spaces (i.e. a direct sum of subspaces of $V_i$).
By the previous lemma, $V=Y_{l}\oplus Y_l^\angle$, where $Y_{l}^\angle$ is the skew-symmetric complement of $Y_{l}$.
By constructions $ Y_l^\angle\subset\ker\delta^{l}$. Also it is easy to show that $Y_l^\angle$ inherits the symplectic grading from $V$,
$Y_l^\angle=\displaystyle{\bigoplus_{i\in\mathbb Z}Y_l^\angle\cap V_i}$.
Repeat the same procedure for $Y_{l}^\angle$ instead  of $V$, then, if necessary, repeat it again. In this way one gets the unique skew-orthogonal splitting of $V$ into the direct sum of invariant subspaces
of $\delta$,
\begin{equation}
\label{sympsymb3}
V=\bigoplus _{i=1}^dY_{l_i}, \quad l=l_1>l_2>\ldots >l_d,
\end{equation}
such that $\delta^{l_i+1}|_{Y_{l_i}}=0$, $\delta^{l_i}|_{Y_{l_i}}\neq 0$, $Y_{l_i}=\mathcal C_i\oplus\delta\, \mathcal C_i\oplus\ldots\oplus \delta^{l_i} \mathcal C_i$ for a complement $\mathcal C_i$ to $\ker \delta^{l_i}|_{Y_{l_i}}$ in $Y_{l_i}$, and each $Y_{l_i}$ is a graded symplectic space with the grading inherited from $V$.

From the splitting \eqref{sympsymb3} it follows that to prove our theorem it is sufficient to restrict ourself to the case when $V=Y_l$, where $Y_l$ is as in \eqref{sympsymb2}.
Assume that $s$ is the maximal nontrivial degree in the grading of $V$
(i.e $V_s\neq 0$ and $V_i=0$ for $i>s$).
Let $Z_s$ be the space of all vectors of degree $s$ in $V$. Then by constructions $Z_s$ is transversal to $\ker \delta^{l}$.
Let
$$\mathcal X=Z_s\oplus\delta Z_s\oplus\ldots\oplus\delta^{l}Z_s$$
and $\widetilde Z_s$ be the image of $Z_s$ in $V/\ker \delta^{l}$ under the canonical projection on the quotient space. In the sequel we will look on the form $b$ as on a bilinear form on $V/\ker \delta^{l}$.
\begin{lem}
The space $\mathcal X\cap\mathcal X^\angle$ is equal to $0$ or $\mathcal X$.
\end{lem}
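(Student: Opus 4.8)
The plan is to reformulate the statement as a non-degeneracy dichotomy: since $\mathcal{X}\cap\mathcal{X}^\angle$ is exactly the radical of the restricted form $\sigma|_{\mathcal{X}}$, the assertion that it equals $0$ or $\mathcal{X}$ is the same as saying that $\sigma|_{\mathcal{X}}$ is \emph{either nondegenerate or identically zero}. I would then rule out every intermediate possibility purely by degree considerations.

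The first, and most delicate, step is to record the anti-diagonal shape of $\sigma$ with respect to the grading of $V=Y_l$. Writing $r$ and $s$ for the lowest and highest nontrivial degrees, the definition of a graded symplectic space gives $\sigma(V_a,V_b)=0$ whenever $a+b<r+s$; invoking in addition that the flag $\{V^i\}$ is symplectic (which yields the complementary orthogonality $\sigma(V_a,V_b)=0$ for $a+b>r+s$) one obtains $\sigma(V_a,V_b)=0$ unless $a+b=r+s$, and then nondegeneracy of $\sigma$ forces a perfect pairing $V_s\times V_r\to\mathbb{K}$; in particular $\dim V_s=\dim V_r$. This is the only place where both halves of the definition of a graded symplectic space are needed, and it is the part I would write out most carefully.

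Next I would compute $\sigma|_{\mathcal{X}}$ on $\mathcal{X}=\bigoplus_{i=0}^{l}\delta^{i}Z_s$. Since $\delta\in\mathfrak{sp}(V)$ we have $\sigma(\delta x,y)=-\sigma(x,\delta y)$, so for $a,b\in Z_s$
\[
\sigma(\delta^{i}a,\delta^{j}b)=(-1)^{i}\sigma(a,\delta^{i+j}b).
\]
As $\delta^{i}a\in V_{s-i}$, this can be nonzero only if $i+j\le l$ (otherwise $\delta^{i+j}b=0$) and simultaneously $(s-i)+(s-j)=r+s$, i.e. $i+j=N$ where $N:=s-r$. Because $\delta^{l}|_{\mathcal{C}}$ is injective (as $\mathcal{C}$ is a complement to $\ker\delta^{l}$) and $Z_s=V_s\subseteq\mathcal{C}$ (for $i\ge1$ the summand $\delta^{i}\mathcal{C}$ sits in degrees $\le s-1$, so $V_s$ can only come from $\mathcal{C}$), the map $\delta^{l}|_{Z_s}$ is injective; hence $\mathcal{X}$ genuinely occupies the degrees $s,s-1,\dots,s-l$, giving $s-l\ge r$, that is $N\ge l$. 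The dichotomy now follows by comparing $N$ with $l$. If $N>l$, the requirements $i+j=N$ and $i+j\le l$ are incompatible, so $\sigma|_{\mathcal{X}}\equiv0$, the space $\mathcal{X}$ is isotropic and $\mathcal{X}\cap\mathcal{X}^\angle=\mathcal{X}$. If $N=l$ (equivalently $r=s-l$), the nonzero pairings occur exactly for $i+j=l$, namely $\sigma(\delta^{i}a,\delta^{l-i}b)=(-1)^{i}b(a,b)$ with $b$ as in \eqref{formb}; here $\delta^{l}|_{Z_s}\colon V_s\to V_r$ is injective between spaces of equal dimension, hence an isomorphism onto $V_r$, and together with the perfect pairing $V_s\times V_r$ this makes $b|_{Z_s}$ nondegenerate. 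Consequently the Gram matrix of $\sigma|_{\mathcal{X}}$ in the decomposition $\bigoplus_i\delta^{i}Z_s$ is block-anti-diagonal with nonsingular anti-diagonal blocks, exactly as in Lemma \ref{lemnon}, so $\sigma|_{\mathcal{X}}$ is nondegenerate and $\mathcal{X}\cap\mathcal{X}^\angle=0$.

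The only genuine obstacle I anticipate is the preliminary orthogonality step: the definition of a graded symplectic space states skew-orthogonality only for $a+b<r+s$, so one must first use the symplectic-flag condition to kill the pairings with $a+b>r+s$ before the clean anti-diagonal picture—and with it the whole case split—becomes available. Once $N\ge l$ and the single identity $\sigma(\delta^{i}a,\delta^{j}b)=(-1)^{i}\sigma(a,\delta^{i+j}b)$ are in hand, everything else is routine bookkeeping.
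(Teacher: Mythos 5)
Your proof is correct, but it follows a genuinely different route from the paper's. The paper first reduces the lemma (by the same Gram-matrix argument as in Lemma \ref{lemnon}) to the statement that $b|_{\widetilde Z_s}$ is either identically zero or non-degenerate, and then rules out a nonzero proper kernel $K_1$ by contradiction, playing $K_1^\perp$ against the fact that $(V^{s-l+2})^\angle$ and $(V^{s-l+1})^\angle$ must themselves be subspaces of the flag. You instead compute $\sigma|_{\mathcal X}$ outright: the anti-diagonal pairing structure of a graded symplectic space together with the single identity $\sigma(\delta^i a,\delta^j b)=(-1)^i\sigma(a,\delta^{i+j}b)$ shows that the whole dichotomy is governed by the comparison of $l$ with $s-r$, the restricted form being identically zero when $l<s-r$ and block-anti-diagonally non-degenerate when $l=s-r$. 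This buys something the paper only extracts implicitly afterwards, namely an explicit numerical criterion for which alternative occurs (it is exactly the case $l=2s$ that gives $\mathcal X\cap\mathcal X^\angle=0$, matching the appearance of $\delta_{s;2s}^{\mathfrak{sp}}$ in the subsequent lemmas), at the price of having to justify the orthogonality $\sigma(V_a,V_b)=0$ for $a+b>r+s$, which you rightly single out as the delicate point: it is not part of the definition verbatim and must be extracted from the requirement that skew-complements of flag subspaces are again flag subspaces. For the special case you actually use (namely $a=s$, which is all that the displayed identity requires), this can be done in one line: $(V^{r+1})^\angle$ is a nonzero flag subspace, hence contains the smallest nonzero one, $V^s=V_s$, so $\dim V_s\le\dim V_r$; the orthogonality of $V_r$ to every $V_j$ with $j<s$ together with non-degeneracy of $\sigma$ gives the reverse inequality; hence $(V^{r+1})^\angle=V_s$, i.e.\ $V_s$ pairs with $V_r$ only, and perfectly. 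With that step secured, the rest of your argument is complete.
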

\begin{proof}
By the same arguments as in the proof of Lemma \ref{lemnon} the statement of the present lemma is equivalent to the following statement: the restriction $b|_{\widetilde Z_s}$ of the form $b$ on the space $\widetilde Z_s$ is either equal to $0$ identically or non-degenerate.
Set $K_1=\ker b|_{\widetilde Z_s}$. Assume by contradiction that $K_1$ is a nonzero proper subspace of $Z_s$.
Let $\{V^j\}_{j\in\mathbb Z \text{ or } \frac{1}{2}\mathbb Z_{\tiny{\rm odd}}}$ be the flag as in \eqref{gradsympflag}.
From the assumption that $K_1\neq \widetilde Z_s$ it follows that $\delta Z_s\nsubseteq (V^{s-l+2})^\angle$. Hence
\begin{equation}
\label{Zs1}
V^{s-1}\nsubseteq (V^{s-l+2})^\angle.
\end{equation}
Since the flag $\{V^j\}_{j\in\mathbb Z}$ is symplectic, the space $(V^{s-l+2})^\angle$ has to be equal to one of the subspaces of the flag $\{V^j\}_{j\in\mathbb Z}$. This together with the assumption that $s$ is the maximal degree in the grading of $V$ and relation \eqref{Zs1} implies that
\begin{equation}
\label{Zs2}
(V^{s-l+2})^\angle=V_s=Z_s.
\end{equation}
Now  let $K_1^\perp=\{v\in V/\ker \delta^{l}: b(v,u)=0 \quad \forall u\in K_1
\}$. By constructions, $\widetilde Z_s\subseteq K_1^\perp$. The assumption $K_1\neq 0$ implies that $K_1^\perp$ is a proper subspace of $V/\ker \delta^{l}$.
 Therefore there exist elements of degree less than $s$  in $V$ that are transversal to $\ker \delta^{l}$. Consequently the space $V^{s-l+1}$ is a proper subspace of $V$ and $V^{s-l+2}\subsetneqq V^{s-l+1}$
This yields in turn that that $(V^{s-l+1})^\angle$ is a nonzero proper subspace of $(V^{s-l+2})^\angle=V_s$, which contradicts
the fact that the flag $\{V^j\}_{j\in\mathbb Z}$ is symplectic and that $s$ is the maximal degree in the grading of $V$.
The proof of the lemma is completed.
\end{proof}

Now consider separately the cases $\mathcal X\cap\mathcal X^\angle=0$ and $\mathcal X\cap\mathcal X^\angle=\mathcal X$.

\begin{lem} If $\mathcal X\cap\mathcal X^\angle=0$, then $V=\mathcal X$.
\end{lem}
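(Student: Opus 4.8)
The plan is to derive $V=\mathcal X$ from the nondegeneracy of $\sigma|_{\mathcal X}$ by converting it into a constraint on which degrees can occur in the complement $\mathcal C$ used in \eqref{sympsymb2}. Write $r$ for the minimal degree occurring in $V$; since $V$ is a graded symplectic space with maximal degree $s$, the form $\sigma$ restricts to a nondegenerate pairing between $V_i$ and $V_{r+s-i}$ and satisfies $\sigma(V_i,V_j)=0$ whenever $i+j\neq r+s$. The first step is to note that, by the equivalence established in the proof of the previous lemma, the hypothesis $\mathcal X\cap\mathcal X^\angle=0$ says precisely that $b|_{\widetilde Z_s}$ is nondegenerate; as $Z_s=V_s\neq 0$, this restriction is in particular nonzero.

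The second step is to read off the degree of the form $b$ in \eqref{formb}. Since $\delta$ has degree $-1$, for homogeneous $u_1,u_2$ of degrees $d_1,d_2$ the element $\delta^l u_2$ has degree $d_2-l$, so $b(u_1,u_2)=\sigma(u_1,\delta^l u_2)$ can be nonzero only if $d_1+(d_2-l)=r+s$, that is $d_1+d_2=r+s+l$. Applying this to two elements of $\widetilde Z_s$, both of degree $s$, the nonvanishing of $b|_{\widetilde Z_s}$ forces $2s=r+s+l$, hence $l=s-r$. Consequently $b$ can pair a homogeneous vector of degree $d$ only with vectors of degree $2s-d$.

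The last step is to rule out degrees below $s$ in $\mathcal C$. We may take $\mathcal C$ to be a direct sum of homogeneous subspaces, and $\mathcal C\subset V$ has no component of degree greater than $s$. Thus if $\mathcal C$ had a nonzero homogeneous component of degree $d<s$, each of its vectors would be $b$-orthogonal to all of $\mathcal C\cong V/\ker\delta^l$, because the required partner degree $2s-d$ exceeds $s$ and hence does not occur; this contradicts the nondegeneracy of $b$ proved in Lemma \ref{lemb}. Therefore $\mathcal C\subseteq V_s=Z_s$, and so
\begin{equation*}
V=\mathcal C\oplus\delta\,\mathcal C\oplus\ldots\oplus\delta^l\mathcal C\subseteq Z_s\oplus\delta Z_s\oplus\ldots\oplus\delta^l Z_s=\mathcal X\subseteq V,
\end{equation*}
which gives $V=\mathcal X$. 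The one point requiring care is the bookkeeping of the symmetry constant $r+s$ and the resulting degree-pairing rule for $b$; once $l=s-r$ is forced, the absence of degrees above $s$ in $\mathcal C$ makes the conclusion immediate.
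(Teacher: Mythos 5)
Your proof is correct, and it reaches the conclusion by a genuinely more direct route than the paper's. The paper argues by contradiction: it writes $V=\mathcal X\oplus\mathcal X^\angle$, observes that a nonzero $\mathcal X^\angle$ would be a graded symplectic space of maximal degree less than $s$, and plays this off against relation \eqref{Zs2}, which constrains the minimal degree occurring in $V=Y_l$. You instead make the degree of the form $b$ explicit: since $\sigma$ pairs $V_i$ only with $V_{r+s-i}$ (the half $i+j<r+s$ is in the definition of a graded symplectic space, the half $i+j>r+s$ comes from the flag $\{V^j\}$ being symplectic) and $\delta$ has degree $-1$, the form $b$ can pair degree $d_1$ only with degree $d_2=r+s+l-d_1$; the nonvanishing of $b|_{\widetilde Z_s}$ then forces $l=s-r$, and the nondegeneracy of $b$ from Lemma \ref{lemb} forces any graded complement $\mathcal C$ to $\ker\delta^{l}$ to sit entirely in degree $s$, whence $V=Y_l\subseteq\mathcal X$. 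Both arguments rest on Lemma \ref{lemb} plus degree bookkeeping, but yours is self-contained and sidesteps \eqref{Zs2} entirely. That is a real advantage: your computation locates the minimal degree of $V$ at $s-l$, whereas \eqref{Zs2} as stated would place it at $s-l+1$; the indecomposable models $V_{s;2s}^{\mathfrak{sp}}$ and $\mathcal L_m^{\mathfrak{sp}}$, for which $\delta^{l}\neq 0$ with $l=2s=s-r$, confirm your count, so the paper's concluding appeal to \eqref{Zs2} needs an off-by-one repair that your argument simply does not require.
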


\begin{proof}
$V=\mathcal X\oplus\mathcal X^\angle$. Let us prove that $\mathcal X^\angle=0$ or, equivalently, $V=\mathcal X$. Indeed,
assume by contradiction that $\mathcal X^\angle\neq 0$. Then $\mathcal X^\angle$ is a graded symplectic spaces with the symplectic forms
and the gradings inherited from $V$ such that the maximal degree in the grading is less than $s$. This together with the assumption that $V=Y_l$, where $Y_l$ is as in \eqref{sympsymb2}, implies that in the grading of $V$ there are nonzero vectors of degree less than $s-l+1$. This contradicts relation
\eqref{Zs2} that must hold in the considered case.
\end{proof}

\begin{lem}
\label{spclasscor}
Assume that $\mathcal X\cap\mathcal X^\angle=0$. Then the following three statements hold:
\begin{enumerate}
\item
If $l$ is even, then the endomorphism $\delta$ is conjugated to the direct sum of the endomorphisms $\delta_{s,2s}^{\mathfrak{sp}}$ with an integer $s$ repeated $\frac{1}{2}\dim Z_s$ times;
\item
If $l$ is odd and  $\mathbb K=\mathbb C$, then the endomorphism $\delta$ is conjugated to the direct sum of the endomorphisms $\tau_{s}^{\mathfrak{sp}}$ repeated
$\dim Z_s$ times;

\item
If $l$ is odd and $\mathbb K=\mathbb R$, then the endomorphism $\delta$ is conjugated to the direct sum of the endomorphisms $\tau_{s}^{\mathfrak{sp}}$ and $-\tau_{s}^{\mathfrak{sp}}$
such that the numbers of appearances of  $\tau_{s/2}$ and $-\tau_{s/2}$ are equal to the positive and the negative indices of the form $b$, respectively.
\end{enumerate}
\end{lem}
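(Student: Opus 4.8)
The plan is to reduce the whole classification to a normal form of the nondegenerate bilinear form $b$ on the homogeneous space $Z_s$ and then to lift it to a skew-orthogonal graded decomposition of $V$. Since $\mathcal X\cap\mathcal X^\angle=0$, the previous lemma gives $V=\mathcal X=Z_s\oplus\delta Z_s\oplus\cdots\oplus\delta^l Z_s$, so every homogeneous $z\in Z_s$ generates a Jordan string $z,\delta z,\dots,\delta^l z$ of length $l+1$, and these strings span $V$. Because $V$ is a graded symplectic space, its grading is symmetric about $0$ and $\sigma$ pairs a vector of degree $p$ with one of degree $q$ nontrivially only when $p+q=0$. As $b(z_1,z_2)=\sigma(z_1,\delta^l z_2)$ is nondegenerate (Lemma \ref{lemb}) and $\delta^l z_2$ has degree $s-l$, the top degree $s$ must be paired with $s-l$, forcing $s-l=-s$, i.e. $s=l/2$; thus the grading is indexed by $\mathbb Z$ if $l$ is even and by $\frac{1}{2}\mathbb Z_{\rm odd}$ if $l$ is odd, which is exactly the trichotomy of the statement. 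Combining $\delta\in\mathfrak{sp}(V)$ (so that $\sigma(\delta^i z_1,\delta^j z_2)=(-1)^i\sigma(z_1,\delta^{i+j}z_2)$) with this degree count and $\delta^{l+1}=0$ yields the cornerstone identity
\[
\sigma(\delta^i z_1,\delta^j z_2)=\begin{cases}(-1)^i\,b(z_1,z_2),& i+j=l,\\ 0,& i+j\ne l.\end{cases}
\]

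Next I would bring $b$ into normal form on $Z_s$, noting that $b$ is skew-symmetric when $l$ is even and symmetric when $l$ is odd. I would choose a basis of the homogeneous space $Z_s$ adapted to $b$: a Darboux basis of hyperbolic pairs when $l$ is even, and a $b$-orthogonal basis when $l$ is odd. The cornerstone identity shows that strings generated by $b$-orthogonal vectors are skew-orthogonal in $V$, so the adapted basis produces a skew-orthogonal, graded direct sum of $\delta$-invariant string subspaces, and it suffices to identify a single hyperbolic pair (resp. a single string) with the corresponding model endomorphism.

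For $l$ even, a hyperbolic pair $u,w\in Z_s$ with $b(u,w)=1$ and $b(u,u)=b(w,w)=0$ spans a $2(l+1)$-dimensional summand. Setting $e_i:=\delta^{s-i}u$ and $f_i:=\delta^{s-i}w$ for $-s\le i\le s$ and applying the cornerstone identity, one checks $\delta e_i=e_{i-1}$, $\delta f_i=f_{i-1}$, $\sigma(e_i,e_j)=\sigma(f_i,f_j)=0$, and $\sigma(e_i,f_{-i})=(-1)^{s-i}$, so $\delta$ restricted to this summand is conjugate to $\delta^{\mathfrak{sp}}_{s;2s}$ with $s=l/2$; the number of pairs is $\frac{1}{2}\dim Z_s$, giving (1). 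For $l$ odd, a single $b$-orthogonal $z$ with $b(z,z)=\lambda\ne0$ spans an $(l+1)$-dimensional $\delta$-invariant summand on which $\sigma$ is nondegenerate (here $i\ne l-i$ as $l$ is odd); via $g_i:=\delta^{s-i}z$ it is isomorphic to $(\mathcal L^{\mathfrak{sp}}_{s},\tau^{\mathfrak{sp}}_{s})$ up to the scalar $\lambda$. A symplectic intertwiner $\phi$ with $\phi(z)=\mu v$, where $v$ is the top vector of the model, identifying the summand with $\pm\tau^{\mathfrak{sp}}_{s}$ must satisfy $\mu^2\lambda_0=\lambda$, where $\lambda_0$ is the analogous scalar of the model. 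Over $\mathbb C$ this is always solvable, so every summand is conjugate to $\tau^{\mathfrak{sp}}_{s}$ and there are $\dim Z_s$ of them, giving (2).

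The main obstacle is the real case and its sign bookkeeping. Over $\mathbb R$ one has $\mu^2>0$, so only the sign of $\lambda$ can be matched to that of $\lambda_0$: a generator with $b(z,z)>0$ produces $\tau^{\mathfrak{sp}}_{s}$ and one with $b(z,z)<0$ produces $-\tau^{\mathfrak{sp}}_{s}$ (or the reverse, according to the fixed sign $(-1)^{s+1/2}$ built into the normalization in (D2)). Since rescaling a string generator multiplies $b(z,z)$ by a positive square, Sylvester's law of inertia for the symmetric form $b$ shows that the multiplicities of $\tau^{\mathfrak{sp}}_{s}$ and $-\tau^{\mathfrak{sp}}_{s}$ equal the positive and negative indices of $b$, which is statement (3). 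The remaining ingredients---the cornerstone identity and the skew-orthogonality of the string decomposition---are routine once the degree bookkeeping in the graded symplectic space is in place, so the genuine care is required only in tracking these signs.
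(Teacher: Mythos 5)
Your proof is correct and follows essentially the same route as the paper's: take $\mathcal C=Z_s$, observe that $b$ is nondegenerate skew-symmetric for even $l$ and symmetric for odd $l$, bring it to normal form (Darboux basis, resp.\ diagonalization with Sylvester's law over $\mathbb R$), and identify the minimal $\delta$-invariant subspaces generated by the basis vectors or hyperbolic pairs with the model endomorphisms $\delta^{\mathfrak{sp}}_{s;2s}$ and $\pm\tau^{\mathfrak{sp}}_s$. You merely make explicit the degree bookkeeping and the identity $\sigma(\delta^i z_1,\delta^j z_2)=(-1)^i b(z_1,z_2)$ for $i+j=l$ that the paper leaves implicit.
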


\begin{proof}
In the considered case as a subspace $\mathcal C$ in \eqref{sympsymb1} one can take $Z_s$.
As was already mentioned before, the form $b$ (defined on $Z_s$) is non-degenerate symmetric if $l$ is odd and non-degenerate skew-symmetric if $l$ is even.
If $l$ is even we can choose a Darboux (symplectic) bases in $Z_s$ with respect to the form $b$, i.e. a basis
$\{\varepsilon_j,\nu_j\}_{j=1}^{\frac{1}{2}\dim Z_s}$ such that the form $b$ is nonzero (and equal to $\pm 1$)
only for the pairs $(\varepsilon_j,\nu_j)$ and $(\nu_j,\varepsilon_j)$. Then for each $1\leq j\leq \frac{1}{2}\dim Z_s$ the restriction of the endomorphism $\delta$ to the minimal invariant subspaces of $\delta$ containing $\varepsilon_j$ and $\nu_j$ is conjugated to $\delta_{s;2s}^{\mathfrak{sp}}$, which proves item (1) of the lemma. If $l$ is odd we choose a basis of $Z_s$ for which the quadratic form corresponding to the form $b$ is diagonal. Then for each vector of this basis the restriction of $\delta$ to the minimal invariant subspaces of $\delta$ containing this vector is conjugated to $\tau_s^{\mathfrak{sp}}$, which proves items (2) and (3) of the lemma.
\end{proof}

Now consider the case  $\mathcal X\cap\mathcal X^\angle=\mathcal X$.
In this case  $\widetilde Z_s\subset \widetilde Z_s^\perp$. The grading on $V$ induces the natural grading on $V/\ker \delta^{l}$.
Take a subspace $\widetilde K_2$ consisting of the homogeneous elements of the minimal degree $s_1$ in $V/\ker \delta^{l}$.
By constructions $s_1<s$. Let us prove that
\begin{equation}
\label{Zs3}
V/\ker \delta^{l}=\widetilde Z_s^\perp\oplus \widetilde K_2.
\end{equation}
Take a subspace $K_2$ which is a representative of $\widetilde K_2$ in $V$ and consists of homogeneous vectors (of degree $s_1$).
From minimality of $s_1$ it follows that $V_{s_1-l+1}=\delta^{l}K_2$ and $V_i=0$ for $i>s_1-l+1$. From the maximality of $s$ it follows that $Z_s=(V^{s_1-l+2})^\angle$.  This implies that $\dim\widetilde K_2=\dim \widetilde Z_s$ and that $\widetilde K_2\cap \widetilde Z_s^\perp= 0$, which in turn yields \eqref{Zs3}.

Now let
$\mathcal Y=K_2\oplus\delta K_2\oplus\ldots\oplus\delta^{l}K_2$.
By constructions, the restriction of the endomorphism $\delta$ to
$\mathcal X \oplus \mathcal Y$ is conjugated to the direct sum of endomorphisms $\delta_{s;l}^{\mathfrak{sp}}$ repeated $\dim Z_s$ times
Further consider the space $(\mathcal X \oplus \mathcal Y)^\angle$.
This space inherits the grading from $V$,
$(\mathcal X \oplus \mathcal Y)^\angle=\displaystyle{\bigoplus_{i\in\mathbb Z}(\mathcal X \oplus \mathcal Y)^\angle\cap V_i}$, with the maximal nontrivial grading less than $s$.
Repeat the same procedure for $(\mathcal X \oplus \mathcal Y)^\angle$ instead  of $V$, then, if necessary, repeat it again. In this way we decompose the endomorphism $\delta$ in the case of $V=Y_l$ to the direct sum of symbols of the type $\delta_{s;l}^{\mathfrak{sp}}$ and $\tau_m^{\mathfrak{sp}}$ for $\mathbb K=\mathbb C$ or $\delta_{s;l}^{\mathfrak{sp}}$, $\tau_m^{\mathfrak{sp}}$, and $-\tau_m^{\mathfrak{sp}}$ in the case $\mathbb K=\mathbb R$. This together with the decomposition \eqref{sympsymb3}  completes the proof of Theorem \eqref{spclass1}.
\end{proof}

\begin{rem}
\label{condGrem}
\rm{In the works \cite{zelli1,zelli2} of the second author with C. Li parameterized curves in Lagrangian Grassmannian (over $\mathbb R$) satisfying so-called condition (G) were considered. In the present terminology condition (G) of  \cite{zelli1,zelli2} is equivalent to the condition that the symbol of the parameterized curve of symplectic flag is conjugated to the direct sum of endomorphisms of type $\tau_m$ and $-\tau_m$.
The Young diagram which were assigned there to a curve in Lagrangian Grassmannian can be described as follows: for any $p\in  \mathbb N$
the number of rows of length $p$ in it is equal to the number of appearances of endomorphisms $\tau_{\tiny{\frac{2p-1}{2}}}^{\mathfrak{sp}}$ and $-\tau_{\tiny{\frac{2p-1}{2}}}^{\mathfrak{sp}}$ in this direct sum. We calculate the universal prolongation of such symbols in subsection \ref{condGsubsec} below, which together with Theorem \ref{mainthm} gives more conceptual point of view on the constructions of papers \cite{zelli1,zelli2} and generalize them to more general classes of curves.}
\end{rem}

\subsection{Orthogonal case}
\label{orthsymbsec}
The classification of symbols in this case  is very similar to the symplectic case.
As in the symplectic case we make a shift in the indices (either by an integer or by a half of an integer) such that $(\Lambda_{i})^\perp=\Lambda_{-i+1}$ for any $i$ in $\mathbb Z$ or $\frac{1}{2}\mathbb Z_{\rm{odd}}$, where $L^\perp$ denotes the orthogonal complement of a subspace $L$ with respect to the nondegenerate symmetric form $Q$.
Further, by complete analogy with the symplectic space we can define graded orthogonal spaces and orthogonally indecomposable degree $-1$ endomorphisms.

By analogy with (D1) and (D2) of the previous subsection, there are the following two types of orthogonally indecomposable degree $-1$ endomomorphisms:
\begin{enumerate}
\item[\bf{(E1)}]
Given a positive $s\in \frac{1}{2}\mathbb Z$, and an integer $l$   such that $0\leq l\leq 2(s+\{s\})-1$ let $V_{s;l}^{\mathfrak{so}}$ be a a linear space equipped with a nondegenerate symmetric form and with a basis
\begin{equation}
\label{tuple1}
\{e_{s-l},\ldots, e_s,f_{-s},\ldots,f_{l-s}\}
\end{equation}
such that $Q(e_i,e_j)=Q(f_i,f_j)=0$, $\sigma(e_i,f_j)=(-1)^{s-i}$, if $j=-i$, and $\sigma(e_i,f_j)=0$ if  $j\neq -i$. Define the grading on $V_{s;l}^{\mathfrak{so}}$ such that the $i$th component equal to the span of all vectors with index $i$ appearing in the tuple \eqref{tuple1}. It is an orthogonal grading. Then denote by $\delta_
{s;l}^{\mathfrak{so}}$ a degree $-1$ endomorphism of
$V_
{s;l}^{\mathfrak{so}}$ from the symplectic algebra such that $\delta_{s;l}^{\mathfrak{so}}(e_i)=e_{i-1}$ for $s-l+1\leq i\leq s$, $\delta_{s;l}^{\mathfrak{so}}(e_{s-l})=0$, $\delta_{s;l}^{\mathfrak{so}}(f_i)=f_{i-1}$ for $-s+1\leq i\leq l-s$, and
$\delta_{s;l}^{\mathfrak{so}}(f_{-s})=0$.

\item[\bf{(E2)}]
Given a nonnegative integer $m$
let
$\mathcal L_m^{\mathfrak{so}}=\displaystyle{\bigoplus_{i=-m}^m}E_i$
be an orthogonal graded spaces such that $\dim E_i=1$ for every admissible $i$ and let $\tau_m^{\mathfrak{so}}$ be a degree $-1$ endomorphism of $\mathcal L_m$ from the symplectic algebra which sends $E_i$ onto $E_{i-1}$ for every admissible $i$, except  $i=-m$, and  $\tau_m^{\mathfrak{so}}(E_{-m})=0$. In the case $\mathbb K=\mathbb R$ we also assume that $Q(\tau_m^{\mathfrak{sp}} (e), e)\geq 0$ for all $e_0\in E_{1}$.
\end{enumerate}

The following theorem gives the classification of all degree $-1$ endomorphisms from the orthogonal algebra of a graded orthogonal space $V$ and consequently the classification of all symbols  of curves (both of unparametrized and parametrized) of orthogonal flags of
$V$ with respect to $O(V)$ and $CO(V)$:

\begin{thm}
\label{soclass1}
Assume that $V$ is a graded orthogonal space.
\begin{enumerate}
\item
If the grading on $V$ is indexed by $\frac{1}{2}\mathbb Z_{\rm odd}$ then
a degree $-1$ endomorphism from $\mathfrak{so}(V)$
is conjugated (by an orthogonal transformation)  to the direct sum of endomorphisms of type $\delta_{s;l} ^{\mathfrak{so}}$, where $s\in \frac{1}{2}\mathbb Z_{\rm odd}$
and $0\leq l \leq 2s$. Moreover, for each pair of integers $(s,l)\in \frac{1}{2}\mathbb Z_{\rm {odd}}\times\mathbb Z$ with $0\leq l\leq 2s$ the number of appearances of $\delta_{s;l} ^{\mathfrak{so}}$ in this direct sum is an invariant of the conjugate class.
\item
If the grading on $V$ is indexed  by $Z$ and
$\mathbb K=\mathbb C$, then
a degree $-1$ endomorphism from $\mathfrak{so}(V)$
 is conjugated (by an orthonormal transformation) to the direct sum of endomorphisms of type $\delta_{s;l} ^{\mathfrak{so}}$ with a positive integer $s$
and  $0\leq l\leq 2s-1$, and of type  $\tau_m^{\mathfrak{so}}$ with a nonnegative integer $m$. Moreover, for each pair of integers $(s,l)$
with $0\leq l\leq 2s-1$  a nonnegative integer $m$ the numbers of appearances of $\delta_{s;l} ^{\mathfrak{so}}$ and $\tau_m ^{\mathfrak{so}}$ in this direct sum are  invariants of the conjugate class.

\item
If the grading on $V$ is indexed by  $\frac{1}{2}\mathbb Z_{\rm {odd}}$ and
$\mathbb K=\mathbb R$, then
a degree $-1$ endomorphism from $\mathfrak{so}(V)$
is conjugated  to the direct sum of endomorphisms of type $\delta_{s;l} ^{\mathfrak{so}}$, $\tau_m ^{\mathfrak{so}}$, and
$-\tau_m ^{\mathfrak{so}}$.
Moreover, for each pair of integers $(s,l)$
with $0\leq l\leq 2s-1$  and a nonnegarive integer $m$
the numbers of appearances of $\delta_{s;l} ^{\mathfrak{so}}$, $\tau_m ^{\mathfrak{so}}$, and
$-\tau_m ^{\mathfrak{so}}$  in this direct sum are  invariants of the conjugate class.
\end{enumerate}

\end{thm}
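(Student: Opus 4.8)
The plan is to imitate the proof of Theorem \ref{spclass1} almost verbatim, performing throughout the substitution of the symmetric form $Q$ for the skew-symmetric form $\sigma$ and of orthogonal complements $L^\perp$ for skew-symmetric complements $L^\angle$. Thus, given a degree $-1$ endomorphism $\delta\in\mathfrak{so}(V)$ with $\delta^{l+1}=0$ and $\delta^l\neq 0$, I would first choose a graded complement $\mathcal C$ to $\ker\delta^l$ in $V$, form the associated bilinear form $b(u_1,u_2)=Q(u_1,\delta^l u_2)$, and set $Y_l=\mathcal C\oplus\delta\,\mathcal C\oplus\dots\oplus\delta^l\mathcal C$. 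The analogues of Lemmas \ref{lemb} and \ref{lemnon} --- nondegeneracy of $b$ and of $Q|_{Y_l}$ --- go through word for word, since their proofs use only the relation $Q(u_1,\delta^l v)=(-1)^l Q(\delta^l u_1,v)$ (valid because $\delta\in\mathfrak{so}(V)$) together with the nondegeneracy of $Q$. This yields an orthogonal splitting $V=\bigoplus_{i=1}^d Y_{l_i}$ with $l_1>\dots>l_d$ exactly as in \eqref{sympsymb3}, reducing everything to the case $V=Y_l$.

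The one genuine difference, and the source of the different index sets in the three parts, is a \emph{parity reversal}: because $Q$ is symmetric (whereas $\sigma$ was skew), the form $b$ now satisfies $b(u_1,u_2)=(-1)^l b(u_2,u_1)$, so $b$ is \emph{symmetric for even $l$ and skew-symmetric for odd $l$} --- exactly the opposite of the symplectic situation. Repeating the case analysis of Lemma \ref{spclasscor} with this reversed parity: in the ``diagonalizable'' case $\mathcal X\cap\mathcal X^\perp=0$ (where one shows $V=\mathcal X$), an \emph{odd} $l$ makes $b$ skew-symmetric and a Darboux basis of $b$ on the top-degree space decomposes $\delta$ into paired blocks of type $\delta_{s;2s}^{\mathfrak{so}}$ with half-integer $s$; an \emph{even} $l$ makes $b$ symmetric and a $b$-orthogonal basis decomposes $\delta$ into the odd-dimensional self-dual blocks $\tau_m^{\mathfrak{so}}$ over $\mathbb C$, or into blocks of type $\tau_m^{\mathfrak{so}}$ and $-\tau_m^{\mathfrak{so}}$ over $\mathbb R$, with multiplicities equal to the positive and negative inertia indices of $b$. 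The remaining ``isotropic'' case $\mathcal X\cap\mathcal X^\perp=\mathcal X$ is handled exactly as before --- the construction of the complementary minimal-degree subspace $K_2$ and the splitting \eqref{Zs3} require no change --- and produces the hyperbolic blocks $\delta_{s;l}^{\mathfrak{so}}$.

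Because the self-dual blocks $\tau_m^{\mathfrak{so}}$ are odd-dimensional, they can arise only when the grading is indexed by $\mathbb Z$; when the grading is indexed by $\frac12\mathbb Z_{\rm odd}$ every inherited sub-block is again half-integer-graded, so only blocks of type $\delta_{s;l}^{\mathfrak{so}}$ can occur, which gives part (1), while the two integer-graded cases over $\mathbb C$ and $\mathbb R$ give parts (2) and (3), the latter splitting the $\tau$-blocks by signature. Invariance of the multiplicities follows, as in the symplectic case, from the canonicity of the whole filtration-and-form construction together with the fact that over $\mathbb R$ the inertia indices of $b$ are themselves conjugation invariants. I expect the main obstacle to be purely organizational: verifying that each indecomposable block produced is compatible with the prescribed convention $(\Lambda_i)^\perp=\Lambda_{-i+1}$ (equivalently, that the $Q$-pairing is centered at degree $0$) and tracking how the parity of $l$ forces the index set of each block, rather than any new structural idea beyond the symplectic argument.
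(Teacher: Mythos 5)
Your proposal matches the paper's own argument, which consists precisely of the remark that the proof is identical to that of Theorem \ref{spclass1} except that the form $b$ of \eqref{formb} is now symmetric for even $l$ and skew-symmetric for odd $l$, so that Lemma \ref{spclasscor} must be modified accordingly. You carry out this parity reversal correctly (and in more detail than the paper, including the observation that the odd-dimensional blocks $\tau_m^{\mathfrak{so}}$ force integer grading), so the proposal is correct and takes essentially the same route.
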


Theorem \ref{soclass1} shows that, up to a conjugation, the only orthogonally indecomposable endomorphisms are the endomorphisms of the type $\delta_{s;l}^{\mathfrak{so}}$ and $\tau_m$ for $\mathbb K=\mathbb C$ and of the type $\delta_{s;l}^{\mathfrak{so}}$, $\tau_m^{\mathfrak{so}}$, and $-\tau_m^{\mathfrak{so}}$ in the case $\mathbb K=\mathbb R$.

The proof of Theorem \ref{soclass1} is identical to the proof of Theorem \ref{spclass1}. The only difference is that in the present case the form $b$, defined by \eqref{formb},
is symmetric if $l$ is even and skew-symmetric if $l$ is odd. Therefore Lemma \ref{spclasscor} should be modified appropriately.

\begin{rem}
\label{quivrem}
{\rm This remark is about a possible relation of the problem of classification of symbols to the theory of quiver representations (\cite{gabriel}, \cite{quiv}).
In the case $G=GL(W)$( or $SL(W)$) there is an obvious one-to-one correspondence between the set of symbols (obtained in Theorem \ref{glclass}) and the set of indecomposable representations of the quivers with underlying indirect graph equal to the Dynkin diagram of type $A_\ell$ (if one does not take into account possible shift in the range of indices in the graded space). It would be interesting to link the obtained classification of indecomposable symplectic and orthogonal symbols with representations of quivers with the corresponding Dynkin diagrams as underlying indirect graphs.} $\Box$
\end{rem}
\section{Computation of algebraic prolongation of symbols for classical groups}
\setcounter{equation}{0}
\setcounter{thm}{0}
\setcounter{lem}{0}
\setcounter{prop}{0}
\setcounter{cor}{0}
\setcounter{rem}{0}
\setcounter{exm}{0}
\label{algprolongsec}

\subsection{Decomposition of the universal prolongation algebra} First we point out some general properties of the universal algebraic prolongation in the case when $\g$ is a graded reductive Lie algebra. Let $\mathfrak m$ be a line in $\g_{-1}$ and $\ug(\mathfrak m)$ be the universal algebraic prolongation of $\mathfrak m$, as defined in subsection \ref{univsect}.
Take $\delta\in\mathfrak m$.
According to Jacobson-Morozov  theorem \cite[Ch.III, Th. 17]{jacob} (see also \cite{vinberg2} for complex graded Lie algebras and \cite{doubproj} for real graded case), we can complete $\delta$ by elements $H$ and $Y$ of degree $0$ and $1$  respectively (in $\g$) to the standard basis of a $\sll_2$-subalgebra of $\mathfrak g$, i.e such that
\begin{equation}
\label{sl2}
[H,\delta] = 2\delta,\ [H,Y]=-2Y,\ [\delta,Y]=H.
\end{equation}

Let $\ng_{\rm{max}}(\mathfrak m)$ be the largest ideal in $\ug(\mathfrak m)$ concentrated in the non-negative degree (i.e., $\ng_{\rm{max}}(\mathfrak m)\subset\sum_{i\ge 0}\ug_i$). Such ideal exists since the sum of any two ideals concentrated in the non-negative degree will also be an ideal of this type. It is also clear that $\ng_{\rm{max}}(\mathfrak m)$ is graded, i.e. $\ng_{\rm{max}}(\mathfrak m)=\sum_{i}\bigl(\ng_{\rm{max}}(\mathfrak m)\bigr)_i$, where $\bigl(\ng_{\rm{max}}(\mathfrak m)\bigr)_i=\ng_{\rm{max}}(\mathfrak m)\cap\g_i$.

On the one hand,  since  $[\delta,\ng_{\rm{max}}(\mathfrak m)]\subset \ng_{\rm{max}}(\mathfrak m)$ and relations \eqref{sl2} hold, we get that $\sll_2\cap\ng _{\rm{max}}(\mathfrak m)=0$, which implies that
\begin{equation}
\label{geq3}
\dim \ug(\mathfrak m)/\ng_{\rm{max}}(\mathfrak m)\geq 3.
\end{equation}
On the other hand, under the identification of the algebra $\ug(\mathfrak m)$ with the algebra of infinitesimal symmetries of a flat curve $F_\mathfrak m$ with the symbol $\mathfrak m$ (see subsection \ref{univsect}), to any element of $\ug(\mathfrak m)$ we can assign the vector field on the curve $F_\mathfrak m$. Consider the subspace $\ng_{ne}(\mathfrak m)$ of $\ug(\mathfrak m)$, consisting of all elements of $\ug(\mathfrak m)$ for which the corresponding vector fields on $F_\mathfrak m$ are identically zero. Clearly, $\ng_{ne}(\mathfrak m)$ is an ideal of $\ug(\mathfrak m)$. This ideal is called the \emph{non-effectiveness ideal of $\ug(\mathfrak m)$}. Then the quotient algebra $\ug(\mathfrak m)/\ng_{ne}(\mathfrak m)$
can be realized as a finite-dimensional Lie algebra of
vector fields on a curve. From the classical Sophus Lie result it follows that
\begin{equation}
\label{leq3}
\dim  \ug(\mathfrak m)/\ng_{ne}(\mathfrak m)\leq 3
\end{equation}
 (see the original proof in \cite{slie} , its translation and
commentary in \cite{herm}, and a self-contained proof in the recent paper \cite{eastslov}).
Since $\ng_{ne}(\mathfrak m)\subset \mathfrak n_{\rm{max}}(\mathfrak m)$ we get from \eqref{geq3} and \eqref{leq3} that $\ng_{\rm{max}}(\mathfrak m)=\ng_{ne}(\mathfrak m)$ and that  $\dim \ug(\mathfrak m)/\ng_{\rm max}(\mathfrak m)=3$.
The latter implies the following
\begin{prop}
\label{nmaxlem}
$\ug(\mathfrak m)$ is a semidirect sum of the constructed embedding of $\sll_2$ into $\mg$ and $\mathfrak n_{\rm{max}}(\mathfrak m)$ ($=\mathfrak n_{ne}(\mathfrak m)$). In particular, \emph{$\ng_{\rm{max}}$ is an $\sll_2$-module}.
\end{prop}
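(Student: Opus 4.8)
The plan is to combine the two facts just established before the statement — that $\dim\ug(\mathfrak m)/\ng_{\rm{max}}(\mathfrak m)=3$ and that $\sll_2\cap\ng_{\rm{max}}(\mathfrak m)=0$ — with the observation that the Jacobson--Morozov triple $\{\delta,H,Y\}$ in fact lies inside $\ug(\mathfrak m)$, not merely inside $\g$. The only step requiring a genuine (though short) verification is this last point, so I would carry it out first. We already have $\delta\in\mathfrak m=\ug_{-1}$. Since $\mathfrak m=\mathbb K\delta$ is one-dimensional, the defining condition \eqref{kprolong} for membership in $\ug_0$ reduces to checking that $[H,\delta]\in\ug_{-1}=\mathfrak m$; by the relations \eqref{sl2} one has $[H,\delta]=2\delta\in\mathfrak m$, so $H\in\ug_0$. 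Likewise $Y\in\ug_1$ provided $[Y,\delta]\in\ug_0$, and indeed $[Y,\delta]=-[\delta,Y]=-H\in\ug_0$ by the previous line. Hence the span $\sll_2=\langle\delta,H,Y\rangle$ is a graded subalgebra of $\ug(\mathfrak m)$.

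With this in hand the vector-space decomposition is immediate. From the preceding discussion $\ng_{\rm{max}}(\mathfrak m)$ is an ideal of $\ug(\mathfrak m)$ of codimension $3$, while $\sll_2$ is a $3$-dimensional subalgebra meeting it trivially. Therefore $\sll_2+\ng_{\rm{max}}(\mathfrak m)$ has dimension $\dim\ug(\mathfrak m)$, and we obtain
\begin{equation*}
\ug(\mathfrak m)=\sll_2\oplus\ng_{\rm{max}}(\mathfrak m)
\end{equation*}
as a direct sum of vector spaces.

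Finally, since $\ng_{\rm{max}}(\mathfrak m)$ is an ideal and $\sll_2$ is a complementary subalgebra, this vector-space decomposition is automatically a decomposition of Lie algebras into a semidirect sum $\ug(\mathfrak m)=\sll_2\ltimes\ng_{\rm{max}}(\mathfrak m)$, which is the first assertion. The ``in particular'' statement then follows at once: because $\ng_{\rm{max}}(\mathfrak m)$ is an ideal one has $[\sll_2,\ng_{\rm{max}}(\mathfrak m)]\subset\ng_{\rm{max}}(\mathfrak m)$, so the adjoint action of the subalgebra $\sll_2$ preserves $\ng_{\rm{max}}(\mathfrak m)$ and thereby endows it with the structure of an $\sll_2$-module. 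I do not anticipate any serious obstacle here: the substantive work — showing $\ng_{\rm{max}}(\mathfrak m)=\ng_{ne}(\mathfrak m)$ and that the quotient is exactly three-dimensional — was completed before the statement, and what remains is the bookkeeping above, whose only non-formal ingredient is confirming that the Jacobson--Morozov elements $H$ and $Y$ survive the passage from $\g$ to the prolongation $\ug(\mathfrak m)$.
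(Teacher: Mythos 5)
Your proof is correct and takes essentially the same approach as the paper, which states the proposition as an immediate consequence of the two facts established just before it ($\sll_2\cap\ng_{\rm max}(\mathfrak m)=0$ and $\dim\ug(\mathfrak m)/\ng_{\rm max}(\mathfrak m)=3$) without spelling out the bookkeeping. Your explicit verification via \eqref{kprolong} that $H\in\ug_0$ and $Y\in\ug_1$, so that the Jacobson--Morozov triple really sits inside $\ug(\mathfrak m)$ and not merely inside $\g$, is a detail the paper leaves implicit and is worth recording.
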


The latter fact is very useful in the description of the universal algebraic prolongations of symbols of (unparametrized) curves of flags.

\begin{rem}
\label{parprolongrem}
{\rm For parametrized curves the corresponding universal prolongation of  a symbol $\delta\in \mg_{-1}$ is equal to  a semidirect sum of $\mathbb K\delta$ and the non-effectiveness ideal $\mathfrak n_{ne}$}.
\end{rem}

\subsection{The case of $G=GL(W)$ ($SL(W)$)}
\label{algproglv}
We say that a graded space $V$ which is also $\sll_2$-module is a \emph{nice $\sll_2$-module}, if the corresponding embedding of $\sll_2$ into $\gl(V)$ is spanned by endomorphisms of degree $-1$, $0$, and $1$. Let $V_1$ and $V_2$ be two
nice $\sll_2$-modules.
Then $\Hom(V_1,V_2)=V_2\otimes V_1^*$ is the $\sll_2$-module  and a graded space in a natural way.
Denote by $\mathfrak n(V_1,V_2)$ the maximal $\sll_2$-submodule of $\Hom(V_1,V_2)$  concentrated in the non-negative degree part.

Assume that $\mathfrak m=\mathbb R \delta$. By Theorem \ref{glclass} there exists a map $N:\{(r,s)\in \mathbb Z\times\mathbb Z: r\leq s<0\}\rightarrow \mathbb N\cup\{\infty\}$ with finite support such that $\delta$ is conjugated to the endomorphism $D_N$, which is  the direct sum of indecomposable symbols where  $\delta_{rs}$ is repeated $N(r,s)$ times.
The endomorphism $D_n$ acts on the space
\begin{equation}
\label{Vn}
\mathcal V_N
= \displaystyle{\bigoplus_{r\leq s<0} V_{rs}\otimes \mathbb K^{N(r,s)}}.
\end{equation}
First, $D_N$ can be extended to a subalgebra of $\gl(\mathcal V_N)$ isomorphic
to $\sll_2$ which acts irreducibly on each $V_{rs}$.
From \eqref{Vn} it follows that
$$\gl(\mathcal V_N)=\mathcal V_N\otimes(\mathcal V_N)^*=
\displaystyle{\bigoplus_{\tiny{\begin{array}{l}r_1\leq s_1<0,\\ r_2\leq s_2<0\end{array}}}} \Hom (V_{r_1s_1}, V_{r_2 s_2})\otimes \Hom(\mathbb K^{N(r_1,s_1)},\mathbb K^{N(r_2,s_2)})
.$$ Second, by definition of $\ng(V_{r_1s_1},V_{r_2s_2})$ we have that
\begin{equation}
\label{posprel}
\mathfrak p=\displaystyle{\bigoplus_{\tiny{\begin{array}{l} r_1\leq s_1<0, \\ r_2\leq s_2<0\end{array}}}} \ng(V_{r_1s_1}, V_{r_2s_2})\otimes  \Hom(\mathbb K^{N(r_1,s_1)},\mathbb K^{N(r_2,s_2)})
\end{equation}
is the maximal $\sll_2$-module in $\gl(\mathcal V_N)$ concentrate in the non-negative degree part of  $\gl(\mathcal V_N)$.
Besides, $\mathfrak p$ is a subalgebra of $\gl(\mathcal V_N)$. Indeed, $[\mathfrak p,\mathfrak p]$ is an $\sll_2$-module  in $\gl(\mathcal V_N)$ concentrate in the non-negative degree, which implies that $[\mathfrak p,\mathfrak p]\subset \mathfrak p$, because $\mathfrak p$ is the maximal  $\sll_2$-module satisfying this property.  Therefore $p=\ng_{\rm max}(\mathbb K D_n)$.

It remains to describe   $\ng(V_{r_1,s_1},V_{r_2,s_2})$ more explicitly.
Set $l_i=s_i-r_i$, $i=1,2$.
Note that $\sll_2$-submodule $V_{r_2s_2}\otimes (V_{r_1s_1})^*$ is decomposed into the irreducible $\sll_2$-modules as follows:
\begin{equation}
\label{Vrsrs}
\Hom(V_{r_1s_1}, V_{r_2s_2})=V_{r_2s_2}\otimes (V_{r_1s_1})^*\cong\bigoplus_{i=0}^{\min\{
l_1,l_2\}}\Pi_{l_1+l_2-2i},
\end{equation}
where $\Pi_j$ denotes an irreducible $\sll_2$-module of dimension $j+1$ (see, for example, \cite{Fulton}). Moreover, the submodule
$\Pi_{l_1+l_2}$
of the largest dimension  is generated by the elements of highest (or lowest) degree in
$V_{r_2s_2}\otimes (V_{r_1s_1})^*$, which is equal to $s_2-r_1$ ($r_2-s_1$ respectively). The range of degrees for each next submodule in this decomposition is shrunk by $1$ from both left and right sides, i.e. the submodule $\Pi_{l_1+l_2-2i}$ has degrees varying from $r_2-s_1+i$ to  $s_2-r_1-i$. The submodule $\ng(V_{r_1s_1},V_{r_2s_2})$ is equal to the direct sum of submodules from the decomposition \eqref{Vrsrs} for which all degrees are non-negative. Therefore,
\begin{equation}
\label{ngrsrs}
\ng(V_{r_1s_1},V_{r_2s_2})\cong\bigoplus_{i=\max\{0, s_1-r_2\}}^{\min\{
l_1,l_2, s_2-r_1\}}\Pi_{l_1+l_2-2i}.
\end{equation}
In particular,
\begin{eqnarray}
&~&\label{rseq}\ng(V_{rs},V_{rs})\cong\Pi_0\cong\mathbb K\,\rm{Id},
\\
&~&\label{rsgeq}\ng(V_{r_1s_1},V_{r_2s_2})=0 \text{ if and only if } s_2<s_1 \text{ or } r_2<r_1.
\end{eqnarray}

Let us prove statement \eqref{rsgeq}. Indeed by \eqref{ngrsrs} $\ng(V_{r_1s_1},V_   {r_2s_2})=0$ if and  only if one of the following three conditions holds:
\begin{enumerate}
\item $s_1-r_2>s_1-r_1\equiv r_2<r_1$;
\item $s_1-r_2>s_2-r_2\equiv s_2<s_1$;
\item $s_1-r_2>s_2-r_1$,
\end{enumerate}
which proves the ``if'' part of \eqref{rsgeq}.
Further, if conditions (1) and (2) does not hold then the condition (3) does not hold as well, which proves the ``only if''  part.

Further, it is clear that
\begin{equation}
\label{tensorgl}
\mathfrak n(V_{r_1,s_1}\otimes\mathbb K^{N_1},V_{r_2,s_2}\otimes\mathbb K^{N_2})\cong
\mathfrak n(V_{r_1,s_1}, V_{r_2,s_2})\otimes {\rm Hom}\Bigl(\mathbb K^{N(r_1,s_1)},
 \mathbb K^{N(r_2,s_2)}\Bigr).
\end{equation}
Combining \eqref{posprel}, \eqref{rseq}, \eqref{rsgeq}, \eqref{tensorgl}, and Proposition  \ref{nmaxlem} we get the following
\begin{thm}
\label{prolongN}
The universal prolongation algebra $\ug(\mathbb K D_N)$ of the symbol $\mathbb K D_N$
is equal to the semidirect sum of the constructed embedding of $\sll_2$ into $\gl(V)$ and
$$
\bigoplus_{\tiny{\begin{array} {c} s_1\leq s_2,\, r_1\leq r_2,\\ r_j\leq s_j<0\end{array}}}\ng(V_{r_1s_1},V_{r_2s_2})\otimes {\rm Hom}\Bigl(\mathbb K^{N(r_1,s_1)},
 \mathbb K^{N(r_2,s_2)}\Bigr), 
$$
where  $\ng(V_{r_1s_1},V_{r_2s_2})$ is as in \eqref{ngrsrs}.
\end{thm}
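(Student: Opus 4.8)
The plan is to reduce everything to the structure already established for $\ng_{\rm{max}}$ and then to an explicit $\sll_2$-representation-theoretic computation inside $\gl(\mathcal V_N)$. By Proposition \ref{nmaxlem} the universal prolongation splits as a semidirect sum $\ug(\mathbb K D_N)=\sll_2\ltimes\ng_{\rm{max}}(\mathbb K D_N)$, where $\sll_2$ is the Jacobson--Morozov completion of $\delta=D_N$ acting irreducibly on each indecomposable block $V_{rs}$ via \eqref{sl2}. Hence it suffices to pin down $\ng_{\rm{max}}(\mathbb K D_N)$ as an explicit graded subspace of $\gl(\mathcal V_N)$, and the whole difficulty is concentrated there; the adjoint action of this $\sll_2$ turns $\gl(\mathcal V_N)$ into a finite-dimensional $\sll_2$-module whose $\mathbb Z$-grading is compatible with the $\sll_2$-action.

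First I would identify $\ng_{\rm{max}}(\mathbb K D_N)$ with the subspace $\mathfrak p$ of \eqref{posprel}. On the one hand, $\ng_{\rm{max}}(\mathbb K D_N)$ is an ideal of $\ug(\mathbb K D_N)$, hence stable under $\ad\,\sll_2$, i.e. it is an $\sll_2$-submodule of $\gl(\mathcal V_N)$ concentrated in non-negative degrees; by maximality of $\mathfrak p$ among such submodules this gives $\ng_{\rm{max}}(\mathbb K D_N)\subseteq\mathfrak p$. On the other hand $\mathfrak p$ is a subalgebra (its bracket is again an $\sll_2$-submodule in non-negative degrees, so it lands in $\mathfrak p$), whence $\sll_2\oplus\mathfrak p$ is a graded subalgebra of $\gl(\mathcal V_N)$ whose negative part is exactly $\mathbb K\delta=\mathbb K D_N$. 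Since $\delta$ has degree $-1$ while the remaining $\sll_2$-generators $H,Y$ have degrees $0,1$ and generate all of $\sll_2$ under $\ad\,\delta$, one has $\sll_2\cap\mathfrak p=0$; by the defining maximality of the universal prolongation, $\sll_2\oplus\mathfrak p\subseteq\ug(\mathbb K D_N)$. Comparing dimensions with Proposition \ref{nmaxlem} (which gives $\dim\ug(\mathbb K D_N)/\ng_{\rm{max}}=3$) forces $\dim\mathfrak p=\dim\ng_{\rm{max}}$, and together with the inclusion $\ng_{\rm{max}}\subseteq\mathfrak p$ this yields $\mathfrak p=\ng_{\rm{max}}(\mathbb K D_N)$.

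It then remains to make $\mathfrak p$ explicit block by block. Using \eqref{Vn} I would decompose $\gl(\mathcal V_N)=\mathcal V_N\otimes\mathcal V_N^*$ into the blocks $\Hom(V_{r_1s_1},V_{r_2s_2})\otimes\Hom(\mathbb K^{N(r_1,s_1)},\mathbb K^{N(r_2,s_2)})$; the multiplicity factor is a trivial $\sll_2$-module, so the maximal non-negative submodule of a block factors as in \eqref{tensorgl}, reducing the problem to computing $\ng(V_{r_1s_1},V_{r_2s_2})$. For the latter I would apply the Clebsch--Gordan decomposition \eqref{Vrsrs} of $V_{r_2s_2}\otimes V_{r_1s_1}^*$ into irreducibles $\Pi_{l_1+l_2-2i}$, and then track the $\mathbb Z$-degrees carried by each summand: the top module occupies degrees from $r_2-s_1$ to $s_2-r_1$, and each subsequent module is shrunk by one on either side. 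Selecting precisely those summands all of whose degrees are non-negative gives formula \eqref{ngrsrs}, and the two special facts \eqref{rseq}--\eqref{rsgeq} follow by inspection of the resulting index range. Finally, \eqref{rsgeq} shows that a block vanishes unless $s_1\leq s_2$ and $r_1\leq r_2$, so the sum \eqref{posprel} collapses to the index set in the statement, which is the claimed description of $\ng_{\rm{max}}$.

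I expect the main obstacle to be the degree bookkeeping in \eqref{Vrsrs}--\eqref{ngrsrs}: one must keep straight that the $\mathbb Z$-grading on $\gl(\mathcal V_N)$ is the negative of one half of the $\sll_2$-weight, and correctly determine, for each Clebsch--Gordan component, the exact interval of degrees it occupies in $\Hom(V_{r_1s_1},V_{r_2s_2})$ in order to decide which components survive the non-negativity cut. A secondary technical point is verifying $\sll_2\cap\mathfrak p=0$ and the subalgebra property of $\mathfrak p$ cleanly enough that the dimension comparison with Proposition \ref{nmaxlem} actually closes the identification $\mathfrak p=\ng_{\rm{max}}$.
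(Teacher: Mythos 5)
Your proposal is correct and follows essentially the same route as the paper: identify $\ng_{\rm{max}}(\mathbb K D_N)$ with the maximal non-negatively graded $\sll_2$-submodule $\mathfrak p$ of \eqref{posprel}, then compute each block $\ng(V_{r_1s_1},V_{r_2s_2})$ by Clebsch--Gordan decomposition and degree bookkeeping, discarding blocks via \eqref{rsgeq}. The only (harmless) variation is that you close the identification $\mathfrak p=\ng_{\rm{max}}$ by a dimension count against $\dim\ug/\ng_{\rm{max}}=3$, whereas the paper gets it directly from $\mathfrak p$ being a subalgebra together with the two maximality properties; both work.
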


In particular, if $\delta=\delta_{rs}$ then by \eqref{rseq} one has $\mathfrak n_{\rm{max}}(\delta)=\mathbb K\,{\rm Id}$
and $\mathfrak u(\delta)\cong \gl_2$. This can be applied to Example \ref{exm2} above.

\begin{rem}
\rm{
It is clear that in the case of $G=SL(W)$ the universal prolongation of a symbol consists of the traceless part of the universal prolongation of the same symbol for $G=GL(W)$.}
\end{rem}

\subsection{The case of $G=Sp(W)$ ($CSp(W)$) and $G=O(W)$  ($CO(W)$)}
First note that
in the case $G=CSp(W)$ ($G=CO(W)$) the universal algebraic prolongation of a symbol is equal to the direct sum of the universal algebraic prolongation of the same symbol for $G=Sp(W)$ ($G=O(W)$) with $\mathbb K$. Therefore it is sufficient to concentrate on the case $G=Sp(W)$ and $G=O(W)$.
We will primary treat the symplectic case and briefly indicate what changes should be done for the orthogonal case.
Let $V$ be a graded symplectic
space
 which is also a nice $\sll_2$-module such that the corresponding embedding of $\sll_2$ into $\gl(V)$ belongs to $\mathfrak {sp}(V)$
In this case we will say that $V$ is a \emph{nice symplectic
$\sll_2$-module}.
The symplectic algebra $\mathfrak{sp}(V)$
is a $\sll_2$-module and  a graded space in a natural way. Denote by $\mathfrak l^{\mathfrak{sp}}(V)$
the maximal $\sll_2$-submodule of $\mathfrak {sp}(V)$
concentrated in the non-negative degree part. From the maximality assumption it follows  that $\mathfrak l^{\mathfrak{sp}}(V)$
is a subalgebra of $\mathfrak {sp}(V)$.

Further, assume that
$V=\displaystyle{\bigoplus_{i=1}^q L_i}$,
where each $L_i$ is a graded symplectic
space and nice symplectic
$\sll_2$-modules (with all structures induced from $V$).
The restriction $\sigma|_{L_i}$ to $L_i$ of the symplectic form $\sigma$ of $V$
defines natural identification between $L_i$ and its dual space $L_i^*$.
Here $\sigma$ denotes the symplectic form on $V$
Consider the following splitting
of $\gl(V)$:
\begin{equation}
\label{glsplit}
\gl(V)=\bigoplus_{i,j=1}^q\Hom(L_i, L_j).
\end{equation}
An endomorphism  $A\in\gl(V)$, having the decomposition $A=\sum_{i,j=1}^q A_{ij}$ with respect to the splitting \eqref{glsplit},
belongs to $\mathfrak{sp}(V)$
if and only if $A_{ii}\in \mathfrak{sp}(L_i)$ for all $1\leq i\leq q$ and $A_{ij}=-A_{ji}^*$ for all $1\leq i\neq j\leq q$ (here the dual linear map
$A_{ji}^*$ is considered as a map from $L_i$ to $L_j$ under the aforementioned identification $L_i\sim L_i^*$ and $L_j\sim L_j^*$).
Therefore the map $A\mapsto \sum_{i=1}^q A_{ii}+\sum_{1\leq i<j\leq q}A_{ij}$ defines the identification
\begin{equation}
\label{idsp2}
\mathfrak{sp} \left(\bigoplus_{i=1}^q L_i\right)\cong \bigoplus_{i=1}^q \mathfrak{sp}(L_i)\oplus\bigoplus_{1\leq i<j\leq q}\Hom(L_i,L_j).
\end{equation}
Moreover,
 \begin{equation}
\label{idsp2pos}
\mathfrak{l}^{\mathfrak{sp}} \left(\bigoplus_{i=1}^q L_i\right)\cong \bigoplus_{i=1}^q \mathfrak{l}^{\mathfrak{sp}}(L_i)\oplus\bigoplus_{1\leq i<j\leq q}\mathfrak n(L_i,L_j),
\end{equation}
where $\mathfrak n(L_i,L_j)$ is as in the previous subsection.

Now take a symplectic 
symbol $\mathfrak m=\mathbb K\delta$.
Let $V_{s;l}^{\mathfrak{sp}}$ and $\mathcal L_m^{\mathfrak{sp}}$.
are graded symplectic
spaces as in items (D1) and (D2) of subsection \ref{symbsec}.
According to Theorem \ref{spclass1}
 $\delta$ is conjugated to
a direct sum of endomorphism of types $\delta_{s;l}^{\mathfrak{sp}}$, $\tau_m^{\mathfrak{sp}}$, and , in the case of $\mathbb K=\mathbb R$, also of type $-\tau_m^{\mathfrak{sp}}$.
Therefore one can always assume that $V=\displaystyle{\bigoplus_{i=1}^q }L_i$, where each $L_i$ is either equal to $V_{s;l}^{\mathfrak{sp}}$ or $\mathcal L_m^{\mathfrak{sp}}$.
In the symplectic case the endomorphism $\delta$ can be extended to a subalgebra of $\mathfrak {sp}\left(\displaystyle{\bigoplus_{i=1}^q L_i}\right)$ isomorphic
to $\sll_2$ such that if $L_i=V_{s;l}^{\mathfrak{sp}}$ then $V_{s;l}^{\mathfrak{sp}}$ (with respect to the induced action) is the sum of two irreducible $\sll_2$-submodules
\begin{equation}
\label{EFrs}
E_{s;l}
={\rm span} \{e_i\}_{s-l\leq i\leq s},\quad F_{s;l}
={\rm span} \{f_i\}_{-s\leq i\leq l-s},
\end{equation}
where $e_i$ and $f_i$ are as in \eqref{tuple1}, and if $L_i= \mathcal L_m^{\mathfrak{sp}}$, then
$\mathcal L_m^{\mathfrak{sp}}$ (with respect to the induced action) is an irreducible $\sll_2$-module.
By analogy with the previous subsection \emph{the universal algebraic prolongation $\mathfrak u(\mathbb R \delta)$ of the symbol $\mathbb R \delta$ is equal to the semidirect sum of the constructed  embedding of $\sll_2$ into $\mathfrak{sp}(V)$ 
and the algebra $\mathfrak l^{\mathfrak{sp}} \left(\bigoplus_{i=1}^q L_i\right)$}.

The orthogonal case is treated in completely the same way. We only need to replace everywhere from the beginning of this subsection the word ``symplectic''  by the word `` orthogonal", the sign $\mathfrak {sp}$ by the sign $\mathfrak {so}$, and the symplectic form $\sigma$ by a non-degenerate symmetric form $Q$.

By identification \eqref{idsp2pos} and the analogous formula for the orthogonal case
in order to compute
$\mathfrak l^{\mathfrak{sp}} \left(\bigoplus_{i=1}^q L_i\right)$
($\mathfrak l^{\mathfrak{so}}\left(\bigoplus_{i=1}^q L_i\right)$
and, consequently, the universal prolongation $\mathfrak u(\mathbb R \delta)$ in $\mathfrak{sp}(V)$ ($\mathfrak{so}(V)$) it is sufficient to compute spaces $\mathfrak l^{\mathfrak{sp}}(V)$
($\mathfrak l^{\mathfrak{spo}}(V)$)
and $\Hom(V_1, V_2)$, where $V$, $V_1$, and $V_2$ are equal
either to $V_{l;s}^{\mathfrak{sp}}$ or $\mathcal L_m^{\mathfrak{sp}}$
(either to $V_{l;s}^{\mathfrak{so}}$ or $\mathcal L_m^{\mathfrak{so}}$).

For this first denote by $\mathfrak n_{(r_1, s_1),(r_2, s_2)}$  the righthand side of \eqref{ngrsrs},
\begin{equation}
\label{sumpi}
\mathfrak n_{(r_1, s_1),(r_2, s_2)}= \bigoplus_{i=\max\{0, s_1-r_2\}}^{\min\{
l_1,l_2, s_2-r_1\}}\Pi_{l_1+l_2-2i},
\end{equation}
where $l_i=s_i-r_i$.

\subsubsection{Description of $\mathfrak l^{\mathfrak{sp}}(V_{s;l}^{\mathfrak{sp}})$ and $\mathfrak l^{\mathfrak{so}}(V_{s;l}^{\mathfrak{so}})$}
\label{case1}
\indent
\medskip

First let us give a convenient description of the algebras $\mathfrak
{sp}\Bigl(V_{s;l}^{\mathfrak{sp}}\Bigr)$ and $\mathfrak
{so}\Bigl(V_{s;l}^{\mathfrak{so}}\Bigr)$.
In the sequel we will use the same notation for $E_{s;l}
$ and $F_{s;l}
$ in the orthogonal case, as in \eqref{EFrs}.
The symplectic form or  the non-degenerate symmetric form define the following natural identifications:
 $E_{s;l}
\cong (F_{s;l}
)^*$ and
$F_{s;l}
\cong(E_{s;l}
)^*$.
Keeping in mind these identifications, an endomorphism $A\in\gl(V)$, having decomposition $A=A_{11}+A_{22}+A_{12}+A_{21}$ with respect to the splitting $$\gl(V_{s;l}^{\mathfrak{sp}})=\gl(E_{s;l}
)\oplus\gl(F_{s;l}
)\oplus\Hom(F_{s;l}
,E_{s;l}
)\oplus \Hom(E_{s;l}
,F_{s;l}
),$$ belongs to $\mathfrak{sp}\Bigl(V_{s;l}^{\mathfrak{sp}}\Bigr)$ if and only if $A_{12}^*=A_{12}$, $A_{21}^*=A_{21}$, and $A_{22}=-A_{11}^*$. It belongs to $\mathfrak{so}\Bigl(V_{s;l}^{\mathfrak{so}}\Bigr)$ if and only if $A_{12}^*=-A_{12}$, $A_{21}^*=-A_{21}$, and $A_{22}=-A_{11}^*$. Therefore the map $A\mapsto A_{11}+A_{22}+A_{12}$ defines the following identifications
\begin{eqnarray}
&~&\label{idsp1}
\mathfrak{sp}(V_{s;l}^{\mathfrak{sp}})\cong \gl(E_{s;l}
)\oplus S^2(E_{s;l}
)\oplus S^2 (F_{s;l}
),\\
&~&\label{idso1}
\mathfrak{so}(V_{s;l}^{\mathfrak{so}})\cong \gl(E_{s;l}
)\oplus S^2(E_{s;l}
)\oplus \wedge^2 (F_{s;l}
),
\end{eqnarray}
where  $S^2(E_{s;l}
)$ and $S^2(F_{s;l}
)$ denote the symmetric square of $E_{s;l}
$ and $F_{s;l}
$, respectively, and $\wedge^2(E_{s;l}
)$ and $\wedge^2(F_{s;l}
)$ denote the skew-symmetric square of $E_{s;l}
$ and $F_{s;l}
$.

Here $S^2(E_{s;l}
)$ and $\wedge^2(E_{s;l}
)$ are subspaces of
$\Hom(F_{s;l}
, E_{s;l}
)$; $S^2(F_{s;l}
)$ and $\wedge^2(F_{s;l}
)$ are subspaces of $\Hom(E_{s;l}
, F_{s;l}
)$.
Keeping this in mind, we define $\mathfrak{l}_{s;l}^{\mathfrak{sp},1}$ ($\mathfrak{l}_{s;l}^{\mathfrak{so},1}$) as the maximal $\sll_2$-submodule of
$S^2(E_{s;l}
)$ ($\wedge^2(E_{s;l}
)$)
concentrated in the non-negative degree part of $S^2(E_{s;l}
)$ ($\wedge^2(E_{s;l}
)$).
Similarly, let $\mathfrak{l}_{s;l}^{\mathfrak{sp},2}$ ($\mathfrak{l}_{s;l}^{\mathfrak{so},2}$) be the maximal $\sll_2$-submodule of
$S^2(F_{s;l}
)$ ($\wedge^2(E_{s;l}
)$).
Then from \eqref{idsp1}
\begin{eqnarray}
&~&\label{lrse1}
\mathfrak{l}^{\mathfrak sp}(V_{s;l}^{\mathfrak{sp}})\cong \mathfrak n_{(s-l,s),(s-l,s)}\oplus \mathfrak{l}_{s;l}^{\mathfrak{sp},1}\oplus \mathfrak{l}_{s;l}^{\mathfrak{sp},2}\cong\mathbb K\oplus \mathfrak{l}_{s;l}^{\mathfrak{sp},1}\oplus \mathfrak{l}_{s;l}^{\mathfrak{sp},2}\\
&~&\label{krse1}
\mathfrak{l}^{\mathfrak{so}}(V_{s;l}^{\mathfrak{so}})\cong
\mathfrak n_{(s-l,s),(s-l,s)}\oplus \mathfrak{l}_{s;l}^{\mathfrak{so},1}\oplus \mathfrak{l}_{s;l}^{\mathfrak{so},2}\cong\mathbb K\oplus \mathfrak{l}_{s;l}^{\mathfrak{so},1}\oplus \mathfrak{l}_{s;l}^{\mathfrak{so},2}
\end{eqnarray}

Let us describe   $\mathfrak{l}_{s;l}^{\mathfrak{sp},1}
$, $\mathfrak{l}_
{s;l}^{\mathfrak{sp},2}
$
$\mathfrak{l}_
{s;l}^{\mathfrak{so},1}
$ and
$\mathfrak{l}_
{s;l}^{\mathfrak{so},2}
$.
Let $l=s-r$. In order to describe $\mathfrak{l}_
{s;l}^{\mathfrak{sp},1}$ and $\mathfrak{l}_
{s;l}^{\mathfrak{so},1}
$
note that $\sll_2$-submodules $S^2(E_{s;l}
)$ and $\wedge^2(E_{s;l}
)$) are decomposed into the irreducible $\sll_2$-modules as follows:
\begin{eqnarray}
&~&S^2(E_{s;l}
)=\bigoplus_{i=0}^{
[\frac{l}{2}]}\Pi_{2l-4i}
\label{Vrseps};\\
&~&\label{Mrseps}
\wedge^2(E_{s;l}
)=\bigoplus_{i=0}
^{[\frac{l-1}{2}]}\Pi_{2l-2-4i}.
\end{eqnarray}
(see, for example, \cite{Fulton}).

The submodule
$\Pi_{2l}$
of the largest dimension in $S^2(E_{s;l}
)$ is generated by the elements of highest or lowest degree in
$S^2(E_{s;l}
)$, which are equal to $2s$ and $2s-2l$, respectively. The range of degrees for each next submodule in the decomposition \eqref{Vrseps} is shrunk by $2$ from both left and right sides, i.e. the submodule $\Pi_{2l-4i}$ has degrees varying from 
$2s-2l+2i$ and $2s-2i$. The submodule $\mathfrak{l}_{s;l}^{\mathfrak{sp},1}$ is equal to the direct sum of the submodules from the decomposition \eqref{Vrseps} for which all degrees are non-negative, i.e. for which
$\max\{0, l-[s]\}\leq i\leq \min\{
[\frac{l}{2}],
[s]\}$.
Since $l\leq 2[s]$, we have that $[\frac{l}{2}]\leq [s]$.
Therefore

\begin{equation}
\label{ngrseps}
\mathfrak{l}_{s;l}^{\mathfrak{sp},1}=\bigoplus_{i=\max\{0,
l-[s]\}}^{
[\frac{l}{2}]}\Pi_{2l-4i},
\end{equation}
Note that from the condition 
$l\leq 2[s]$
and the last formula it follows that  $\mathfrak{l}_{s;l}^{\mathfrak{sp},1}\neq 0$.

In order to get  $\mathfrak{l}_{l;s}^{\mathfrak{sp},2}$ we have to replace $s$
by $l-s$
in the righthand side of \eqref{ngrseps}, i.e.  $$\mathfrak{l}_{l;s}^{\mathfrak{sp},2}=\bigoplus_{i=\max\{0, l-[l-s]]\}}^
{[\frac{l}{2}]}\Pi_{2l-4i},$$
Again from the fact that
$l<2[s]$ it follows that
\begin{equation}
\label{ngrseps2}
\mathfrak{l}_{rs}^{\varepsilon,2}=\begin{cases} \mathbb K&\text{if \text{$l$ is even and }}
s=\frac{l}{2},\\
0&\text{otherwise}.
\end{cases}
\end{equation}
Besides, by \eqref{rseq} one has  $\mathfrak n_{(s-l,s),(s-l,s)}=\mathbb K$. Substituting this and relation \eqref{ngrseps2}
into \eqref{ngrseps} we get that in the case  $l$ is odd or $s\neq \frac{l}{2}$
\begin{equation}
\label{lrse1fin}
\mathfrak{l}(V_{s;l}^{\mathfrak{sp}})\cong \mathbb K\oplus \mathfrak{l}_{s;l}^{\mathfrak{sp},1},
\end{equation}
where $\mathfrak{l}_{s;l}^{\mathfrak{sp},1}$ is as in  \eqref{ngrseps}.
Finally, by  \eqref{ngrseps} we get $\mathfrak{l}_{p;2p}^{\mathfrak{sp},1}=\mathbb K$ for any nonnegative integer $p$. From this, formula  \eqref{ngrseps2}, and identification
\eqref{lrse1}  it is easy to see that for any nonnegative integer $p$
\begin{equation}
\label{lrse1spec}
 \mathfrak l^{\mathfrak {sp}}(V_{p;2p}^{\mathfrak{sp}})\cong \sll_2.
\end{equation}

Similarly in the orthogonal case the submodule
$\Pi_{2l-2}$
of the largest dimension in $\wedge^2(E_{s;l}
)$ is generated by the elements of highest or lowest degree in
$\wedge^2(E_{s;l}
)$, which is equal to $2s-1$ and $2s-2l+1$ respectively. The range of degrees for each next submodule in the decomposition \eqref{Mrseps} is shrunk by $2$ from both left and right sides, i.e. the submodule $\Pi_{2l-2-4i}$ has degrees varying from $2s-2l+1+2i$ to $2s-1-2i$. The submodule $\mathfrak{l}_{s;l}^{\mathfrak{so},1}$ is equal to the direct sum of submodules from the decomposition \eqref{Mrseps} for which all degrees are non-negative, i.e. for which $\max\{0,
l-[s+\frac{1}{2}]
\}\leq i\leq \min\{
[\frac{l-1}{2}],
[s-\frac{1}{2}]
\}$. Since $l\leq 2(s+\{s\})-1$, we have that $[\frac{l-1}{2}]\leq
[s-\frac{1}{2}]$. Thus

\begin{equation}
\label{ngkrseps}
\mathfrak{l}_{s;l}^{\mathfrak{so},1}=\bigoplus_{i=\max\{0,l-[s+\frac{1}{2}] ]\}}^{
[\frac{l-1}{2}]}\Pi_{2l-2-4i},
\end{equation}
Note that from the condition
$l\leq 2(s+\{s\})-1$
and the last formula it follows that  $
\mathfrak{l}_{s;l}^{\mathfrak{so},1}
\neq 0$.

In order to get  $
\mathfrak{l}_{s;l}^{\mathfrak{so},2}
$ we have to replace  $s$
by $l-s$
in the righthand side of \eqref{ngkrseps}, i.e.
$$\mathfrak{l}_{s;l}^{\mathfrak{so},2}
=\bigoplus_{i=\max\{0, l-[l-s+\frac{1}{2}]\}}^
{[\frac{l-1}{2}]}\Pi_{2l-2-4i},$$
Again from the fact that  $l\leq 2(s+\{s\})-1
$ it follows that
\begin{equation}
\label{ngkrseps2}
\mathfrak{l}_{s;l}^{\mathfrak{so},2}=\begin{cases} \mathbb K&\text
{$l$ is odd and }
s=\frac{l}{2},\\
0&\text{otherwise}.
\end{cases}
\end{equation}
Similarly to \eqref{lrse1fin}
in the case  $l$ is even or $s\neq \frac{l}{2}$ we get that
\begin{equation}
\label{krse1fin}
\mathfrak{l}^{\mathfrak{so}}(V_{s;l}^{\mathfrak{so}})
\cong \mathbb K\oplus
\mathfrak{l}_{s;l}^{\mathfrak{so},1},
\end{equation}
where $\mathfrak{l}_{s;l}^{\mathfrak{so},1}$ is as in  \eqref{ngkrseps}.
Finally, by  \eqref{ngkrseps} we get $\mathfrak{l}_{p-\frac{1}{2};2p-1}^{\mathfrak{so},1}=\mathbb K$ for any $p\in\mathbb N$. From this, formula  \eqref{ngkrseps2}, and identification
\eqref{krse1}  it is easy to see that
\begin{equation}
\label{krse1spec}
\mathfrak{l}^{\mathfrak{so}}(V_{p-\frac{1}{2};2p-1}^{\mathfrak{so}})
\cong \sll_2,\quad p\in\mathbb N.
\end{equation}

\subsubsection{Description of $\mathfrak l^{\mathfrak{sp}}(\mathcal L_m^{\mathfrak{sp}})$ and $\mathfrak l^{\mathfrak{so}}(\mathcal L_m^{\mathfrak{so}})$}
\label{case2}
\indent
\medskip

Note that $\mathfrak l^{\mathfrak{sp}}(\mathcal L_m^{\mathfrak{sp}})\subset \mathfrak n(\mathcal L_m^{\mathfrak{sp}}, \mathcal L_m^{\mathfrak{sp}})\cong \mathbb K {\rm Id}$, but $\rm {Id}\notin \mathfrak{sp}(\mathcal L_m^{\mathfrak{sp}})$. In the same way $\mathfrak l^{\mathfrak{so}}(\mathcal L_m^{\mathfrak{so}}
)\subset \mathfrak n(\mathcal L_m^{\mathfrak{so}}, \mathcal L_m^{\mathfrak{so}})\cong \mathbb K {\rm Id}$, but $\rm {Id}\notin \mathfrak{so}(\mathcal L_m^{\mathfrak{so}})$
Therefore
\begin{equation}
 \label{splm}
 \mathfrak l^{\mathfrak{sp}}(\mathcal L_m^{\mathfrak{sp}})=0, \quad \mathfrak l^{\mathfrak{so}}(\mathcal L_m^{\mathfrak{so}})=0.
 \end{equation}

\subsubsection{Description of $\mathfrak n(V_{s_1;l_1}^\mathfrak{sp}, V_{s_2; l_2}^\mathfrak{sp})$ (or
of $\mathfrak n(V_{s_1;l_1}^\mathfrak{so}, V_{s_2; l_2}^\mathfrak{so}
)$)}
\label{case3}
\indent
\medskip

By Remark \ref{halfintrem} we are interested only in the cases when both $s_1$ and $s_2$ are integers or both $s_1$ and $s_2$ belong to $\frac{1}{2}\mathbb Z_{\rm{odd}}$. Set $r_1=s_1-l_1$ and $r_2=s_2-l_2$. In the sequel $\varepsilon$ will mean other $\mathfrak{sp}$ or $\mathfrak{so}$.
Note that
\begin{equation*}
\Hom(V_{s_1;l_1}
^\varepsilon, V_{s_2; l_2}^\varepsilon
)=\Hom( E_{s_1;l_1}
, E_
{s_2;l_2}
)\oplus
\Hom( F_{s_1;l_1}
, F_
{s_2;l_2}
)
\oplus
\Hom( E_{s_1;l_1}
, F_
{s_2;l_2}
)
\oplus
\Hom( F_{s_1;l_1}
, E_
{s_2;l_2}
).
\end{equation*}
Therefore,
\begin{equation}
\label{nr1s1r2s2}
\mathfrak n(V_{s_1;l_1}^{\varepsilon}, V_{s_2; l_2}^\varepsilon)
\cong \ng_{(r_1,s_1),(r_2,s_2)}\oplus \ng_{(-s_1 ,-r_1),(-s_2,-r_2)}
\oplus \ng_{(r_1,s_1),(-s_2,-r_2)}\oplus \ng_{(-s_1,-r_1),(r_2,s_2)}
\end{equation}
Often many terms in the righthand side of \eqref{nr1s1r2s2} vanish.
First, let us analyze the term $\ng_{(r_1,s_1),(-s_2,-r_2)}$. By \eqref{rsgeq} if 
$\ng_{(r_1,s_1),(-s_2,-r_2)}
\neq 0$, then $r_1\leq -s_2$ and $s_1\leq -r_2$ or, equivalently, $s_1-l_1\leq -s_2$ and $s_1\leq l_2-s_2$.
In the symplectic case, in addition we have  $l_1\leq 2[s_1]$ and $l_2\leq 2[s_2]$. Using all four inequalities we get that $s_1=s_2$, $s_1,s_2$ are integers and $l_1=l_2=2s_1$. This together with \eqref{rseq} implies that
\begin{equation}
\label{ng3}
\ng_{(r_1,s_1),(-s_2,-r_2)}=\ng_{(s_1-l_1,s_1),(-s_2,l_2-s_2)}=\begin{cases} \mathbb K&  \text{if } s_1=s_2,\,\, s_1, s_2
\in\mathbb Z,\,\, l_1=l_2=2s_1\\
0&\text{otherwise}.
\end{cases}
\end{equation}
In the orthogonal case, in addition to $s_1-l_1\leq -s_2$ and $s_1\leq l_2-s_2$ we have  $l_1\leq 2(s_1+\{s_1\})-1$ and
$l_2\leq 2(s_2+{s_2})-1]$. Using all four inequalities we get that $s_1=s_2$,  $s_1,s_2$ belongs to $\frac{1}{2}\mathbb Z_{\rm{odd}}$, and $l_1=l_2=2s_1$. So, we will have the same formula as \eqref{ng3} except that $s_1, s_2$ belongs to $\frac{1}{2}\mathbb Z_{\rm{odd}}$.

Further, by \eqref{rsgeq}
\begin{equation}
\label{01}
\ng_{(-s_1,-r_1),(-s_2,-r_2)}=0 \text{ if and only if } r_1<r_2 \text{ or } s_1<s_2.
\end{equation}
Substituting formulas \eqref{ng3} (or its analog for the orthogonal case), \eqref{rsgeq}, \eqref{01} into \eqref{nr1s1r2s2}  and using \eqref{sumpi} for nonzero terms, one gets an explicit expression
for $\mathfrak n(V_{s_1;l_1}^\mathfrak{sp}, V_{s_2; l_2}^\varepsilon)$
. In particular, if $(r_1,s_1,r_2,s_2)\neq
(-s,s, -s,s)$, then at least two of the first three terms in  the right handside of \eqref{nr1s1r2s2} vanish. Finally, it is easy to see that
\begin{equation}
\label{ngrec}
\ng(V_{s;2s}^\varepsilon, V_{s; 2s}^\varepsilon)\cong \mathfrak{gl}_2, \quad \varepsilon=\mathfrak{sp}\text{ or } \mathfrak{so}.
\end{equation}


\subsubsection{Description of $\mathfrak n(V_{s;l}^{\mathfrak{sp}}, \mathcal L_m^{\mathfrak{sp}})$
and of $\mathfrak n(V_{s;l}^{\mathfrak{so}}, \mathcal L_m^{\mathfrak{so}})$}
\label{case4}
\indent
\medskip

By analogy with the previous case
\begin{equation}
\label{nrslm}
\mathfrak n(V_{s;l}^
\varepsilon, \mathcal L_m^
\varepsilon)=\ng_{(s-l,s),(-m,m)}+\ng_{(-s,l-s),(-m,m)}
\end{equation}
First, let us analyze the term  $\ng_{(s-l,s),(-m,m)}$. By \eqref{rsgeq} if $\ng_{(s-l,s),(-m,m)}\neq 0$, then $s-l\leq -m$ and $s\leq m$.  By Remark \ref{halfintrem}, $s\in \frac{1}{2}\mathbb Z_{\rm{odd}}$ in the symplectic case and $s\in\mathbb Z$ in the orthogonal case,  and therefore $l\leq 2s-1$ in both cases,
which together with the previous inequalities implies that $-s+1\leq s-l\leq -m\leq -s$, which is impossible.
Consequently, for any admissible triple $(l,s,m)$
we get
$\ng_{(s-l,s),(-m,m)}=0,$
i.e.
\begin{equation}
\label{ngl}
\mathfrak n(V_{s;l}^{\varepsilon}, \mathcal L_m)=\ng_{(-s,l-s),(-m,m)}, \quad \varepsilon=\mathfrak{so} \text{ or } \mathfrak {sp}.
\end{equation}
Besides, $\ng_{(-s,l-s),(-m,m)} \neq 0$ if and only if $l-s\leq m\leq s$
and it can be computed using formula \eqref{sumpi}.


\subsubsection{Description of $\mathfrak n(\mathcal L_{m_1}^{\rm{sp}}, \mathcal L_{m_2}^{\rm{sp}})$ (or of $\mathfrak n(\mathcal L_{m_1}^{\rm{so}}, \mathcal L_{m_2}^{\rm{so}})$)}
\label{case5}
\indent
\medskip

By \eqref{rsgeq}, both for $\varepsilon=\mathfrak{sp}$ and $\varepsilon=\mathfrak{so}$ we have

\begin{equation}
\label{ll}
\mathfrak n(\mathcal L_{m_1}^\varepsilon, \mathcal L_{m_2}^\varepsilon)=\ng_{(-m_1,m_1), (-m_2,m_2)} =\begin{cases} \mathbb K&  \text{if }m_1=m_2\\
0&  \text{if }m_1\neq m_2.
\end{cases}
\end{equation}
\subsubsection{The case of tensor products}
\label{case6}
\indent
\medskip
\label{condGsubsec}

In order to use formula \eqref{idsp2pos} more effectively it is worth also to say more about the space $\mathfrak l^{\mathfrak{sp}}(V\otimes \mathbb K^N)$, where $V$ is a  graded symplectic space which is also a nice symplectic $\sll_2$-module and   $V\otimes \mathbb K^N$ inherits from $V$ the structure of a graded symplectic space and of a
nice symplectic  $\sll_2$-module in a natural way. From \eqref{idsp2} it is easy to get the following natural identification:

\begin{equation}
\label{idspkN}
\mathfrak{sp}(V\otimes\mathbb K^N)\cong (\mathfrak{sp}(V)\otimes \mathbb K^N)\oplus(\mathfrak{gl}(V)\otimes\wedge^2\mathbb K^N),
\end{equation}
where $\wedge^2K^N$ denotes a skew-symmetric square of $\mathbb K^N$.
Consequently,
 \begin{equation}
\label{idspkNpos}
\mathfrak{l}^\mathfrak{sp}(V\otimes\mathbb K^N)\cong (\mathfrak{l}^\mathfrak{sp}(V)\otimes \mathbb K^N)\oplus(\mathfrak{n}(V)\otimes\wedge^2K^N).
 \end{equation}
Combining \eqref{tensorgl},\eqref{idspkNpos}, and the calculations of the subsections \ref{case1}-\ref{case5}, one can get more compact explicit formula for the algebraic prolongation of a given symplectic symbol.

In particular, from \eqref{splm} and \eqref{ll} it follows that
\begin{equation}
\label{idsplmN}
\mathfrak{l}^{\mathfrak{sp}}(\mathcal L_m^{\mathfrak{sp}}\otimes\mathbb K^N)\cong \wedge^2K^N\cong\mathfrak{so}(N).
\end{equation}

 Now assume that $K=\mathbb R$. Let $V$ be a nice symplectic $\sll_2$-module  such that as a graded space it coincides with $\mathcal L_m^{\mathfrak{sp}}\otimes \mathbb R^{N_++N_-}$ for some nonnegative integers $N_+$ and $N_-$ and the degree $-1$ component of the corresponding embedding of $\sll_2$ into $\mathfrak {sp}(V)$ is generated by the direct sum of $\tau_m^{\mathfrak{sp}}$ taken $N_+$ times and $-\tau_m^{\mathfrak{sp}}$ taken $N_-$ times. Then similarly to above it can be shown that

\begin{equation}
\label{idsplmN+-}
\mathfrak{l}(V)\cong\mathfrak{so}(N_+, N_-).
\end{equation}

Further, fix two functions $N_+, N_-:\frac{1}{2}\mathbb Z_{\rm {mod}}\mapsto \mathbb N\cup\{0\}$ with finite support and assume that the symplectic symbol $\mathfrak m$ is generated by the direct sum of endomorphisms of type $\tau_m^{\mathfrak{sp}}$ and $-\tau_m^{\mathfrak{sp}}$, where $\tau_m^{\mathfrak{sp}}$ appears $N_+(m)$ times and
$-\tau_{m}^{\mathfrak{sp}}$ appears $N_-(m)$ times in this sum for each $m\in \mathbb N$. These symbols correspond to curves in a Lagrangian Grassmannian satisfying condition (G) in the terminology of the previous papers of the second author with C. Li (\cite{zelli1,zelli2}).
Then from \eqref{idsplmN+-} and \eqref{ll} it follows that the non-effectiveness ideal of $\mathfrak u(\mathfrak m)$ is equal to $\displaystyle{\bigoplus_{m\in\mathbb N} \mathfrak{so}(N_+(m), N_-(m))}$ and it is concentrated in the zero component of $\mathfrak u(\mathfrak m)$. From this and Remark \ref{parprolongrem} it follows that in the parameterized case $\mathfrak u_+(\mathfrak m)=\displaystyle{\bigoplus_{m\in\mathbb N} \mathfrak{so}(N_+(m), N_-(m))}$ and, by Theorem \ref{mainthm} and Remark \ref{princrem}, if the normalization condition is fixed, then for any parametrized curve of flags with symbol $\mathfrak m$ there exists a unique principle bundle of moving frames with the structure group ${\displaystyle{\prod_{m\in\mathbb N}} O(N_+(m), N_-(m))}$ and the unique principle Ehresmann connection such that this connection satisfies the chosen normalization condition. This result was proved
in \cite{zelli1,zelli2} ( Theorem 1 in \cite{zelli1} and Theorems 1 and 3 in \cite{zelli2}) for specifically chosen normalization conditions. So, our main Theorem \ref{mainthm} gives more conceptual point of view on the constructions of those papers and clarifies them algebraically.

Finally, formulas \eqref{idspkNpos}-\eqref{idsplmN+-} hold true if $V$ is a graded orthogonal space and $\mathfrak{sp}$ is replaced by $\mathfrak{so}$.
Also, the same conclusion as in the previous paragraph can be done if an orthogonal symbol  is generated by the direct sum of endomorphisms of type $\tau_m^{\mathfrak{so}}$ and $-\tau_m^{\mathfrak{so}}$.

%


\begin{thebibliography}{99}



\bibitem{agrgam1}
A.A. Agrachev, R.V. Gamkrelidze, \emph{Feedback-invariant optimal
control theory - I. Regular extremals}, J. Dynamical and
Control Systems, {\bf 3}(1997), No. 3, 343-389.

\bibitem{agrachev}
A.A. Agrachev, Feedback-invariant optimal control theory - II.
Jacobi Curves for Singular Extremals, J. Dynamical and Control
Systems, {\bf 4}(1998), No. 4 , 583-604.

\bibitem{jac1}
A. Agrachev, I. Zelenko, {\sl Geometry of Jacobi curves.
I}, J. Dynamical and Control systems, {\bf 8}(2002),No. 1, 93-140.


\bibitem{jac2} A. Agrachev, I. Zelenko, {\sl Geometry of Jacobi curves
. II}, J. Dynamical and Control systems,{\bf 8}(2002), No. 2,
167-215.

\bibitem{princjac}
A. Agrachev, I. Zelenko, {\sl Principle Invariants of
Jacobi curves}, In the book: Nonlinear Control in the Year 2000,
v.1, A. Isidori, F. Lamnabhi-Lagarrigue $\&$ W.Respondek, Eds,
Lecture Notes in Control and Information Sciences 258, Springer,
2001, 9-22.

\bibitem{herm}
M. Ackerman and R. Hermann, \emph{Sophus Lie's 1880 Transformation Group Paper},
Math Sci Press 1975.

\bibitem{quiv} H. Derksen, J. Weyman, {\sl Quiver Representations},
Notices of AMS,  {\bf 52}, 2005, Number 2, 200-206.

\bibitem{cartan}
E. Cartan, \emph{La th\`eorie des groupes finis et continus et la g\`eom\`etrie diff\`erentielle trait\'es par la m\'ethode du rep\'ere mobile}, Cahiers Scientifiques, Vol. 18, Gauthier-Villars,Paris, 1937.

\bibitem{doubproj}
B. Doubrov, \emph{Projective parametrization of homogeneous curves}, Arch. Math. (Brno) 41 (2005), no. 1, 129--133.

\bibitem{doub3} B.~Doubrov, \emph{Generalized Wilczynski invariants for non-linear ordinary differential equations},
In: Symmetries and Overdetermined Systems of Partial Differetial Equations, IMA
Volume 144, Springer, NY, 2008, pp.~25--40

\bibitem{doubkom}
B. M. Doubrov, B. P. Komrakov, \emph{Classification of homogeneous
submanifolds in homogeneous spaces,} Lobachevskii J. Math., 1999,
Volume 3, Pages 19-38.

\bibitem{doubmor}
B. Doubrov, Y. Machida, and T. Morimoto, \emph{Linear equations on filtered manifolds
and submanifolds of flag varieties}, preprint.

\bibitem{doubzel1}
B. Doubrov, I. Zelenko, \emph{ A canonical frame for nonholonomic
rank two distributions of maximal class},  C.R. Acad. Sci. Paris,
Ser. I, Vol. 342, Issue 8 (15 April 2006), 589-594 ,

\bibitem{doubzel2}
B. Doubrov, I. Zelenko, \emph{ On local geometry of nonholonomic
rank 2 distributions}, Journal of London Mathematical Society,
80(3):545-566, 2009;

\bibitem{doubzel3}
B. Doubrov, I. Zelenko, \emph{ On local geometry of rank 3
distributions with 6-dimensional square}, submitted,
arXiv:0807.3267v1[math. DG], 40 pages.


\bibitem{dzpar} B. Doubrov, I. Zelenko, \emph{Geometry of curves in parabolic homogeneous spaces}, preprint, September 2011.

\bibitem{eastslov}
M. Eastwood, J. Slov\'{a}k, \emph{Preferred parameterisations on homogeneous curves}, Comment. Math. Univ. Carolin. 45 (2004), no. 4, 597--606.

\bibitem{olver1}
M.~Fels, P.J. ~Olver, \emph{Moving coframes. I. A practical algorithm}, Acta Appl. Math. 51 (1998) 161-213

\bibitem{olver2}
M.~Fels, P.J. ~Olver, \emph{Moving coframes II: Regularization and Theoretical Foundations}, Acta Appl. Math. 55 (1999) 127-208

\bibitem{Fulton} W. Fulton, J.~Harris, \emph{Representation theory: a first
course}, Springer--Verlag, NY, 1991.

\bibitem{gabriel}
P. Gabriel,
\emph{Unzerlegbare Darstellungen. I. (German. English summary)}
Manuscripta Math. 6 (1972), 71-103; correction, ibid. 6 (1972), 309.

\bibitem{gelf} I.M Gelfand, \emph{Lectrures on linear algebra}, New York, Interscience Publishers, 1961, 185 pages

\bibitem{green}
M.L. Green, \emph{The moving frame, differential invariants and rigidity theorem for curves in homogeneous spaces}, Duke Mathematics Journal 45 (1978),  735?779

\bibitem{griffiths}
P.A. Griffiths, \emph{On Cartan?s method of Lie groups and moving frames as applied to uniqueness and existence questions in differential geometry}, Duke Math. J. 41(1974), 775?814.

\bibitem{hump} J.E. Humphreys, \emph{Introduction to Lie Algebras and Representation Theory,} Third  printing, revised. Graduate Text in Mathematics, 9. Springer-Verlag, New York-Berlin, 1980, xii+171 pp

\bibitem{jacob} N. Jacobson, \emph{Lie algebras}, Intersci. Tracts in Pure and Appl. Math., 10, New=York--London: John Wiley and Sons, 1962,


\bibitem{slie} S. Lie, \emph{Theory der Transformationgruppen}, Bd. 3, Leipzig:Teubner, 1893

\bibitem{beffa}
G. Mari Beffa,
\emph{Poisson brackets associated to the conformal geometry of curves}, Trans. Amer. Math. Soc. 357 (2005), p. 2799-282

\bibitem{beffa1}
G. Mari Beffa,
\emph{On completely integrable geometric evolutions of curves of Lagrangian planes.}
[J] Proc. R. Soc. Edinb., Sect. A, Math. 137, No. 1, 111-131 (2007).

\bibitem{beffa2}
G. Mari Beffa,
\emph{Projective-type differential invariants and geometric curve evolutions of KdV-type in flat homogeneous manifolds}
Annales de l'institut Fourier, 58 no. 4 (2008), p. 1295-1335.

\bibitem{beffa3}
G. Mari Beffa, \emph{Moving frames, Geometric Poisson brackets and the KdV-Schwarzian evolution of pure spinors}, Annales de l'Institut Fourier, accepted for publication.

\bibitem{ovs} V. Ovsienko,
{\emph Lagrange schwarzian derivative and symplectic Sturm theory,}
Annales de la faculta des sciences de Toulouse S�. 6, 2 no. 1
(1993), p. 73-96


\bibitem{seashi1}
Y. Se-ashi, \emph{A geometric construction of Laguerre-Forsyth's
canonical forms of linear ordinary differential equations.}
Shiohama, K. (ed.), Progress in differential geometry. Tokyo:
Kinokuniya Company Ltd. Adv. Stud. Pure Math. 22, 265-297 (1993)

\bibitem{seashi2}
Y. Se-ashi, \emph{On differential invariants of integrable finite
type linear differential equations}, Hokkaido Math. J. 17 (1988),
151--195.

\bibitem{tan} N.~Tanaka, \emph{On differential systems, graded Lie
  algebras and pseudo-groups}, J.\ Math.\ Kyoto.\ Univ., \textbf{10}
  (1970), 1--82.
\bibitem{tan2} N. Tanaka, \emph{ On the equivalence problems associated with simple graded Lie
algebras}, Hokkaido Math. J.,{\bf 6} (1979), 23--84.

\bibitem{wil} E.J.~Wilczynski, \emph{Projective differential geometry of
curves and ruled surfaces}, Leipzig, Teubner, 1905.

\bibitem{vinb}
\'E. B. Vinberg, \emph{The Weyl group of a graded Lie algebra.} (Russian)  Izv. Akad. Nauk SSSR Ser. Mat.  40  (1976), no. 3, 488--526, 709.

\bibitem{vinberg2}
\'E. B. Vinberg,\emph{Classification of homogeneous nilpotent elements of a semisimple graded Lie algebra}, Sel. Math. Sov., v. 6, 1987, 15--35.

\bibitem{zelrank1} I. Zelenko, {\sl  Complete systems of invariants for rank 1 curves in
Lagrange Grassmannians}, Differential Geom. Application, Proc. Conf.
Prague, 2005, pp 365-379, Charles University, Prague

\bibitem{zelli1} I. Zelenko, C. Li, \emph{Parametrized curves in Lagrange Grassmannians}, C.R. Acad. Sci. Paris, Ser. I, Vol. 345, Issue 11, 647-652.

\bibitem{zelli2} I. Zelenko, C. Li, \emph{Differential geometry of curves in Lagrange Grassmannians with given Young diagram}, Differential Geometry and its Applications, 27 (2009), 723-742
\end{thebibliography}
\end{document}